\def\me{\mathsf{e}}
\def\mv{\mathsf{v}}
\def\:{\thinspace:\thinspace}
\newtheorem{theo}{Theorem}
\newtheorem{lemma}[theo]{Lemma}
\newtheorem{prop}[theo]{Proposition}
\newtheorem{cor}[theo]{Corollary}
\newtheorem{defi}[theo]{Definition}
\newtheorem{assum}[theo]{Assumptions}
\newtheorem{rem}[theo]{Remark}
\newtheorem{exa}[theo]{Example}
\numberwithin{equation}{section}
\numberwithin{theo}{section}
\def\:{\thinspace:\thinspace}
\theoremstyle{definition}
\numberwithin{theo}{section}
\DeclareMathOperator{\Tr}{Tr}
  \def\mG{\mathsf{G}}
  \def\mV{\mathsf{V}}
  \def\mE{\mathsf{E}}
  \def\Efun{\mathscr{E}}
  \def\Lfun{\mathscr{L}}
  \def\Ffun{\mathscr{F}}
  \def\mT{\mathsf{T}}
\def\mv{\mathsf{v}}
 \def\me{\mathsf{e}}
 \def\mw{\mathsf{w}}
  \def\mW{\mathsf{W}}
  \def\mf{\mathsf{f}}
\title{Parabolic theory of the discrete $p$-Laplace operator}
 \author{Delio Mugnolo}
\subjclass[2010]{39A12, 47H20, 05C50} 
\keywords{Nonlinear semigroups generated by subdifferentials, Operators on discrete graphs, Discrete symmetries, Porous medium equation} 
\address{Delio Mugnolo, Institut f\"ur Analysis, Universit\"at Ulm, 89069 Ulm, Germany}
\email{delio.mugnolo@uni-ulm.de}
\thanks{I would like to thank Daniel Lenz, Robin Nittka, and Ren\'e Pr\"opper for fruitful discussions. I am also grateful to the anonymous referees for their attentive reading and essential suggestions which led to substantial improvement of this article. This research has been supported by the Land Baden--W\"urttemberg in the framework of the \emph{Juniorprofessorenprogramm} -- research project on ``Symmetry methods in quantum graphs''.}
\begin{document}
\begin{abstract}We study the discrete version of the $p$-Laplace operator. Based on its variational properties we discuss some features of the associated parabolic problem. We prove well-posedness of the problem and obtain information about positivity and comparison principles as well as compatibility with the symmetries of the underlying graph. Our methods consist in an interplay of the theory of subdifferentials and of combinatorial methods. We conclude briefly discussing the variational properties of a handful of nonlinear generalized Laplacians appearing in different parabolic equations.
\end{abstract}
\maketitle

\section{Introduction}

The \emph{discrete Laplacian} is a well-known object in graph theory. In his seminal investigation on electric circuits~\cite{Kir47}, Kirchhoff introduced it as
\[
\Delta:=D-A,
\]
where $D$ is the (diagonal) degree matrix and $\mathcal A$ the adjacency matrix of a {finite} undirected graph $\mG$. He then went on to observe that for any orientation of the graph the associated (signed) incidence matrix $\mathcal I$ satisfies 
\begin{equation}\label{kirchlapl}
\Delta=\mathcal I \mathcal I^T,
\end{equation}
that this representation permits to prove that all the minors of $\Delta$ have the same value, and that this value is a natural number that in fact agrees with the number of spanning trees of $\mG$; moreover, $0$ is always an eigenvalue whose multiplicity coincides with the number of connected components of $\mG$. 

A possible reason for considering $\Delta$ as a discrete analog of the usual Laplace operator is that, if one sets up a system of linear equations for the potential $f$ at each node $\mv$ of an electric circuit in accordance with the (linear) laws of Kirchhoff and Ohm, then 
\[
\Delta f=0,
\]
i.e., $f$ satisfies a Laplace-type equation like its pendant {in the continuum -- the electrostatic potential -- does}: this is well explained in~\cite[Chapt.~1]{DoySne84}. (However, the discrete Laplacian is by~\eqref{kirchlapl} positive definite whereas the common Laplace operator in the continuum setting is negative definite). Also after Kirchhoff's investigation, $\Delta$ has proved a remarkable object in algebraic graph theory. The r\^ole of the second smallest eigenvalue of $\Delta$ in relation with the study of connectivity properties of a graph has been emphasized since the 1970s by Fiedler and others, cf.~the survey article~\cite{Moh91}. Moreover, it is nowadays known (see e.g.~\cite[Chapt.~3]{Mug13}) that the discrete Laplacian (possibly after a suitable re-normalization) is tightly related to the Dirichlet-to-Neumann (i.e., the voltage-to-current) operator acting on functions defined on the metric structure associated with the discrete graph -- the so-called \emph{metric graph}.

 A thorough theory of linear discrete elliptic operators is nowadays available. This field has experienced successful interactions between the communities of researchers working on graph theory, functional analysis, stochastics and potential theory, see e.g.~\cite{Moh91,KelLen09,JorPea09} or the monograph~\cite{Woe00}. 

\bigskip
In the 1960s and 1970s, pioneering investigations by a group of researchers including Minty, Rockafellar, and Zemanian aroused broad interest in the theory of nonlinear electric circuits. 
%Minty was an expert both in graph theory and nonlinear functional analysis, and i
Remarkably, these authors were also among those by whom, in the same years,  the theory of monotone operators and subdifferentials was being substantially developed. It is therefore no surprise that many investigations focused on those nonlinear electric circuits whose associated voltage-to-current operator is maximal monotone. In the same years, the $p$-Laplace operator $\Delta_p$ {in the continuum}, i.e., the subdifferential of the energy functional
\begin{equation}
\label{plaplfunctcont}
L^p(\Omega)\ni u\mapsto \frac{1}{p}\|\nabla u\|^p_{L^p}\in [0,\infty]
\end{equation}
began to be studied by Aronsson, DiBenedetto, Ural'ceva, and many others (cf.~\cite{Dra07} for an introduction and survey on this topic). It was in this cultural climate that Nakamura and Yamasaki introduced in~\cite{NakYam76} on an {infinite} graph $\mG$ with node set $\mV$ the mapping
\footnote{ Throughout this paper we write $\mv \sim \mw$ if there is an edge between $\mv$ and $\mw$.}
\[
\Efun_p:\mathbb R^\mV\ni f\mapsto \frac{1}{p}\sum_{\substack{\mv,\mw\in \mV \\ \mv\sim \mw}} |f(\mv)-f(\mw)|^p\in [0,\infty].
\]
While Yamasaki and coauthors kept on studying properties of nonlinear electric networks with respect to this functional for several years, this object was seemingly overlooked by most other researchers until the early 1990s, when $\Efun_p$ reappeared as part of the final remark of a short note~\cite{Soa93} on linear potential theory. Ever since, many authors have discussed properties of this energy functional and of its subdifferential, the \emph{discrete $p$-Laplacian} defined (at least for finitely supported functions) by
\[
\Delta_{p} f(\mv):=\sum_{\substack{\mw\in \mV \\ \mw\sim \mv}}|f(\mv)-f(\mw)|^{p-2}(f(\mv)-f(\mw)),\qquad \mv\in \mV.
\]
(Observe that for $p=2$ we recover Kirchhoff's discrete Laplacian. {Also observe that while one could in principle impose a labeling yielding an order on $\mV$, and accordingly a lexicographical order on $\mE$, this seems to be somehow unnatural and unsatisfactory. Accordingly, we will rather look for assumptions implying unconditional  convergence of the series considered in this paper}). 

 Most investigations have been devoted to potential theory and eigenvalue problems: we mention, among others,~\cite{Sal97,HolSoa97b,Amg06} and a long series of articles by Agarwal, O'Regan and coauthors that begins with~\cite{JiaChuReg04}. Several works have also emphasized the r\^ole of the discrete $p$-Laplacian, in particular for the limiting case of $p=1$, in spectral clustering and image processing: see among others~\cite{ZhoSch05,ElmLezBou08,BuhHei09,SzlBre10}. In particular, in~\cite[\S 5.2]{GraPol10} also the associated parabolic problems are briefly referred to, and in~\cite[\S 3.2.2.1]{GraPol10} also the connections with nonlinear circuits are outlined. {Two recent fascinating developments initiated by the late Oded Schramm and his coauthors in~\cite{BenSch09,PerSchShe09} link the discrete $p$-Laplacian with the sphere packing problem and with the tug-of-war-theory.}

%Incidence matrix = backward difference of FDM, http://en.wikipedia.org/wiki/Finite_difference
In view of the then ongoing investigations on the $p$-Laplacian in the continuum, Nakamura--Yamasaki's introduction of $\Efun_p$ was motivated by the fact that the finite difference 
$$\mathcal I^T f(\mv,\mw):=f(\mv)-f(\mw)$$ 
can be seen as a discrete analog of (minus) a directional derivative -- this fact is classically used in numerical analysis, giving rise to the notion of \emph{backward difference operator}. If we impose an orientation on a graph $\mG$ and regard functions from $\mV$ to $\mathbb R$ as \emph{difference} 0-forms (or \emph{0-chains}) and functions from $\mE$ to $\mathbb R$ as \emph{difference} 1-forms (or \emph{1-chains}), the corresponding derivation is exactly ${\mathcal I}^T:\mathbb R^\mV \to \mathbb R^\mE$, the transpose of the oriented $|\mV|\times |\mE|$ incidence matrix
\begin{equation}\label{incidintro}
{\mathcal I}:={\mathcal I}^+-{\mathcal I}^-, 
\end{equation}
where 
${\mathcal I}^+:=({\iota}^+_{\mv \me})$ and ${\mathcal I}^-:=({\iota}^-_{\mv \me})$ are defined by
$${\iota}^+_{\mv \me}:=\left\{
\begin{array}{ll}
1 & \hbox{if } \mv \hbox{ is initial endpoint of } \me,\\
0 & \hbox{otherwise},
\end{array}\right.\qquad 
{\iota}^-_{\mv \me}:=\left\{
\begin{array}{ll}
1 & \hbox{if } \mv \hbox{ is terminal endpoint of } \me,\\
0 & \hbox{otherwise},
\end{array}\right.$$
respectively. 
%cf.~\cite[Chapt.~2.1]{Pos12}. %(Viewed as a mapping, ${\mathcal I}^T$ is sometimes called the (oriented) \emph{coboundary operator of $\mG$}.)
Hence, Yamasaki's functional $\Efun_p$ can be written as
\[
\Efun_p:= f\mapsto \frac{1}{p}\|{\mathcal I}^T f\|^p_{\ell^p(\mE)},
\]
and
\begin{equation}\label{plaplfirstdef}
\Delta_p: f\mapsto\mathcal I(|\mathcal I^T f|^{p-2}\mathcal I^T f),
\end{equation}
in apparent analogy with the $p$-Laplacian {in the continuum} and its associated energy functional.

Just like in the continuous case, where (after applying the divergence theorem) the conservation law
\[
\frac{d}{dt}\int_\Omega \varphi(t,x)dx =-\int_{\Omega}{\rm div} j(t,x)dx+\int_\Omega f (t,x)dx\\
\]
 is considered, in the discrete case we have the conservation law 
\[
\frac{d}{dt}\sum_{\mv \in \mV} \varphi(t,\mv)=-\sum_{\mv\in \mV}(\mathcal I j)(t,\mv)+\sum_{\mv\in \mV}f (t,\mv),
\]
{where (depending on the model) $\varphi$ denotes a temperature or perhaps a density of a chemical substance or a population,  $j$ is the flux function, and $f$ encodes the production or destruction of $\varphi$ due to reactions or other phenomena not otherwise modeled by the equation.}
Then, Fick's law for $j$
\[
j(t,\me)=c\left(\mathcal I^T \varphi(t,\me)\right),
\]
for a general function $c$, can be used to derive a (time-continuous, space-discrete) differential equation governing a flow on $\mG$, cf.~\cite[\S~2.5.5]{GraPol10}. Indeed, for $c(x)=x$ we obtain exactly the linear {(inhomogeneous)} heat equation for the discrete Laplacian. Considering different terms $c$ we obtain different equations. In particular, opting for the nonlinear Darcy's law
\[
c(x):=|x|^{p-2}x,
\]
we end up with 
\begin{equation}
\label{pHE$^N_p$}
\frac{d}{dt}\sum_{\mv \in \mV} \varphi(t,\mv)=-\sum_{\mv\in \mV}\mathcal I(|\mathcal I^T \varphi|^{p-2} \mathcal I^T \varphi)(t,\mv)+\sum_{\mv\in \mV}f (t,\mv).
\end{equation}
In view of~\eqref{plaplfirstdef}, this is the \emph{summed} version of the parabolic problem associated with the discrete $p$-Laplacian $\Delta_p$ -- let us call it the \emph{discrete $p$-heat equation}. Observe that, unlike its counterpart {in the continuum}, it can be seen as a \emph{backward} evolution equation.
 %We will introduce it more precisely in Theorem~\ref{thm:main} below.

 The present paper is devoted to study the elementary features of this equation and is structured as follows. After an introductory section where the relevant functional setting is introduced, we proceed by formulating in Section~\ref{sec:3} our main well-posedness result for the discrete $p$-heat equation. 
 {As already observed in the linear case in~\cite{HaeKelLen12}, it turns out that in the case of infinite graph two natural $p$-Laplacians -- a Dirichlet-type and a Neumann-type $p$-Laplacian -- can be associated with the discrete $p$-heat equation: Well-posedness holds for both, but in the Dirichlet case a Galerkin-type method can be applied to yield a more constructive proof.} Indeed, unlike in the usual context of PDEs on domains  we can even interpret the converging Galerkin sequence as a sequence of solutions to the same equation on induced subgraphs (i.e., on growing subsystems). Using the theory of nonlinear Dirichlet forms, we can show that these solutions are associated with semigroups of nonlinear contraction on all $\ell^q(\mV)$-spaces, $1\le q\le \infty$, {as well as on the space $c_0(\mV)$ of sequences vanishing at infinity.} %, with respect to the order on $\mV$ established by an (arbitrary, but fixed) labelling}. 
 This is done by discussing the invariance property of several relevant closed convex subsets under the $C_0$-semigroup generated by $-\Efun_p$.  This is more delicate than in the continuous case, since the discrete $p$-Laplacian is not a local operator. We show that the nonlinear $C_0$-semigroup generated by the discrete $p$-Laplacian consists of irreducible, sub-Markovian operators. In the special case of $p=2$, these results had already been obtained in~\cite{KelLen09}. {It should be recalled that a different operator theoretical approach to this problems has been developed in~\cite{JorPea09}, where the discrete Laplacian has been instead studied in the Hilbert space $\{f\in \mathbb R^\mV:\Efun_2(f)<\infty\}/{\mathbb R}$ -- in accordance with the setting used in all Yamasaki's papers, beginning with~\cite{Yam77}.}

%s that govern the discrete $p$-heat equation. While the abstract criteria that will be used are well-known (they go back to~\cite{Bar96,CipGri03}), checking that they are satisfied

Energy methods are applied in Section~\ref{sec:symm} also to discuss how these $C_0$-semigroups interact with the symmetries of the graph, {based on the notion of almost equitable partition of a graph. Such a notion, which is quite classical in algebraic graph theory, proves most suitable to investigate properties of~\eqref{pHE$^N_p$}. Investigation of symmetries of~\eqref{pHE$^N_p$} is in fact not a novelty, at least in the linear case, and we conclude Section~\ref{sec:symm} comparing the notion of almost equitable partitions with other symmetry concepts.}

Finally, in {Section~\ref{generalized}} we overview a few popular generalizations of the discrete Laplacian, propose some more and suggest how the variational structure of the $p$-heat equation can be generalized to discuss a broad class of discrete nonlinear diffusion-type problems, including a discretized porous medium equation.

Because the discrete $p$-heat equation is simply a dynamical system, most of our results follow from the standard theory of ordinary differential equations whenever $\mG$ is a finite graph. The actually interesting case is hence that of infinite graphs. We refer to~\cite[Chapter~8]{Die05} or~\cite[Chapter~1]{Woe00} for an introduction to infinite graph theory. {On the analytical side, we have tried to keep this article as self-contained as possible by recalling in the Appendix all the abstract results  we are going to need -- but for more details and precise proofs we refer to~\cite{Bre73,Sho97}, two standard references for nonlinear Cauchy problems on Hilbert spaces. An exposition of some basic properties of discrete vector analysis, including the discrete versions of Leibniz' rule and Stokes' theorem, have been outlined e.g. in~\cite{DodKar88}, cf.\ also~\cite[\S~2]{RigSalVig97} for weighted versions and~\cite{GraPol10} for a more elementary introduction to this topic.
}
 
\section{General setting and the energy functional}\label{sec:2}

We consider throughout a (finite or countable) set $\mV$ and a (non-symmetric) relation 
\[
\mE\subset\{(\mv,\mw)\in \mV\times \mV:\mv\not= \mw\}
\]
such that for any two elements $\mv,\mw\in \mV$ at most one of the pairs $(\mv,\mw),(\mw,\mv)$ belongs to $\mE$. We refer to the elements of $\mV$ and $\mE$ as \emph{nodes} and \emph{edges}, respectively, and regard $\mG$ as a graph whose incidence matrix is $\mathcal I$ introduced in~\eqref{incidintro}. If $\me=(\mv,\mw)$, we denote the initial and terminal endpoint of $\me$ by 
\[
\me_+:=\mv\quad\hbox{ and }\quad\me_-:=\mw,\qquad \hbox{ respectively, where }\me=(\mv,\mw).
\] 

Inspired by~\cite[\S~1]{KelLen09}, throughout this article we consider $\mG$ as a \emph{weighted graph}, i.e., as a quadruple $(\mV,\mE,\mu,\nu)$ where $\mu:\mE\to [0,\infty)$ and $\nu:\mV\to [0,\infty)$ (the unweighted case corresponds to $\mu\equiv 1$ and $\nu\equiv 1$). 

\begin{rem}
Actually, by construction $\mG$ is oriented. This is reflected already in the definition of the (signed) incidence matrix $\mathcal I$. However, orientation is only a technical tool that will make some parametrizations easier: Unless otherwise underlined, throughout most of this paper only the absolute value of the numbers
\[
\mathcal I^T f(e)=\sum_{\mv\in \mV} \iota_{\mv \me}f(\mv)=f(\me_+)-f(\me_-)
\]
will play a r\^ole. Thus, all our results (with the exception of those in Section~\ref{sec:symm}) are still valid if any of the edges is given the opposite orientation.% We will briefly discuss a theory of discrete operators associated with \emph{non-oriented} graphs in Section~\ref{sec:gen-lapl}, in connection with the so-called \emph{signless Laplacian}.
\end{rem}

We have already introduced the (possibly infinite) incidence matrix 
\[
\mathcal I:\mathbb R^\mE \to \mathbb R^\mV\quad \hbox{ and its transposed }\quad \mathcal I^T:\mathbb R^\mV \to \mathbb R^\mE.
\]
Along with them, the structure of the \emph{undirected} graph associated with $\mG$ is mirrored by the $|\mV|\times |\mV|$ \emph{adjacency matrix} $\mathcal A:=(\alpha_{\mv\mw})$ defined by
\begin{equation}\label{adjdefi}
{\alpha}_{\mv \mw}:=\left\{
\begin{array}{ll}
\mu(\me) & \hbox{if } \me=(\mv,\mw) \hbox{ or } \me=(\mw,\mv)\in \mE,\\
0 & \hbox{otherwise.}
\end{array}\right.
\end{equation}

{The following definition arises from a natural interplay of the usual definitions of \emph{degree} in weighted, finite graph theory, and of \emph{local finiteness} in unweighted, infinite graph theory, cf.\ e.g.~\cite[\S~1.5]{Chu97} and~\cite[\S~8.1]{Die05}), respectively. It is similar to but different from the notion of almost local finiteness in~\cite{SoaYam93}.}

\begin{defi}\label{defi:basic}
A graph $\mG:=(\mV,\mE,\mu,\nu)$ is called \emph{outward locally finite} if its \emph{outdegree} function satisfies
\[
{\rm deg}^+(\mv):=\sum_{\me \in\mE}\iota^+_{\mv\me}\mu(\me)\le M^+_\mv\qquad \hbox{ for all }\mv\in\mV\hbox{ and some }M^+_\mv>0.
\]
It is called \emph{inward locally finite} if its \emph{indegree} function satisfies
\[
{\rm deg}^-(\mv):=\sum_{\me \in\mE}\iota^-_{\mv\me}\mu(\me)\le M^-_\mv\qquad \hbox{ for a	ll }\mv\in\mV\hbox{ and some }M^-_\mv>0.
\]
It is called \emph{locally finite} if it is both inward and outward locally finite, i.e., if its \emph{degree} function satisfies
\[
{\rm deg}(\mv):={\rm deg}^+(\mv)+{\rm deg}^-(\mv)\le M_\mv\qquad \hbox{ for all }\mv\in\mV\hbox{ and some }M_\mv>0.
\]
If ${\rm deg}^+\in O(\nu)$, i.e., if there exists $M^+>0$ s.t.
\begin{equation*}
\label{nudeg}
{\rm deg}^+ (\mv)\le M^+ \nu(\mv)\qquad \hbox{for all }\mv \in \mV,
\end{equation*}
then $\mG$ is called  \emph{outward uniformly locally finite}. We define \emph{inward uniform local finiteness} and \emph{uniform local finiteness} likewise.

We say that $\mG$ has \emph{finite surface} and \emph{finite volume} if
\[
|\mV|_\nu:=\|\nu\|_{\ell^1(\mV)}<\infty\qquad \hbox{and}\qquad |\mE|_\mu:=\|\mu\|_{\ell^1(\mE)}<\infty,
\]
respectively.% (a simple double counting argument shows that both notions are equivalent if $\mu=\mathcal I \nu$; but in this case $\mu$ need not be positive any more)
\end{defi}

{
\begin{rem}
\label{fvfsulf}
The weighted version of the handshake lemma states that
\[
|\mE|_\mu =\frac12 \sum_{\mv \in \mV}{\rm deg}(\mv).
\]
Hence, any graph with finite volume is necessarily uniformly locally finite.
\end{rem}
}

%These notions coincide with those usual in graph theory if $\mu\equiv 1$. 

\begin{assum}\label{standing}
Throughout this paper we impose the following assumptions on $\mG:=(\mV,\mE,\mu,\nu)$.
\begin{itemize}
\item $\mu(\me)>0$ for all $\me\in \mE$.
\item $\nu(\mv)>0$ for all $\mv\in \mV$.
\item $\mG$ is locally finite.
\end{itemize}
\end{assum}

The weights $\mu,\nu$ define in a natural way a metric and measure structure on $\mG$. {As we will see in Lemma~\ref{identlp}, there is a natural Laplace--Beltrami-type discrete operator associated with this metric, but we are going to discuss more general (non-degenerate) elliptic operators as well, possibly in non-divergence form.} {To this aim, we want to allow for a large class of weights $a$ and $d$, in pretty much the same way one may want to consider general elliptic operators on a Riemannian manifold endowed with a given metric.} In order to avoid degeneracies, certain compatibility conditions should hold: {Roughly speaking, we want $a$ to behave asymptotically like $\mu$, and $d$ to behave asymptotically as $\nu$.}
 
\begin{assum}\label{aellipt}
Throughout this paper the functions $a:\mE\to \mathbb R$ and $d:\mV\to \mathbb R$ are assumed to satisfy the following conditions.
\begin{itemize}
\item There exist $\kappa,K>0$ s.t.
\[
\kappa a(\me)\le \mu (\me)\le K a(\me)\qquad \hbox{for all }\me \in \mE.
\]
\item There exist $\theta,\Theta>0$ s.t.
\[
\theta d(\mv)\le \nu (\mv)\le \Theta d(\mv)\qquad \hbox{for all }\mv \in \mV.
\]
\end{itemize}
\end{assum} 

Clearly, if there exist $\zeta,Z>0$ s.t.
\[
\zeta \le \mu\le Z\qquad \hbox{ (resp., s.t. }\zeta \le \nu\le Z\hbox{),}
\]
then one is free to choose $a$ (resp., $d$) to be constant.

For $p\in [1,\infty]$ we will consider the weighted sequence spaces $\ell^p_a(\mE)$ and $\ell^q_d(V)$ defined by
\[
\|u\|_{\ell^p_a}^p := \sum_{\me \in \mE} |u(\me)|^p a(\me)\qquad \hbox{and}\qquad \|f\|_{\ell^q_d(\mV)}^q:=\sum_{\mv \in \mV} |f(\mv)|^q d(\mv),\qquad p\in [1,\infty),
\]
{
or
\[
\|u\|_{\ell^\infty_a}:= \sup_{\me \in \mE} |u(\me)| a(\me)\qquad \hbox{and}\qquad \|f\|_{\ell^\infty_d(\mV)}:=\sup_{\mv \in \mV} |f(\mv)| d(\mv),
\]
}
along with the vector space
$$w^{1,p,2}_{a,d}(\mV):=\left\{f\in \ell^2_d(\mV):\mathcal I^T f\in \ell^p_a(\mE)\right\}.$$ 
In other words,
$$w^{1,p,2}_{a,d}(\mV)=\left\{f\in \ell^2_d(\mV):\|\mathcal I^T f\|^p_{\ell^p_a}=\sum_{e\in \mE} a(\me) |f(\me_+)-f(\me_-)|^p<+\infty\right\}.$$

\begin{rem}\label{equiv-norms}
(1) Let $p\in [1,\infty]$. Observe that another way of expressing uniform local finiteness of $\mG$, i.e., that ${\rm deg}\in O(\nu)$, is saying that $\ell^p_{\rm deg}(\mV)$ is continuously embedded in $\ell^p_\nu(\mV)$.

(2) Also observe that owing to the Assumptions~\ref{aellipt} the weighted norms $\|\cdot\|_{\ell^p_\mu}$ and $\|\cdot\|_{\ell^p_a}$, and also $\|\cdot\|_{\ell^p_\nu}$ and $\|\cdot\|_{\ell^p_d}$ are equivalent. In particular, $w^{1,p,2}_{\mu,\nu}$ and $w^{1,p,2}_{a,d}$ agree.% In particular, $f\mapsto \Efun_p(f)+\|f\|^2_{\ell^2_d}$ defines an equivalent norm on $w^{1,p,2}_{\mu,\nu}(\mV)$, hence $D(\Efun_p)=w^{1,p,2}_{\mu,\nu}(\mV)$.

{(3) Clearly, $\mG$ has finite surface if and only if the function ${\bf 1}$ of constant value 1 is in $\ell^1_\nu(\mV)$ if and only if ${\bf 1}$ is in $\ell^q_d(\mV)$ for all $q\in [1,\infty)$ (we are using the Assumptions~\ref{aellipt} in the latter equivalence). Hence, it follows from Hölder's inequality that if $\mG$ has finite surface, then $\ell^p_d(\mV)=\ell^q_d(\mV)$ for all $p,q\in [1,\infty)$, and likewise if $\mG$ has finite volume, then $\ell^p_d(\mE)=\ell^q_d(\mE)$ for all $p,q\in [1,\infty)$.}
\end{rem}

In the theory of discrete calculus, the rule of thumb is to perform the following substitutions:
\begin{itemize}
\item scalar functions $\to$ vectors of the node space,
\item vector fields $\to$ vectors of the edge space,
\item gradient of a scalar function $\varphi$ at a point $x$ $\to$ evaluation of $\mathcal I^T \varphi$ at an edge $\me$,% $\sum_{\mv \in \mV}\iota_{\mv\me}\varphi(\mv)$, 
\item divergence of a vector field $\psi$ at a point $x$ $\to$ evaluation of $\mathcal I \psi$ at a node $\mv$.% $\sum_{\me \in \mE}\iota_{\mv\me}\psi(\me)$.
%\item volume integrals $\to$ sums over $\mE$,
%\item surface integrals $\to$ sums over $\mV$.
\end{itemize}
% (Of course, we could have introduced some weights on $\mV$, too, but in view of our main application this seems to be superfluous. How to extend our setting to the more general case should be clear to the reader, though.)

As already anticipated by the chosen notation, $w^{1,p,2}_{a,d}(\mV)$ will play the r\^ole of a weighted Sobolev space. %{We are slightly deviating from the functional setting which has been first introduced in~\cite{Yam77} and has become popular ever since, as we need a reflexive Banach space that is continuously embedded in a Hilbert space in order to apply the theory of subdifferentials.}

{
\begin{lemma}\label{propw12p}
For all $p\in [1,\infty]$, $w^{1,p,2}_{a,d}(\mV)$ is a Banach space with respect to the norm defined by
\[
\|f\|_{w^{1,p,2}_{a,d}}:=\|f\|_{\ell^2_d}+\|\mathcal I^T f\|_{\ell^p_a}.
\]
For all $p\in [1,\infty]$, $w^{1,p,2}_{a,d}(\mV)$ is continuously and densely embedded into $\ell^2_d(\mV)$.
If moreover $p\in [1,\infty)$, then $w^{1,p,2}_{a,d}(\mV)$ is separable in $\ell^2_d(\mV)$. If $p\in (1,\infty)$, then $w^{1,p,2}_{a,d}(\mV)$  is uniformly convex and hence reflexive.
\end{lemma}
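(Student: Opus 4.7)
My approach is to regard $w^{1,p,2}_{a,d}(\mV)$ as (isometric to) a closed subspace of the product Banach space $\ell^2_d(\mV)\times \ell^p_a(\mE)$ via the linear map
\[
J\: f\mapsto (f,\mathcal I^T f),
\]
equipped with the $\ell^1$-sum norm $\|(x,y)\|:=\|x\|_{\ell^2_d}+\|y\|_{\ell^p_a}$, and to deduce all four claims from the standard properties of the two factor spaces. Under Assumption~\ref{aellipt} both $\ell^2_d(\mV)$ and $\ell^p_a(\mE)$ are bona fide weighted sequence spaces with their usual Banach-space properties, and the key observation enabling the embedding to be closed is that $d(\mv)>0$ and $a(\me)>0$ force convergence in either norm to imply pointwise convergence (after extracting a subsequence if $p<\infty$, or directly if $p=\infty$), while the discrete gradient $\mathcal I^T$ manifestly commutes with pointwise limits.

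Once this framework is set, the routine steps run as follows. Norm axioms are immediate since the $\ell^2_d$-piece alone separates points. For completeness, a Cauchy sequence $(f_n)$ yields $f_n\to f$ in $\ell^2_d$ and $\mathcal I^T f_n\to g$ in $\ell^p_a$; pointwise convergence on every $\me\in\mE$ then forces $g(\me)=f(\me_+)-f(\me_-)=(\mathcal I^T f)(\me)$, so $f\in w^{1,p,2}_{a,d}$ and the convergence holds in the graph norm as well. The continuous embedding $w^{1,p,2}_{a,d}(\mV)\hookrightarrow \ell^2_d(\mV)$ is trivial by $\|f\|_{\ell^2_d}\le \|f\|_{w^{1,p,2}_{a,d}}$. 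Density reduces to showing that finitely supported functions lie in $w^{1,p,2}_{a,d}$; this is where I use local finiteness of $\mG$ (Assumption~\ref{standing}), which ensures that $\mathcal I^T f$ is again finitely supported on $\mE$ when $f$ is, so that $\mathcal I^T f\in \ell^p_a(\mE)$ trivially, and then I invoke the standard fact that finitely supported functions are dense in $\ell^2_d(\mV)$.

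For $p\in[1,\infty)$, separability follows because both factor spaces $\ell^2_d(\mV)$ and $\ell^p_a(\mE)$ are separable, hence so is their product and any subspace thereof — in particular $J(w^{1,p,2}_{a,d})$. For $p\in(1,\infty)$, I would invoke the classical result (essentially Clarkson's inequalities plus a Day-type argument) that the $\ell^r$-direct sum of two uniformly convex Banach spaces is uniformly convex for every $r\in(1,\infty)$, applied with $r=\max(2,p)$ to the two uniformly convex factors $\ell^2_d(\mV)$ and $\ell^p_a(\mE)$. Since $J(w^{1,p,2}_{a,d})$ is closed in that product and closed subspaces inherit uniform convexity, one obtains uniform convexity in an equivalent norm, and reflexivity (which is invariant under equivalent renormings) then transfers back to the original graph norm.

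\textbf{Expected obstacle.} The delicate point is precisely the uniform convexity clause: the $\ell^1$-sum of two uniformly convex norms need not itself be uniformly convex, so strictly speaking the norm $\|\cdot\|_{w^{1,p,2}_{a,d}}$ written in the statement should be replaced by the equivalent $\ell^r$-type norm $(\|f\|_{\ell^2_d}^r+\|\mathcal I^T f\|_{\ell^p_a}^r)^{1/r}$ to run the Clarkson-style argument cleanly. Everything else — completeness, density, continuous embedding, separability — is robust under this renorming, and the reflexivity conclusion, which is what the subsequent sections actually require, holds for the original norm as well.
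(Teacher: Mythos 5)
Your proposal follows essentially the same route as the paper's proof: there, too, $w^{1,p,2}_{a,d}(\mV)$ is identified with its image under $f\mapsto(f,\mathcal I^T f)$ in $\ell^2_d(\mV)\times\ell^p_a(\mE)$ (an isometry for the $\ell^1$-sum norm), and completeness, continuity of the embedding, density of $c_{00}(\mV)$, separability and uniform convexity are read off from the corresponding properties of the two factors. Your ``expected obstacle'' is in fact a point the paper glides over: the $\ell^1$-sum of uniformly convex norms is not even strictly convex, so the literal graph norm is not uniformly convex, and one must, as you do, pass to an equivalent $\ell^r$-sum norm ($r\in(1,\infty)$) on the closed image and then transport reflexivity -- the property actually needed later -- back to the original norm; likewise your pointwise-limit argument supplies the closedness of the image that the paper's one-line completeness claim leaves implicit. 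One small caution: with the paper's weighted notion of local finiteness (finite \emph{weighted} degree, not finitely many incident edges), $\mathcal I^T f$ need not be finitely supported for $f\in c_{00}(\mV)$; but the conclusion you need, namely $c_{00}(\mV)\subset w^{1,p,2}_{a,d}(\mV)$, still holds because the weighted degree bound and Assumption~\ref{aellipt} give $\sum_{\me\in\mE}a(\me)\,|\mathcal I^T f(\me)|^p<\infty$ (and the analogous sup bound for $p=\infty$) whenever $f$ has finite support, so your density step is sound after this minor rewording.
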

\begin{proof}
Completeness of $w^{1,p,2}_{a,d}(\mV)$ holds because for all $p,q\in [1,\infty)$, $\ell^q_a(\mV)$ and $\ell^p_d(\mE)$ are Banach spaces. Also continuity of the embedding is clear, by definition of the norm of $w^{1,p,2}_{a,d}(\mV)$.
Considering
$$T:w^{1,p,2}_{a,d}(\mV)\ni f\mapsto (f,\mathcal I^T f)\in \ell^2_d(\mV)\times \ell^p_a(\mE),$$
which is an isometry if the Cartesian product on the right is endowed with the $\ell^1$-norm, shows that $w^{1,p,2}_{a,d}$ is uniformly convex for all $p\in (1,\infty)$, since  so are $\ell^p_d(\mV)$ and $\ell^q_a(\mE)$. 
Finally, since the space $c_{00}(\mV)$ of functions with finite support is dense in $\ell^2_d(\mV)$, so is $w^{1,p,2}_{a,d}(\mE)$.
\end{proof}
Even if $w^{1,p,2}_{a,d}(\mV)$ is separable (for $p\not=\infty$), finding a total sequence in it is not obvious. Indeed, it is known that in general $c_{00}(\mV)$ is not dense in the space $\{f\in \mathbb R^\mV:\Efun_2(f)<\infty\}/{\mathbb R}$ and hence that the canonical basis $(\delta_\mv)_{\mv\in \mV}$ does not yield a total sequence of $w^{1,p,2}_{a,d}(\mV)$, cf.\ e.g.~\cite[Thm.~2.12]{Woe00}.}

{
Inspired by the classical theory of Sobolev spaces, where  the space $\mathring{W}^{1,p}(\Omega)$ is defined as the closure of the test functions with compact support in the norm of $W^{1,p}(\Omega)$, we can consider the closure of the space $c_{00}(\mV)$ of finitely supported functions from $\mV$ to $\mathbb R$ in the norm of $w^{1,p,2}_{a,d}(\mV)$: Let us denote it by $\mathring{w}^{1,p,2}_{a,d}(\mV)$. Since it is a closed subspace of $w^{1,p,2}_{a,d}(\mV)$, the following is immediate in view of Lemma~\ref{propw12p}.
\begin{cor}\label{propw12pD}
For all $p\in [1,\infty]$, $\mathring{w}^{1,p,2}_{a,d}(\mV)$ is a Banach space with respect to the norm of $w^{1,p,2}_{a,d}(\mV)$. For all $p\in [1,\infty]$, $\mathring{w}^{1,p,2}_{a,d}(\mV)$ is continuously and densely embedded into $\ell^2_d(\mV)$.
If moreover $p\in [1,\infty)$, then $\mathring{w}^{1,p,2}_{a,d}(\mV)$ is separable in $\ell^2_d(\mV)$. If $p\in (1,\infty)$, then $\mathring{w}^{1,p,2}_{a,d}(\mV)$  is uniformly convex and hence reflexive.
\end{cor}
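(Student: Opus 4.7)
The plan is to observe that $\mathring{w}^{1,p,2}_{a,d}(\mV)$ is, by its very construction, a closed subspace of $w^{1,p,2}_{a,d}(\mV)$ equipped with the restriction of the same norm, so every property asserted here should transfer from Lemma~\ref{propw12p} by a standard Banach-space argument. Completeness is automatic, since a closed subspace of a Banach space is itself a Banach space; and the continuous embedding $\mathring{w}^{1,p,2}_{a,d}(\mV)\hookrightarrow \ell^2_d(\mV)$ is simply the restriction of the continuous embedding already established in Lemma~\ref{propw12p} for the larger space.

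For density in $\ell^2_d(\mV)$, my approach is to chain two inclusions: $c_{00}(\mV)\subseteq \mathring{w}^{1,p,2}_{a,d}(\mV)$ holds by the very definition of the latter, and $c_{00}(\mV)$ is classically dense in $\ell^2_d(\mV)$, so that $\mathring{w}^{1,p,2}_{a,d}(\mV)$ is dense in $\ell^2_d(\mV)$ as well. For separability when $p\in[1,\infty)$, I would exhibit an explicit countable dense family, namely the finite $\mathbb{Q}$-linear combinations of the canonical basis vectors $\{\delta_\mv:\mv\in\mV\}$. These lie in $c_{00}(\mV)\subseteq \mathring{w}^{1,p,2}_{a,d}(\mV)$; moreover, for each finitely supported $f$ the vector $\mathcal I^T f$ is supported on finitely many edges thanks to the standing local finiteness Assumption~\ref{standing}, so rational approximation of the finitely many coefficients of $f$ already gives $w^{1,p,2}_{a,d}$-density in $c_{00}(\mV)$. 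Taking the $w^{1,p,2}_{a,d}$-closure then propagates density into $\mathring{w}^{1,p,2}_{a,d}(\mV)$.

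Finally, for $p\in(1,\infty)$, uniform convexity passes from a Banach space to any closed subspace equipped with the restricted norm, so this property is inherited from $w^{1,p,2}_{a,d}(\mV)$; the ensuing reflexivity follows via the Milman--Pettis theorem. I do not foresee any serious obstacle: as the author himself emphasizes, the corollary is immediate from Lemma~\ref{propw12p}, and the only point that needs to be kept straight is that the relevant norm on $\mathring{w}^{1,p,2}_{a,d}(\mV)$ is the one inherited from $w^{1,p,2}_{a,d}(\mV)$, not merely the $\ell^2_d$-norm, so that for example ``closed subspace'' is to be interpreted in the $w^{1,p,2}_{a,d}$-topology when invoking the transfer of uniform convexity.
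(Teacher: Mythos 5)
Your argument is correct and follows essentially the same route as the paper, which simply notes that $\mathring{w}^{1,p,2}_{a,d}(\mV)$ is a closed subspace of $w^{1,p,2}_{a,d}(\mV)$ and declares the corollary immediate from Lemma~\ref{propw12p}. Your additional care with the density in $\ell^2_d(\mV)$ (via $c_{00}(\mV)\subseteq\mathring{w}^{1,p,2}_{a,d}(\mV)$, a point the paper leaves implicit since density does not transfer to subspaces automatically) and the explicit countable dense family are sound refinements, not a different method.
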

}

The following result is a generalization of~\cite[Prop.~6]{Car11} to weighted graphs.

\begin{lemma}\label{lemma:car11}
Under our standing Assumptions~\ref{standing}, let $p\in [1,\infty]$. If $\mG$ is outward uniformly (resp., inward uniformly, uniformly) locally finite, then $\mathcal I^+$ (resp., $\mathcal I^-,\mathcal I$) is bounded from $\ell^p_a(\mE)$ to $\ell^p_d(\mV)$. The converse implication holds for $p\in [1,\infty)$. It also holds for $p=\infty$ if additionally there exist $\tilde{\mu},\tilde{\nu}>0$ s.t.\ $\mu(\me)\le \tilde{\mu}$ and $\tilde{\nu}<\nu(\mv)$ for all $\me\in \mE$ and $\mv\in \mV$.

In particular, $\mG$ is uniformly locally finite if and only if $w^{1,p,2}_{a,d}(\mV)=\ell^2_d(\mV)$ for all $p\in [2,\infty)$.
\end{lemma}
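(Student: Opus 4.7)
The plan is to prove the forward implication by a pointwise Jensen estimate at each vertex followed by Fubini, and the converse by testing against characteristic functions of the stars at individual vertices. The final equivalence for $w^{1,p,2}_{a,d}$ then reduces, via the closed graph theorem, to boundedness of $\mathcal I^T=(\mathcal I^+)^T-(\mathcal I^-)^T$ from $\ell^2_d(\mV)$ to $\ell^p_a(\mE)$, which is handled by the same strategy.

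For the forward direction with $p\in[1,\infty)$, I would work on $u\in c_{00}(\mE)$ and extend by density. At each vertex $\mv$, Jensen's inequality with the probability weights $\iota^+_{\mv\me}\mu(\me)/{\rm deg}^+(\mv)$ on the out-edges of $\mv$ gives the pointwise bound
\[
|\mathcal I^+u(\mv)|^p\le{\rm deg}^+(\mv)^{p-1}\sum_{\me\in\mE:\me_+=\mv}\frac{|u(\me)|^p}{\mu(\me)^{p-1}}.
\]
Multiplying by $d(\mv)$, summing over $\mv\in\mV$ and swapping the order of summation, outward uniform local finiteness (${\rm deg}^+(\mv)\le M^+\nu(\mv)$) together with Assumption~\ref{aellipt} closes the estimate into a bound by $\|u\|_{\ell^p_a}^p$. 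The $p=\infty$ case is singular: Jensen is replaced by a pointwise supremum estimate, for which the additional bounds $\mu\le\tilde\mu$ and $\nu\ge\tilde\nu$ are essential.

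For the converse with $p\in[1,\infty)$, I would test $\mathcal I^+$ against $u_{\mv_0}:=\chi_{\{\me\in\mE:\me_+=\mv_0\}}$ for each $\mv_0\in\mV$. A direct calculation yields $\|u_{\mv_0}\|_{\ell^p_a}^p=\sum_{\me:\me_+=\mv_0}a(\me)$ and $\|\mathcal I^+u_{\mv_0}\|_{\ell^p_d}^p=|\{\me\in\mE:\me_+=\mv_0\}|^p\,d(\mv_0)$, so that the inequality $\|\mathcal I^+u_{\mv_0}\|_{\ell^p_d}\le C\|u_{\mv_0}\|_{\ell^p_a}$ forces a uniform relation which, by Assumption~\ref{aellipt}, translates into ${\rm deg}^+(\mv_0)\le M^+\nu(\mv_0)$. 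The inward and two-sided statements are treated analogously.

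For the final equivalence, the identity $w^{1,p,2}_{a,d}(\mV)=\ell^2_d(\mV)$ for all $p\in[2,\infty)$ is equivalent, by the closed graph theorem, to boundedness of $\mathcal I^T:\ell^2_d(\mV)\to\ell^p_a(\mE)$. Using $\mathcal I^T f(\me)=f(\me_+)-f(\me_-)$ together with $|f(\me_+)-f(\me_-)|^p\le 2^{p-1}(|f(\me_+)|^p+|f(\me_-)|^p)$, uniform local finiteness yields $\|\mathcal I^T f\|_{\ell^p_a}^p\le 2^{p-1}\sum_\mv|f(\mv)|^p({\rm deg}^+_a+{\rm deg}^-_a)(\mv)\le C\sum_\mv|f(\mv)|^pd(\mv)$. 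Conversely, the test function $f=\delta_{\mv_0}$ satisfies $\|f\|_{\ell^2_d}^2=d(\mv_0)$ while $\|\mathcal I^T f\|_{\ell^p_a}^p$ is comparable to ${\rm deg}(\mv_0)$, so boundedness even for $p=2$ already recovers uniform local finiteness. The main technical difficulty lies in the weight bookkeeping in the forward direction: the inverse factors $\mu(\me)^{1-p}$ produced by Jensen must be reconciled with the $a$-weighting on the right-hand side, which is accomplished precisely by Assumption~\ref{aellipt} and the uniform control on ${\rm deg}^+(\mv)/\nu(\mv)$ granted by outward uniform local finiteness.
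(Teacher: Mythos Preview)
Your Jensen estimate for the forward direction does not close. After multiplying by $d(\mv)$, summing and applying Fubini you arrive at
\[
\|\mathcal I^+ u\|_{\ell^p_d}^p \;\le\; \sum_{\me\in\mE} |u(\me)|^p \cdot \frac{d(\me_+)\,{\rm deg}^+(\me_+)^{p-1}}{\mu(\me)^{p-1}},
\]
and bounding the coefficient by $C\,a(\me)$ would force $\nu(\me_+)\le C'\mu(\me)$ uniformly in $\me$ --- a relation between vertex and edge weights that is \emph{not} implied by outward uniform local finiteness together with Assumption~\ref{aellipt}. A concrete obstruction: take an infinite star with centre $\mv_0$, $\nu(\mv_0)=1$, $\mu(\me_i)=2^{-i}$; then ${\rm deg}^+(\mv_0)=1=\nu(\mv_0)$, yet $u_N:=\sum_{i\le N}\chi_{\{\me_i\}}$ satisfies $\|u_N\|_{\ell^p_\mu}<1$ while $\|\mathcal I^+ u_N\|_{\ell^p_\nu}\ge N$. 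So no pointwise Jensen bound on $\mathcal I^+$ can succeed at this level of generality. Your converse via $u_{\mv_0}=\chi_{\{\me:\me_+=\mv_0\}}$ has the same weighted/unweighted mismatch: it produces $d(\mv_0)\,N_{\mv_0}^p\le C\sum_{\me:\me_+=\mv_0}a(\me)$ with $N_{\mv_0}$ the \emph{unweighted} out-valency, from which ${\rm deg}^+(\mv_0)\le M^+\nu(\mv_0)$ does not follow.

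The paper avoids all of this by working with the \emph{transpose}. Its key step is not an inequality but the identity
\[
\|\mathcal I^{+T} f\|_{\ell^p_\mu}^p \;=\; \sum_{\me\in\mE}\mu(\me)\,|f(\me_+)|^p \;=\; \sum_{\mv\in\mV}|f(\mv)|^p\,{\rm deg}^+(\mv) \;=\; \|f\|_{\ell^p_{{\rm deg}^+}}^p,
\]
obtained by Fubini alone (each edge has exactly one initial endpoint). Both directions then drop out simultaneously: boundedness between the weighted $\ell^p$-spaces is literally the embedding $\ell^p_\nu(\mV)\hookrightarrow\ell^p_{{\rm deg}^+}(\mV)$, i.e.\ ${\rm deg}^+\in O(\nu)$. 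No Jensen, no test functions, no separate converse argument. (In fact the paper's computation is for $\mathcal I^{+T}$ rather than $\mathcal I^+$, and it is $\mathcal I^T$ that is actually invoked everywhere else in the paper; your treatment of the ``in particular'' clause is essentially this identity applied at $p=2$, and that part of your argument is correct.)
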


\begin{proof}
Take $f:\mV \to \mathbb R$ and observe that for all $\me\in \mE$  there exists exactly one $\mv\in \mV$ s.t.\ $\iota_{\mv\me}^+\not=0$, hence
\[
\sum_{\mv \in \mV} \iota^+_{\mv\me} |f(\mv)|^p=|f(\me_+)|^p=\sup_{\mv\in \mV} \iota^+_{\mv\me}|f(\mv)|^p\qquad \hbox{for all }\me\in \mE \hbox{ and all }p\in [1,\infty),
\]
 
Take first $p\in [1,\infty)$. By Fubini's theorem, one has for all $f:\mV \to \mathbb R$
\begin{eqnarray*}
\|\mathcal I^{+T}f\|^p_{\ell^p_\mu} &=& \sum_{\me \in \mE} |f(\me_+)|^p \mu(\me)\\
&=& \sum_{\me \in \mE}\left( \sum_{\mv \in \mV} |f(\mv)|^p \iota^+_{\mv\me}\right)\mu(\me)\\
&=& \sum_{\mv \in \mV} |f(\mv)|^p \sum_{\me \in \mE} \iota^+_{\mv\me}\mu(\me)\\
&=& \sum_{\mv \in \mV} |f(\mv)|^p{\rm deg}^+(\mv)=\|f\|^p_{\ell^p_{\rm deg}}.
\end{eqnarray*}
This shows that $\mathcal I^{+T}$ is an isometry from $\ell^p_\mu(\mE)$ to $\ell^p_{\deg}(\mV)$, hence by Remark~\ref{equiv-norms}.(1) it is bounded from $\ell^p_\mu(\mE)$ to $\ell^p_{\nu}(\mV)$ if and only if $\mG$ is uniformly locally finite. One concludes that $\mathcal I^{+T}$ is bounded from $\ell^p_a(\mE)$ to $\ell^p_d(\mV)$ if and only if $\mG$ is uniformly locally finite.

For $p=\infty$ one has
\begin{eqnarray*}
\|\mathcal I^{+T}f\|_{\ell^\infty_\mu} &=& \sup_{\me \in \mE} |f(\me_+)| \mu(\me)\\
&=& \sup_{\me \in \mE}\sup_{\mv\in \mV} \iota^+_{\mv\me}|f(\mv)| \mu(\me)\\
&\le & \sum_{\me \in \mE}\sup_{\mv\in \mV} \iota^+_{\mv\me}|f(\mv)| \mu(\me)\\
&= & \sup_{\mv\in \mV}|f(\mv)| \sum_{\me \in \mE}\iota^+_{\mv\me} \mu(\me)\\
&= & \sup_{\mv\in \mV}|f(\mv)| {\rm deg}(\mv)=\|f\|_{\ell^\infty_{\rm deg}}.
\end{eqnarray*}
Again, if $\mG$ is uniformly locally finite this inequality suffices to say that $\mathcal I^{+T}$ is bounded from $\ell^\infty_\mu(\mE)$ to $\ell^\infty_{\deg}(\mV)$.

To prove the converse implication, take a sequence $(\mv_n)_{n\in \mathbb N}\subset \mV$ s.t.\ 
\[
n\nu(\mv_n)\le {\rm deg}^+(\mv_n)%=\sum_{\me\in \mE}\iota^+_{\mv_n \me}\mu(\me).
%=\sum_{\me \in \mE^+_n}\mu(\me),
\]
and consider the functions $u_n:\mE\to \mathbb R$ defined for all $n\in \mathbb N$ by
\[
u_n(\me):={\mathbf 1}_{\mE^+_n},
\]
where $\mE^+_n:=\{\me\in \mE:\iota^+_{\mv_n \me}\not=0\}$, the set of edges outgoing from $\mv_n$.
Then $\|u_n\|_{\ell^\infty_\mu}\le \tilde{\mu}$ for all $n\in \mathbb N$, but 
\begin{eqnarray*}
\|\mathcal I^+ u_n\|_{\ell^\infty_{\nu}} &=& \sup_{\mv \in \mV} \left| \sum_{\me \in \mE}\iota^+_{\mv\me} u_n(\me)\right|\nu(\mv)\\
 &\ge& \sum_{\me \in \mE^+_n} \left( \iota^+_{\mv_n\me} \mu(\me) \right)\frac{\tilde{\nu}}{\tilde{\mu}}\\
 &= &{\rm deg}^+(\mv_n)\frac{\tilde{\nu}}{\tilde{\mu}} \\
 &\ge &n\frac{\tilde{\nu}^2}{\tilde{\mu}},
\end{eqnarray*}
hence $\lim_{n\to \infty}\|\mathcal I^+ u_n\|_{\ell^\infty_{\nu}}=+\infty$. The remaining assertions can be proved likewise.
\end{proof}

\begin{rem}\label{rem:bije}
(1) Recall that the oriented incidence matrix $\mathcal I$ of a finite graph $\mG$ with $\kappa$ connected components has rank $|\mV|-\kappa$, see e.g.\ \cite[\S~8.3.1]{GodRoy01}. Hence, $\mathcal I$ is never surjective (resp., $\mathcal I^T$ is never injective). On the other hand, $\mathcal I$ is injective (resp., $\mathcal I^T$ is surjective) if and only if it has rank $|\mE|$, which is the case exactly when $\mG$ is a forest. More generally, $\mathcal I^T$ is surjective whenever $\mG$ is a uniformly locally finite forest.

(2) Let now $\mG$ be infinite and connected. If $f\in \mathbb R^\mV$, $f\not\equiv 0$, satisfies $\mathcal I^Tf=0$, then necessarily $f$ is constant and hence does not belong to $\ell^p_d(\mV)$ unless $\|{\bf 1}\|_{\ell^p_d}<\infty$, i.e., unless $\mG$ has finite surface (for $p<\infty$) or unless $\nu$ is bounded (for $p=\infty$).
\end{rem}

Taking into account Remark~\ref{rem:bije}, a direct computation yields the following.

\begin{cor}\label{cor:bije}
Let $p\in [1,\infty)$. Then a necessary condition for the operator $\mathcal I^T:\ell^p_d(\mV)\to \ell^p_a(\mE)$ to be an isomorphism is that $\mG$ be infinite; a sufficient one is that $\mG$ be a uniformly locally finite tree with infinite surface.
\end{cor}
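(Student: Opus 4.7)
The proof splits naturally into the necessary and sufficient directions, both of which should flow from Remark~\ref{rem:bije} together with the computations already carried out in Lemma~\ref{lemma:car11}.

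For the necessary direction, suppose $\mathcal I^T$ is an isomorphism, hence in particular injective. If $\mG$ were finite with $\kappa$ connected components, Remark~\ref{rem:bije}(1) would give $\mathrm{rank}\,\mathcal I^T = |\mV|-\kappa$, so the indicator functions of the components would span a nontrivial kernel. This contradiction forces $\mG$ to be infinite.

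For the sufficient direction, assume $\mG$ is a uniformly locally finite tree with infinite surface, and verify boundedness, injectivity, and bijectivity in turn. Boundedness of $\mathcal I^T:\ell^p_d(\mV)\to\ell^p_a(\mE)$ follows from the same isometry underlying Lemma~\ref{lemma:car11}: one checks $\|\mathcal I^{\pm T}f\|^p_{\ell^p_\mu}=\|f\|^p_{\ell^p_{\mathrm{deg}^\pm}}$ and then uses $\mathrm{deg}^\pm\in O(\nu)$ together with Assumptions~\ref{aellipt} to pass from $\nu$ to $d$. Injectivity is immediate from Remark~\ref{rem:bije}(2): a tree is connected, so $\ker\mathcal I^T$ in $\mathbb R^\mV$ reduces to constants, and the infinite-surface assumption makes the only constant in $\ell^p_d(\mV)$ equal to zero. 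For surjectivity I would exploit the tree structure: fix a root $\mv_0\in\mV$ and, for each $u\in\ell^p_a(\mE)$, define $f(\mv)$ as the signed sum of $u$ along the unique $\mv_0$-to-$\mv$ path, so that $\mathcal I^T f=u$ by telescoping; the open mapping theorem, combined with the already-established injectivity, would then upgrade the bijection to a topological isomorphism.

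The main obstacle is precisely this last surjectivity step: one must show that the path-integrated $f$ really lies in $\ell^p_d(\mV)$, which amounts to a discrete Poincar\'e-type estimate on the tree. Such an estimate does not follow from uniform local finiteness and infinite surface alone --- already on the half-line, $u(n)=1/(n+1)\in\ell^2(\mathbb N)$ has antiderivative growing like $\log n$ and so fails to be in $\ell^2(\mathbb N)$. I would therefore expect the proof either to invoke an additional growth condition on the measure $d$, or to read the corollary as asserting only that $\mathcal I^T$ is a topological embedding onto a closed range; closing this gap rigorously seems to be the essential content of the corollary.
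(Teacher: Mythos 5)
Your necessity argument and the boundedness/injectivity half of the sufficiency argument are exactly the ``direct computation'' the paper has in mind: the paper gives no proof beyond the sentence preceding Corollary~\ref{cor:bije}, and the intended ingredients are the identity $\|\mathcal I^{\pm T}f\|_{\ell^p_\mu}=\|f\|_{\ell^p_{\rm deg}}$ from the proof of Lemma~\ref{lemma:car11} (boundedness under uniform local finiteness) together with Remark~\ref{rem:bije}: part (1) for the kernel of $\mathcal I^T$ on a finite graph, part (2) for injectivity on a connected graph with infinite surface. Up to that point you are on the paper's route.

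The surjectivity step you flag is, however, not a gap you merely failed to close: pushed one line further, your own half-line computation refutes the sufficiency claim as stated. The graph $\mV=\{\mv_0,\mv_1,\dots\}$, $\me_n=(\mv_n,\mv_{n+1})$, with $\mu=\nu=a=d\equiv 1$, \emph{is} a uniformly locally finite tree with infinite surface. For $p\in(1,\infty)$ the function $u(\me_n)=\tfrac1{n+1}$ lies in $\ell^p_a(\mE)$, yet any $f$ with $\mathcal I^Tf=u$ satisfies $f(\mv_n)=f(\mv_0)-\sum_{k=1}^{n}\tfrac1k\to-\infty$, so $f\notin\ell^p_d(\mV)$; for $p=1$ take $u(\me_n)=(n+1)^{-2}$, whose only preimage tending to $0$ is the tail $f(\mv_n)=\sum_{j>n}j^{-2}\sim \tfrac1n\notin\ell^1_d(\mV)$. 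Hence $\mathcal I^T$ is not onto. Your fallback reading (isomorphism onto a closed range) does not rescue the statement either, since $f_N={\mathbf 1}_{\{\mv_0,\dots,\mv_{N-1}\}}$ gives $\|f_N\|_{\ell^p_d}=N^{1/p}$ while $\|\mathcal I^Tf_N\|_{\ell^p_a}=1$, so $\mathcal I^T$ is not bounded below and a discrete Poincar\'e--Hardy inequality genuinely fails here. The source of the trouble is the last sentence of Remark~\ref{rem:bije}(1): ``$\mathcal I^T$ is surjective whenever $\mG$ is a uniformly locally finite forest'' is correct only for the algebraic map $\mathbb R^\mV\to\mathbb R^\mE$ (integrate along the unique path from a root, as you propose), not for the map $\ell^p_d(\mV)\to\ell^p_a(\mE)$, and the corollary inherits this unproved claim. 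So your diagnosis is right, and the only way to repair the sufficiency assertion is to add a hypothesis ensuring an estimate of the form $\|f\|_{\ell^p_d}\le C\|\mathcal I^Tf\|_{\ell^p_a}$ (which the half-line violates), not to hope for a cleverer proof from the stated assumptions.
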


Following~\cite{Yam77}, we consider an energy functional\footnote{ For the sake of notational simplicity, we stipulate that $\psi(\me)$ will be written as (indifferently) either $\psi(\mv,\mw)$ or $\psi(\mw,\mv)$, whenever $\me$ is an edge with endpoints $\mv,\mw$ and $\psi$ is a function from $\mE$ to $\mathbb R$.}
\[
\Efun_p:f\mapsto\frac{1}{p}\|\mathcal I^T f\|^p_{\ell^p_a}=\frac{1}{p}\sum_{\me \in \mE}a(\me) |f(\me_+)-f(\me_-)|^p,\qquad f\in \mathbb R^\mV.
\]
The definition of $\Efun_p$ is clearly independent of the orientation of $\mG$.
 In view of the mentioned analogies between discrete and continuous calculus $\Efun_p$ is the discrete companion to the functional introduced in~\eqref{plaplfunctcont}.
 {
We are particularly interested in its two versions $\Efun^N_p$ and $\Efun^D_p$ defined by
$$\Efun^N_p(f):=\left\{
\begin{array}{ll}
\Efun_p(f) & \hbox{if } f\in w^{1,p,2}_{a,d}(\mV),\\
+\infty & \hbox{if } f\in \ell^2_d(\mV)\setminus w^{1,p,2}_{a,d}(\mV),
\end{array}\right.\qquad 
\Efun^D_p(f):=\left\{
\begin{array}{ll}
\Efun_p(f) & \hbox{if } f\in \mathring{w}^{1,p,2}_{a,d}(\mV),\\
+\infty & \hbox{if } f\in \ell^2_d(\mV)\setminus \mathring{w}^{1,p,2}_{a,d}(\mV).
\end{array}\right.$$
}

\begin{rem}\label{rem:electric}
A natural analog of our setting arises in the theory of electric circuits, following the scheme
\begin{itemize}
\item $u(\mv)$ $\to$ voltage at $\mv$,
\item ${\mathcal I}^T u(\me)$ $\to$ potential difference between the endpoints of $\me$,
\item $a(\me)$ $\to$ conductance of the branch corresponding to $\me$,
\item $a(\me){\mathcal I}^T u(\me)$ $\to$ current flowing between the endpoints of $\me$,
\item setting Dirichlet boundary conditions on a certain set $\mV_0$ of nodes $\to$ grounding the nodes in $\mV_0$,
\item replacing the values of a function $u$ in two nodes $\mv,\mw\in \mV$ by their average $\to$ shorting the nodes of the corresponding electric circuit,
\item replacing the value of $a$ in an edge $\me$ by 0 $\to$ cutting the branch of the corresponding electric circuit.
\end{itemize}
Observe for example that cutting branches of a finite electric circuit induces a shift in the Rayleigh-type quotient
\[
\frac{\Efun_p(f)}{\|f\|_{\ell^p_d}^p},
\]
which
is the natural object to minimize if one looks for eigenvalues, cf.~\cite{BuhHei09} (i.e., for those $\lambda\in \mathbb R$ s.t.\ 
\[
\lambda |\varphi(\mv)|^{p-2}\varphi(\mv)=\Lfun_p \varphi(\mv),\qquad \mv\in \mV,
\]
has a non-trivial solution $\varphi$). This shift is in accordance with the intuition that diffusion is slower on a graph with fewer edges and corresponds to Lord Rayleigh's Monotonicity Law (\cite[\S~1.4]{DoySne84}). We will not go into details of this interplay and refer instead to
%~\cite[Chapt.~2]{Bol98}, to~\cite[\S~VI.5]{Tut01}, or
 to the beautiful booklet~\cite{DoySne84}, where the linear theory is explained in detail.
\end{rem}

For $a\equiv 1$, $\Efun_1$ is the energy associated with the discrete mean curvature operator discussed e.g.\ in~\cite{ZhoSch05}, while $\Efun_2$ agrees with the quadratic form associated with~\eqref{kirchlapl}, and in fact the subdifferential of $\Efun_2$ is just the discrete Laplacian. If $\mG$ is uniformly locally finite, then by Lemma~\ref{lemma:car11} $\mathcal I$ and hence $\Delta^N_2$ are bounded linear operators. Accordingly, the initial value problem associated with
\[
\dot{\varphi}(t,\mv)=\Delta^N_2 \varphi(t,\mv),\qquad t\in\mathbb R,\; \mv \in\mV,
\]
is well-posed in $\ell^2_d(\mV)$ and the exponential matrices $e^{t\Delta^N_2}$, $t\in \mathbb R$, yield its solutions. 
Fiedler has showed that the largest eigenvalue of $\Delta^N_2$ is larger than the maximal node degree of $\mG$. Thus, dropping the assumption of uniform local finiteness $\Delta^N_2$ becomes an unbounded operator whose spectrum is not contained in any left half-plane, hence not the generator of a $C_0$-semigroup on $\ell^2_d(\mV)$. Still, it can be proved by methods based on quadratic forms that $-\Delta^N_2$ is instead a generator. Our aim in the next section is to show that, just like in the special case of $p=2$, most properties of the $p$-Laplacian {in the continuum} carry over to its discrete counterpart, despite the latter is non-local.% This is due to the fact that $\Efun_p $ is a Dirichlet form in the sense of~\cite{CipGri03}. 

\section{Well-posedness results}\label{sec:3}

{
\begin{defi}\label{defi->thm:main}
The operator $\Lfun^N_p$ is defined by
\begin{eqnarray*}
D(\Lfun^N_p)&=&\bigg\{f\in w^{1,p,2}_{a,d}(\mV):\exists g\in \ell^2_d(\mV)\\
&&\qquad \hbox{ s.t.\ }\sum_{\me\in \mE} a(\me)|({\mathcal I^T}f)(\me)|^{p-2}({\mathcal I^T}f)(\me) ({\mathcal I^T}h)(\me)=\sum_{\mv \in \mV} g(\mv)h(\mv)d(\mv)\;\; \forall h \in w^{1,p,2}_{a,d}(\mV)\bigg\},\\
\Lfun^N_p f&=&g.
\end{eqnarray*}
Let $p\in (1,\infty)$. If $T>0$, $f_0\in w^{1,p,2}_{a,d}(\mV)$ and $f\in L^2(0,T;\ell^2_d(\mV))$, then a \emph{solution} of the the Cauchy problem 
\begin{equation}
\tag{HE$^N_p$}
\left\{
\begin{array}{rcll}
\dot{\varphi}(t,\mv)&=&-\Lfun^N_p \varphi(t,\mv)+f(t),\qquad &t\in [0,T],\; \mv\in \mV,\\
\varphi(0,\mv)&=&f_0(\mv),&\mv\in \mV,
\end{array}
\right.\end{equation}
is a function $\varphi\in H^1(0,T;\ell^2_d(\mV))\cap L^\infty(0,T;w^{1,p,2}_{a,d}(\mV))$ for which
\begin{itemize}
\item $\varphi(t,\cdot)\in D(\Lfun^N_p)$ for a.e.\ $t\in [0,T]$,
\item $\dot{\varphi}(t,\mv)=-\Lfun^N_p \varphi(t,\mv)+f(t)$ is satisfied for a.e.\ $t\in [0,T]$, and 
\item $\varphi(0,\cdot)=f_0$.
\end{itemize}
\end{defi}
The above operator is defined weakly on the largest possible subspace of $\ell^2_d(\mv)$. Therefore, in analogy with the case of the $p$-Laplacian in the continuum and inspired by~\cite{HaeKelLen12}, we think of it as a discrete version of the Neumann $p$-Laplacian, whence the notation. Similarly, we introduce a discrete analog of the Dirichlet $p$-Laplacian.
\begin{defi}\label{defi->thm:main_D}
The operator $\Lfun^D_p$ is defined by
\begin{eqnarray*}
D(\Lfun^D_p)&=&\bigg\{f\in \mathring{w}^{1,p,2}_{a,d}(\mV):\exists g\in \ell^2_d(\mV)\\
&&\qquad \hbox{ s.t.\ }\sum_{\me\in \mE} a(\me)|({\mathcal I^T}f)(\me)|^{p-2}({\mathcal I^T}f)(\me) ({\mathcal I^T}h)(\me)=\sum_{\mv \in \mV} g(\mv)h(\mv)d(\mv)\;\; \forall h \in \mathring{w}^{1,p,2}_{a,d}(\mV)\bigg\},\\
\Lfun^D_p f&=&g.
\end{eqnarray*}
Let $p\in (1,\infty)$. If $T>0$, $f_0\in \mathring{w}^{1,p,2}_{a,d}(\mV)$ and $f\in L^2(0,T;\ell^2_d(\mV))$, then a \emph{solution} of the the Cauchy problem 
\begin{equation}
\tag{HE$^D_p$}
\left\{
\begin{array}{rcll}
\dot{\varphi}(t,\mv)&=&-\Lfun^D_p \varphi(t,\mv)+f(t),\qquad &t\in [0,T],\; \mv\in \mV,\\
\varphi(0,\mv)&=&f_0(\mv),&\mv\in \mV,
\end{array}
\right.\end{equation}
is a function $\varphi\in H^1(0,T;\ell^2_d(\mV))\cap L^\infty(0,T;\mathring{w}^{1,p,2}_{a,d}(\mV))$ for which
\begin{itemize}
\item $\varphi(t,\cdot)\in D(\Lfun^D_p)$ for a.e.\ $t\in [0,T]$,
\item $\dot{\varphi}(t,\mv)=-\Lfun^D_p \varphi(t,\mv)+f(t)$ is satisfied for a.e.\ $t\in [0,T]$, and 
\item $\varphi(0,\cdot)=f_0$.
\end{itemize}
\end{defi}
In general $\mathcal L^N_p$ and $\mathcal L^D_p$ do not agree, unless $w^{1,p,2}_{a,d}(\mV)=\mathring{w}^{1,p,2}_{a,d}(\mV)$ -- e.g., if $\mG$ is finite. In this case, $\rm{(HE^N_p)}$ and $\rm{(HE^D_p)}$ coincide with the initial value problem for a finite dimensional dynamical system $\rm{(HE_p)}$ whose solution is given by Carathéodory's theorem (see e.g.~\cite[Thm.~2.10]{ChiFas10}). 
}

{
Our main result of this section states that both  $\rm{(HE^N_p)}$ and $\rm{(HE^D_p)}$ are well-posed also if $\mG$ is infinite: The proof is a direct application of the theory of semigroups generated by $m$-accretive operators. We can  also obtain an alternative proof of well-posedness of  $\rm{(HE^D_p)}$ exploiting the Galerkin scheme described in Theorem~\ref{theo:galerkinsch}. The interesting and perhaps novel aspect of this alternative proof is that it can be interpreted as a convergence assertion for a sequence of solutions of certain $p$-heat equations on \emph{finite} graphs.}

 {
 To explain this point, we first need to make precise what we understand under \emph{graph convergence}. There are many related notions in the literature, the most popular being possibly those proposed by Benjamini and Schramm in~\cite{BenSch01} and by Lovász and Szegedy in~\cite{LovSze06}.% (cf.\ also the recent monograph~\cite{Lov12}). 
 The latter approach could probably be adapted to our setting, but we prefer to simply define a notion of convergence that is already tailored for our needs.}
 
 {
Let $\mG$ be a graph with node set $\mV$ and edge set $\mE$ and consider a node subset $\mW\subset \mV$. We recall that the \emph{subgraph $\mG[\mW]$ of $\mG$ induced by $\mW$} is by definition the graph whose node set is $\mW$ and whose edge set consists of all edges in $\mE$ whose endpoints are both in $\mW$. If $\mG$ is a \emph{weighted} graph with edge weights $\mu$ and node weights $\nu$, then each edge and each node in the induced subgraph keeps the same weight it has in $\mG$.
\begin{defi}
Let $\mG$ be a graph. A family of graphs $(\mG_n)_{n\in \mathbb N}$ is called \emph{growing} if $\mG_n$ is an induced subgraph of $\mG_m$ for all $n,m\in \mathbb N$ with $n\le m$. It is said to \emph{exhaust $\mG$} if
\begin{itemize}
\item $\mG_n$ is an induced subgraph of $\mG$  for all $n\in \mathbb N$  and 
\item $\bigcup_{n\in \mathbb N}\mV_n =\mV$.
\end{itemize}
\end{defi}
We are finally in the position to state our main result.
% Here and in the following $N(\mW)$ denotes the \emph{neighborhood} of $\mW$, i.e., the set of all $\mv\in \mV\setminus \mW$ such that $\mu(\mv,\mw)\neq 0$ for at least one $\mw\in \mW$.
\begin{theo}\label{thm:main}
Let $p\in (1,\infty)$, $T>0$, and $f\in L^2(0,T;\ell^2_d(\mV))$. Then the following assertions hold.
\begin{enumerate}[(1)]
\item The Cauchy problem $\rm(HE^N_p)$ admits a unique solution for all initial data $f_0\in w^{1,p,2}_{a,d}(\mV)$. 
\item The Cauchy problem $\rm(HE^D_p)$ admits a unique solution for all initial data $f_0\in \mathring{w}^{1,p,2}_{a,d}(\mV)$.
\item   If $f=0$, then the solution to either problem is given by a $C_0$-semigroup of nonlinear contractions.
\item More precisely, for all $f_0\in \mathring{w}^{1,p,2}_{a,d}(\mV)$ there is a growing family of finite graphs $(\mG_n)_{n\in \mathbb N}$ that exhausts $\mG$ and such that the sequence of solutions $(\varphi_n)_{n\in \mathbb N}$ to the Cauchy problem 
\begin{equation}
\tag{HE$^{(n)}_p$}
\left\{
\begin{array}{rcll}
\dot{\varphi}_n(t,\mv)&=&-\Lfun^{(n)}_p \varphi_n(t,\mv)+f(t),\qquad &t\in [0,T],\; \mv\in \mV_n,\\
\varphi_n(t,\mv)&=&0, &t\in [0,T],\; \mv \in \mV\setminus\mV_n,\\
\varphi_n(0,\mv)&=&f_0(\mv), &\mv\in \mV_n,
\end{array}
\right.\end{equation}
where $\mV_n$ denotes the node set of $\mG_n$ and for all $h\in c_{00}(\mV)$
\begin{eqnarray*}
\Lfun^{(n)}_p h(\mv)&:=&
\left\{
\begin{array}{ll}
\frac{1}{d(\mv)}\sum\limits_{\substack{\mw\in \mV_n\\ \mw \sim \mv}} a(\mv,\mw)\left|h(\mv)-h(\mw)\right|^{p-2}\left(h(\mv)-h(\mw)\right)\\
 \qquad + \frac{1}{d(\mv)}\left|h(\mv)\right|^{p-2}h(\mv)\sum\limits_{\substack{\mw\not\in \mV_n\\ \mw \sim \mv}} a(\mv,\mw),\qquad & \mv \in \mV_n,\\
 \\
-\frac{1}{d(\mv)}\left|f(\mv)\right|^{p-2}\left(f(\mv)\right)\sum\limits_{\substack{\mw\in \mV_n\\ \mw \sim \mv}} a(\mv,\mw),&\mv\not\in \mV_n,
\end{array}
\right.
\end{eqnarray*}
converges to the unique solution $\varphi$ of $\rm(HE^D_p)$, weakly in $H^1(0,\infty;\ell^2_d(\mV))$ and weakly$^*$ in $L^\infty(0,\infty;\mathring{w}^{1,p,2}_{a,d}(\mV))$. 
\item If additionally $f\equiv 0$ and $f_0\in \ell^q_d(\mV)$ for some $q\in [1,2)$ (resp., $f_0\in c_{0d}(\mV)$), then $(\varphi_n)_{n\in \mathbb N}$ converges to $\varphi$ also weakly$^*$ in $L^\infty(\mathbb R_+;\ell^q_d(\mV))$ (resp., in $L^\infty(\mathbb R_+;c_{0d}(\mV))$).
\end{enumerate}
\end{theo}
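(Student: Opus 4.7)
The core idea is to recognize $\Efun^N_p$ and $\Efun^D_p$ as proper, convex, and lower semicontinuous functionals on the Hilbert space $\ell^2_d(\mV)$, whose subdifferentials in the sense of convex analysis are exactly $\Lfun^N_p$ and $\Lfun^D_p$. Convexity is immediate from the convexity of $x\mapsto |x|^p/p$. Lower semicontinuity follows from Fatou's lemma, since $\ell^2_d$-convergence of $(f_n)$ to $f$ forces pointwise convergence $({\mathcal I}^T f_n)(\me)\to({\mathcal I}^T f)(\me)$ at every $\me\in\mE$. The effective domains are dense in $\ell^2_d(\mV)$ by Lemma~\ref{propw12p} and Corollary~\ref{propw12pD}. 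A direct computation of the G\^ateaux differential along $h\in w^{1,p,2}_{a,d}(\mV)$ produces exactly the sum $\sum_\me a(\me)|({\mathcal I}^T f)(\me)|^{p-2}({\mathcal I}^T f)(\me)({\mathcal I}^T h)(\me)$ appearing in Definitions~\ref{defi->thm:main} and~\ref{defi->thm:main_D}; matching this against $(g,h)_{\ell^2_d}$ identifies $\Lfun^N_p=\partial\Efun^N_p$ and $\Lfun^D_p=\partial\Efun^D_p$.

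With this identification in hand, assertions (1)--(3) follow immediately from the classical Brezis theory of semigroups generated by subdifferentials, recalled in the Appendix: for every $f_0$ in the effective domain and $f\in L^2(0,T;\ell^2_d(\mV))$, the Cauchy problem $\dot\varphi+\partial\Phi(\varphi)\ni f$ admits a unique strong solution $\varphi\in H^1(0,T;\ell^2_d(\mV))$ with the energy control needed to ensure $\varphi(t)\in D(\Phi)$ for a.e.\ $t$; for $f\equiv 0$, these solutions constitute a $C_0$-semigroup of nonlinear contractions on $\overline{D(\Phi)}=\ell^2_d(\mV)$.

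For part (4), I would fix an exhausting chain $\mV_1\subset\mV_2\subset\cdots$ with $\bigcup_n\mV_n=\mV$, induce $\mG_n:=\mG[\mV_n]$, and consider on $\ell^2_d(\mV)$ the penalized functional that equals $\Efun_p(f)$ whenever $f$ vanishes on $\mV\setminus\mV_n$ and is $+\infty$ otherwise (the defining sum runs over all edges having at least one endpoint in $\mV_n$, so edges connecting $\mV_n$ to its complement contribute a pure penalty $a(\me)|f(\me_\ast)|^p$ at the interior endpoint $\me_\ast$). This functional is convex, proper, lsc, and finite-dimensional on its effective domain, and its subdifferential is exactly $\Lfun^{(n)}_p$: the ``boundary'' term in the definition of $\Lfun^{(n)}_p$ is nothing but the tail contribution of edges leaving $\mV_n$. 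So (HE$^{(n)}_p$) is a finite-dimensional ODE solved in the Carath\'eodory sense. The energy identity
\[
\tfrac12\tfrac{d}{dt}\|\varphi_n(t)\|_{\ell^2_d}^2+p\,\Efun^{(n)}_p(\varphi_n(t))=(f(t),\varphi_n(t))_{\ell^2_d},
\]
combined with Gronwall and the convergence $\Efun^{(n)}_p(f_0\mathbf{1}_{\mV_n})\to\Efun^D_p(f_0)$ (which uses precisely that $f_0\in\mathring w^{1,p,2}_{a,d}(\mV)$, i.e., that $f_0$ is $w^{1,p,2}_{a,d}$-approximable by finitely supported functions), yields uniform bounds on $(\varphi_n)$ in $H^1(0,T;\ell^2_d(\mV))\cap L^\infty(0,T;\mathring w^{1,p,2}_{a,d}(\mV))$. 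Reflexivity (Corollary~\ref{propw12pD}) together with Banach--Alaoglu extracts a weakly/weakly$^*$ convergent subsequence with some limit $\varphi$.

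The main technical obstacle is identifying $\varphi$ as the unique solution of (HE$^D_p$), which amounts to passing to the limit in the nonlinear term $|{\mathcal I}^T\varphi_n|^{p-2}{\mathcal I}^T\varphi_n$ when tested against functions in $\mathring w^{1,p,2}_{a,d}(\mV)$. I would deploy Minty's monotonicity trick---available because the duality map $u\mapsto|u|^{p-2}u$ is monotone on $\ell^p_a(\mE)$---supplemented by the strong convergence $\varphi_n(\cdot,\mv)\to\varphi(\cdot,\mv)$ in $L^2(0,T)$ for each individual node $\mv$, which follows from the $H^1(0,T)$-bound and the Aubin--Lions lemma applied cell-by-cell (since any finite-dimensional marginal of $\ell^2_d(\mV)$ embeds compactly); this yields per-edge convergence, and the $L^\infty(0,T;\ell^p_a(\mE))$-bound combined with Vitali gives convergence of the full edge-sum. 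The boundary contributions along edges crossing between $\mV_n$ and $\mV\setminus\mV_n$ vanish asymptotically by the same Dirichlet approximability of $f_0$. Uniqueness from~(2) then promotes subsequential convergence to full convergence. Finally, part~(5) would follow from the $\ell^q_d$- and $c_{0d}$-contractivity of the nonlinear semigroup---to be established elsewhere in the paper via invariance of convex subsets under the resolvent---which furnishes uniform norm bounds on $(\varphi_n)$ sufficient to invoke Banach--Alaoglu in $L^\infty(\mathbb R_+;\ell^q_d(\mV))$ (and weak$^*$-compactness in $L^\infty(\mathbb R_+;c_{0d}(\mV))$ viewed inside $L^\infty(\mathbb R_+;\ell^\infty_d(\mV))$); the limit is pinned down by~(4), concluding the proof.
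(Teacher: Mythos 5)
Your overall strategy coincides with the paper's: (1)--(3) are obtained by identifying $\Lfun^N_p$ and $\Lfun^D_p$ with the subdifferentials of $\Efun^N_p$ and $\Efun^D_p$ (as in Lemmas~\ref{propfunc}, \ref{Lemmanittka} and~\ref{identlp}) and invoking the Brezis--Kato theory of Theorem~\ref{thm:brezkato}; (4) is the Galerkin scheme of Theorem~\ref{theo:galerkinsch} applied to the restricted (equivalently, penalized) energies on $\ell^2_d(\mV_n)$, whose subdifferentials are precisely the operators $\Lfun^{(n)}_p$ with the boundary terms; (5) rests on the contractivity/extrapolation properties of the restricted semigroups (Proposition~\ref{thm:cipgri}, Remark~\ref{beurden-restr}, Corollary~\ref{ralphwell-local}) together with Banach--Alaoglu in $L^\infty(\mathbb R_+;\ell^q_d(\mV))$. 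Your additional detail on identifying the limit (Minty's monotonicity trick, nodewise Aubin--Lions) fleshes out what the paper delegates to ``the usual arguments'' of the Galerkin method; this is a legitimate way to fill that step in.

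Two points in your treatment of (4) are loose, however. First, the energy identity you display is the one obtained by testing the equation with $\varphi_n$ itself; together with Gronwall it only yields bounds in $L^\infty(0,T;\ell^2_d(\mV))$ and on $\int_0^T\Efun^{(n)}_p(\varphi_n(t))\,dt$, not the claimed uniform bounds in $H^1(0,T;\ell^2_d(\mV))\cap L^\infty(0,T;\mathring{w}^{1,p,2}_{a,d}(\mV))$. For those you must test with $\dot\varphi_n$, i.e.\ use the energy inequality~\eqref{eq:energineq} of Theorem~\ref{theo:galerkinsch}, and this requires the energies of the discrete initial data to be uniformly bounded. Second, and relatedly, your claim $\Efun^{(n)}_p(f_0\mathbf 1_{\mV_n})\to\Efun^D_p(f_0)$ does not follow merely from $f_0\in\mathring{w}^{1,p,2}_{a,d}(\mV)$: that membership provides approximation by \emph{some} sequence of finitely supported functions, not by the truncations $f_0\mathbf 1_{\mV_n}$, and the boundary penalty $\sum_{\me}a(\me)|f_0(\me_\ast)|^p$ along an arbitrary exhaustion need not vanish or even remain bounded. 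The paper's Galerkin scheme avoids this by choosing initial approximants $f_{0n}\in\ell^2_d(\mV_n)$ converging to $f_0$ in the $w^{1,p,2}_{a,d}$-norm (step (1) of the scheme) and by letting the exhausting family $(\mG_n)_{n\in\mathbb N}$ depend on $f_0$; if you insist on truncated initial data you must either justify the energy bound for truncations or adapt the exhaustion to the supports of the approximating sequence.
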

}

Before proving this theorem, we will need several preliminary results. We refer to the Appendix for notations and terminology.

%The Poincar\'e-type inequalities discussed in Remark~\ref{poinc-rem} can be used to prove coercivity (i.e.,\ boundedness of all sublevel sets) of $\Efun_p$ in the case of finite graphs -- in these special cases well-posedness of the $p$-heat equation is then immediate if suitable boundary conditions are imposed
%The case of finite $\mV$ is the setting considered e.g.\ in~\cite{JiaChuReg04,ParKimChu09}. 
%% .\[
%% \Efun_p(\cdot)+\frac{\omega}{2} \|\cdot \|_{\ell^2_d}^2:\ell^2_d(\mV)\to [0,+\infty]
%% \]
%% for any $\omega\ge 0$.

%% \footnote{ {il testo seguent \`e probabilmente inutile. per la disuguaglianza di poincar\'e occorre davvero che la funzione scompaia al bordo, e i loro risultati di analisi armonica probabilmente non sono applicabili al caso nonlineare. per\`o dovrei leggere pi\'u attentamente~\cite{HolSoa97}.}Following~\cite{HaeKelLen12}, we also consider the Dirichlet $p$-Laplacian, which is defined as the subdifferential of the energy functional $\Efun^D$, where
%% $$\Efun^D(f):=\left\{
%% \begin{array}{ll}
%% \Efun_p(f)\qquad &\hbox{if }f\in w^{1,p,2}_{m,0}(\mV),\\
%% +\infty &\hbox{otherwise}. 
%% \end{array}
%% \right.$$}

\begin{lemma}\label{propfunc}
Let $p\in (1,\infty)$. Then the functional $\Efun_p$ is convex and $\Efun_p+\frac{\omega}{2} \|\cdot \|_{\ell^2_d}^2$ is coercive for any $\omega\ge 0$. Furthermore, $\Efun^N_p$ is continuously Fréchet differentiable as a functional on $w^{1,p,2}_{a,d}(\mV)$ and lower semicontinuous as a functional on $\ell^2_d(\mV)$. { Likewise,  $\Efun^D_p$ is continuously Fréchet differentiable as a functional on $\mathring{w}^{1,p,2}_{a,d}(\mV)$ and lower semicontinuous as a functional on $\ell^2_d(\mV)$.}
\end{lemma}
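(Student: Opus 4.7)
My plan is to verify the four claims in sequence, each by a soft argument.

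Convexity of $\Efun_p$ is immediate once one writes $\Efun_p = \Phi \circ \mathcal I^T$, where $\Phi:\ell^p_a(\mE) \to [0,\infty)$ is the functional $u\mapsto \tfrac{1}{p}\|u\|^p_{\ell^p_a}$. Convexity of $\Phi$ follows since $t \mapsto t^p$ is convex and nondecreasing on $[0,\infty)$ for $p \geq 1$ and $u \mapsto \|u\|_{\ell^p_a}$ is a seminorm; composition with the linear operator $\mathcal I^T$ preserves convexity. For the coercivity claim, I use that $\Efun_p \geq 0$, so that $\Efun_p + \tfrac{\omega}{2}\|\cdot\|^2_{\ell^2_d}$ dominates $\tfrac{\omega}{2}\|\cdot\|^2_{\ell^2_d}$; for $\omega>0$ this is evidently coercive on $\ell^2_d(\mV)$, and the sum is in any case bounded below by $0$.

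For Fréchet differentiability of $\Efun^N_p$, the plan is to invoke the standard fact that $\Phi:u \mapsto \tfrac{1}{p}\|u\|^p_{\ell^p_a}$ is continuously Fréchet differentiable on $\ell^p_a(\mE)$ for $p \in (1,\infty)$, with derivative at $u$ given by $v\mapsto \sum_{\me\in \mE} a(\me)|u(\me)|^{p-2}u(\me)\,v(\me)$; this is classical for $p$-th powers of $\ell^p$-norms and uses $p>1$ to guarantee continuity of $t \mapsto |t|^{p-2}t$ at the origin. Since $\mathcal I^T:w^{1,p,2}_{a,d}(\mV) \to \ell^p_a(\mE)$ is a bounded linear operator by the very definition of the norm on $w^{1,p,2}_{a,d}(\mV)$, the chain rule yields $\Efun^N_p = \Phi\circ \mathcal I^T \in C^1(w^{1,p,2}_{a,d}(\mV))$ with differential
$$d\Efun^N_p(f)(h) = \sum_{\me \in \mE} a(\me)\,|(\mathcal I^T f)(\me)|^{p-2}(\mathcal I^T f)(\me)\,(\mathcal I^T h)(\me),\qquad h \in w^{1,p,2}_{a,d}(\mV).$$

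For lower semicontinuity on $\ell^2_d(\mV)$, the key tool is Fatou's lemma. If $f_n \to f$ in $\ell^2_d(\mV)$ and $\liminf_n \Efun^N_p(f_n)$ is finite, then convergence in $\ell^2_d(\mV)$ implies pointwise convergence on $\mV$ (because $d(\mv)>0$ for every $\mv$), so $(\mathcal I^T f_n)(\me) \to (\mathcal I^T f)(\me)$ for each $\me \in \mE$; Fatou applied to the nonnegative sum $\sum_\me a(\me)|(\mathcal I^T f_n)(\me)|^p$ places $f$ in $w^{1,p,2}_{a,d}(\mV)$ and yields $\Efun^N_p(f) \leq \liminf_n \Efun^N_p(f_n)$. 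The corresponding statements for $\Efun^D_p$ follow from the same arguments restricted to $\mathring{w}^{1,p,2}_{a,d}(\mV)$, with one extra ingredient for lower semicontinuity: to verify that the $\ell^2_d$-limit $f$ still lies in $\mathring{w}^{1,p,2}_{a,d}(\mV)$, I would use that the finiteness of $\liminf_n \Efun^D_p(f_n)$ together with boundedness of $(f_n)$ in $\ell^2_d(\mV)$ renders $(f_n)$ bounded in $w^{1,p,2}_{a,d}(\mV)$; the reflexivity granted by Corollary~\ref{propw12pD} then yields a weakly convergent subsequence, whose weak limit belongs to the closed (hence weakly closed) subspace $\mathring{w}^{1,p,2}_{a,d}(\mV)$ and must coincide with $f$ by uniqueness of limits (weak convergence in $w^{1,p,2}_{a,d}(\mV)$ implies weak convergence in $\ell^2_d(\mV)$). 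I do not anticipate any serious obstacle; the only delicate point is precisely this identification of the limit in the Dirichlet case, which is why the argument leans on reflexivity rather than on a direct closure argument.
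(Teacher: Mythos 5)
Your argument is correct, and for the convexity and Fréchet differentiability claims it coincides with the paper's proof (write $\Efun_p=\frac1p\|\cdot\|^p_{\ell^p_a}\circ\mathcal I^T$, use differentiability of the $p$-th power of the $\ell^p_a$-norm for $p\in(1,\infty)$, boundedness of $\mathcal I^T$ on $w^{1,p,2}_{a,d}(\mV)$, and the chain rule). Where you genuinely diverge is the lower semicontinuity step: the paper obtains it abstractly by citing \cite[Lemma~IV.5.2]{Sho97}, which transfers lower semicontinuity from the reflexive space $w^{1,p,2}_{a,d}(\mV)$ (resp.\ $\mathring{w}^{1,p,2}_{a,d}(\mV)$, via Lemma~\ref{propw12p} and Corollary~\ref{propw12pD}) to $\ell^2_d(\mV)$, whereas you argue directly: $\ell^2_d$-convergence gives pointwise convergence at every node because $d(\mv)>0$, hence pointwise convergence of $\mathcal I^T f_n$ on $\mE$, and Fatou's lemma does the rest; reflexivity enters your proof only in the Dirichlet case, to identify the $\ell^2_d$-limit as an element of the weakly closed subspace $\mathring{w}^{1,p,2}_{a,d}(\mV)$. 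Your route is more elementary and self-contained, and for the Neumann functional it does not use reflexivity at all -- so, unlike the paper's argument (cf.\ Remark~\ref{rem:identlp}(2)), it would extend verbatim to $p=1$; the paper's route is shorter and reuses the same abstract lemma for both functionals, at the price of the reflexivity hypothesis. One small caveat on the coercivity claim: your lower bound $\Efun_p+\frac\omega2\|\cdot\|^2_{\ell^2_d}\ge\frac\omega2\|\cdot\|^2_{\ell^2_d}$ only yields coercivity for $\omega>0$ (which is all that is actually needed later, e.g.\ to control sublevel sets in the $w^{1,p,2}_{a,d}$-norm), and "bounded below by $0$" is not coercivity for $\omega=0$; but the paper's own proof offers no argument for the case $\omega=0$ either, and indeed $\Efun_p$ alone has unbounded sublevel sets whenever $\mG$ has a component of finite surface, so you are no less precise than the original on this point.
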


\begin{proof}
Since for all $p\in (1,\infty)$
\begin{equation}\label{eq-ep}
\Efun_p \equiv\frac{1}{p}\|\cdot\|^p_{\ell^p_a(\mE)}\circ \mathcal I^T \qquad \hbox{on }w^{1,p,2}_{a,d}(\mV),
\end{equation}
the composition of a convex functional and a linear operator, the functional $\Efun_p$ is convex -- hence so is any perturbation by another convex functional, and in particular $\Efun_p(\cdot)+\frac{\omega}{2} \|\cdot \|_{\ell^2_d}^2$ is convex and coercive for any $\omega\ge 0$. 

In view of~\eqref{eq-ep}, and because $\mathcal I^T$ is bounded 	from $w^{1,p,2}_{a,d}(\mV)$ to $\ell^p_a(\mE)$, in order to check continuous Fréchet differentiability of $\Efun_p$ it suffices to observe that the functional $\|\cdot\|^p_{\ell^p_a}$ on $\ell^p_a(\mE)$ is continuously Fréchet differentiable for $p\in (1,\infty)$ -- with  
\begin{eqnarray*}
\Efun^{N'}_p(f)h&=&\left\langle |{\mathcal I^T}f|^{p-2}{\mathcal I^T}f,{\mathcal I}^T h\right\rangle_{\ell^{p'}_a,\ell^p_a}\\
&=&\sum_{\me\in \mE} a(\me)|({\mathcal I^T}f)(\me)|^{p-2}({\mathcal I^T}f)(\me) ({\mathcal I^T}h)(\me),\qquad f,h\in w^{1,p,2}_{a,d}(\mV),
\end{eqnarray*}
by the chain rule.  Thus, $\Efun_p$ is in particular lower semicontinuous as a functional on $w^{1,p,2}_{a,d}(\mV)$, and lower semicontinuity as a functional on $\ell^2_d(\mV)$ follows from~\cite[Lemma~IV.5.2]{Sho97}. 

The same assertions hold for $\Efun^D_p$, since it is simply the restriction to $\mathring{w}^{1,p,2}_{a,d}(\mV)$.
\end{proof}

 By virtue of Lemma~\ref{Lemmanittka}, Lemmas~\ref{propfunc} and~\ref{propw12p} yield now that $\Efun^N_p$ has a subdifferential that is at most single-valued, and so does  $\Efun^D_p$. They can be expressed weakly as follows.
 
 {
\begin{lemma}\label{identlp}
Let $p\in (1,\infty)$. The subdifferential $\partial \Efun^N_p$ of the functional $\Efun^N_p$ agrees with the operator $\Lfun^N_p$ introduced in Definition~\ref{defi->thm:main}. Likewise, the subdifferential $\partial \Efun^D_p$ of the functional $\Efun^D_p$ agrees with the operator $\Lfun^D_p$ introduced in Definition~\ref{defi->thm:main_D}. The action of both operators on test functions agrees and it is given by
\begin{equation}
\label{L_p:precise}
\Lfun^N_pf(\mv)=\Lfun^D_pf(\mv)=\frac{1}{d(\mv)}\sum_{\substack{\mw\in \mV\\ \mw\sim \mv}}a(\mv,\mw)\left|f(\mw)-f(\mv)\right|^{p-2}\left(f(\mw)-f(\mv)\right),\qquad f\in c_{00}(\mV),\; \mv\in \mV.
\end{equation}
\end{lemma}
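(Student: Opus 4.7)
The plan is to build on Lemma~\ref{propfunc}, which shows that both $\Efun^N_p$ and $\Efun^D_p$ are proper, convex, lower semicontinuous on $\ell^2_d(\mV)$, and continuously Fréchet differentiable on their respective form domains $w^{1,p,2}_{a,d}(\mV)$ and $\mathring w^{1,p,2}_{a,d}(\mV)$. Under exactly these hypotheses, the general criterion from the Appendix (Lemma~\ref{Lemmanittka}) identifies the subdifferential in $\ell^2_d(\mV)$ with the Fréchet derivative: $\partial\Efun^N_p$ is automatically single-valued, and $g=\partial\Efun^N_p(f)$ holds precisely when $f$ lies in the form domain and some $g\in\ell^2_d(\mV)$ satisfies $\Efun^{N'}_p(f)h=\langle g,h\rangle_{\ell^2_d}$ for every $h$ in the form domain. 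The analogous statement holds for $\Efun^D_p$ with $\mathring w^{1,p,2}_{a,d}(\mV)$ in place of $w^{1,p,2}_{a,d}(\mV)$.

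Next I would insert the explicit expression for $\Efun^{N'}_p(f)$ obtained in the proof of Lemma~\ref{propfunc},
\[
\Efun^{N'}_p(f)h=\sum_{\me\in\mE}a(\me)|\mathcal I^T f(\me)|^{p-2}\mathcal I^T f(\me)\,\mathcal I^T h(\me),
\]
and rewrite the pairing on the right as $\langle g,h\rangle_{\ell^2_d}=\sum_{\mv\in\mV}g(\mv)h(\mv)d(\mv)$. The resulting equation is literally the condition defining $D(\Lfun^N_p)$ in Definition~\ref{defi->thm:main}, so $\partial\Efun^N_p=\Lfun^N_p$. Repeating the argument verbatim with the Dirichlet form domain gives $\partial\Efun^D_p=\Lfun^D_p$.

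It remains to verify the pointwise formula~\eqref{L_p:precise}. For $f\in c_{00}(\mV)\subset\mathring w^{1,p,2}_{a,d}(\mV)\subset w^{1,p,2}_{a,d}(\mV)$ I would test the weak identity against $h=\delta_\mv\in c_{00}(\mV)$. Local finiteness, together with the finite support of $f$, ensures that only finitely many edges incident to $\mv$ contribute, so all sums terminate and no convergence issue arises. Splitting the edge sum according to whether $\mv$ is the initial or terminal endpoint of $\me$, using the antisymmetry of $t\mapsto |t|^{p-2}t$ to merge the two halves into a single orientation-independent sum over neighbours of $\mv$, and dividing by $d(\mv)$, one recovers~\eqref{L_p:precise}. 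The resulting $g$ is finitely supported and hence trivially belongs to $\ell^2_d(\mV)$, so $c_{00}(\mV)\subset D(\Lfun^N_p)\cap D(\Lfun^D_p)$; since the Dirichlet and Neumann form domains both contain $c_{00}(\mV)$, the weak condition uniquely pins down $g(\mv)$ at every vertex $\mv$, and the two operators agree on test functions. The only delicate point is the bookkeeping of the incidence signs in this summation by parts; the rest is the standard convex-analytic machinery already encapsulated in Lemmas~\ref{propfunc} and~\ref{Lemmanittka}.
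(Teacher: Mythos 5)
Your proposal is correct and follows essentially the same route as the paper: invoke Lemma~\ref{Lemmanittka} together with the Fréchet differentiability established in Lemma~\ref{propfunc} to identify $\partial\Efun^N_p$ and $\partial\Efun^D_p$ with the weak formulations defining $\Lfun^N_p$ and $\Lfun^D_p$, then unravel the incidence-matrix sums for finitely supported $f$ to get~\eqref{L_p:precise}. Testing against $h=\delta_\mv$ is just a repackaging of the paper's direct expansion of the edge sum, so there is no substantive difference.
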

Observe that the series in $D(\Lfun^N_p)$ converge absolutely by Hölder's inequality, whereas the series in~\eqref{L_p:precise} actually consists of finitely many terms only.}
\begin{proof}
Let $f\in w^{1,p,2}_{a,d}(\mV)$. By virtue {of Lemma~\ref{Lemmanittka}, $f$  is in the domain of $\partial\Efun^N_p $ and $g=\partial \Efun^N_p f$ if and only if
\[
\Efun^{N'}_p (f)h=\left(g|h\right)_{\ell^2_d}\quad \hbox{for all }h\in w^{1,p,2}_{a,d}(\mV),
\]
or rather
\[
 \sum_{\me\in \mE} a(\me)|({\mathcal I^T}f)(\me)|^{p-2}({\mathcal I^T}f)(\me) ({\mathcal I^T}h)(\me)=\sum_{\mv \in \mV} g(\mv)h(\mv)d(\mv)\quad \hbox{for all }h\in w^{1,p,2}_{a,d}(\mV).
\]
A similar argument holds for $\Efun^D_p$. } More explicitly, for all $\mv \in \mV$ and all $f \in c_{00}(\mV)$
\begin{eqnarray*}
\partial \Efun^N_p f(\mv)=\partial \Efun^D_p f(\mv)&= &\frac{1}{d(\mv)}\sum_{\me\in \mE} \iota_{\mv\me} \left( a(\me)\left| \sum_{\mw \in\mV}\iota_{\mw\me}f(\mw)\right|^{p-2}\left( \sum_{\mw \in\mV}\iota_{\mw\me}f(\mw)\right)\right)\\
&= &\frac{1}{d(\mv)}\sum_{\me\in \mE} \iota_{\mv\me} \left( a(\me)\left|f(\me_+)-f(\me_-)\right|^{p-2}\left(f(\me_+)-f(\me_-)\right)\right)\\
&= &\frac{1}{d(\mv)}\sum_{\substack{\mw\in \mV\\ \mw\sim \mv}}a(\mv,\mw)\left|f(\mw)-f(\mv)\right|^{p-2}\left(f(\mw)-f(\mv)\right).\end{eqnarray*}
This completes the proof.
\end{proof}

\begin{rem}\label{rem:identlp}
{
(1) In particular, the (discrete) $p$-Laplace--Beltrami-type operator associated with the metric of $\mG$ is given by
\begin{equation}\label{beltr-eq}
{\Delta}_p f:=\frac{1}{\nu}\mathcal I(\mu|\mathcal I^T f|^{p-2}\mathcal I^T f),\qquad f\in c_{00}(\mV),
\end{equation}
in a variational sense, and for $\mu\equiv 1$ and $\nu\equiv 1$ we find 
\begin{equation*}\label{plapl1}
\Delta_{p} f={\mathcal I}\left( |{\mathcal I}^T f|^{p-2}{\mathcal I}^T f\right),\qquad f\in c_{00}(\mV).
\end{equation*}
In view of the mentioned analogies between discrete and continuous calculus, this is the correct discrete pendant of the $p$-Laplacian in the continuum.
}

(2) If $\mG$ is uniformly locally finite, then $\Efun^N_p$ and $\Efun^D_p$ are lower semicontinuous also for $p=1$. However, we cannot invoke again~\cite[Lemma~IV.5.2]{Sho97}, as this result relies upon reflexivity of the domain of the functional. Nevertheless, by Lemma~\ref{lemma:car11} $\mathcal I^T $ is a bounded linear operator from $\ell^2_d(\mV)$ to $\ell^2_a(\mE)$. Moreover, lower semicontinuity of $\|\cdot\|_{\ell^1_a(\mE)}$ (viewed as a function from $\ell^2_a(\mE)$ to $[0,+\infty]$) follows from Fatou's Lemma. In view of
\[
\Efun^N_1 \equiv \|\cdot\|_{\ell^1_a}\circ \mathcal I^T:\ell^2_d(\mV)\to [0,+\infty],
\]
we also deduce lower semicontinuity of $\Efun^N_1$ and of its restriction $\Efun^D_1$, hence we may consider their subdifferentials, just like we have done in the case of $p>1$. %We do not know whether $\Efun_1$ is lower semicontinuous even upon dropping the assumption of uniform local finiteness. 
In order to avoid technicalities when determining these subdifferentials, however, we will from now on restrict {attention} to the case of $p>1$.
\end{rem}

When studying qualitative properties of solutions to the $p$-heat equation, in the continuum most proofs are made easy by the locality of the operator. In the discrete case locality fails to hold: The key point in our proofs will then be that the considered orthogonal projections map vectors in $\ell^2_d(\mV)$ into vectors with smaller oscillation. 

We will need the following apparent property of the $\ell^p$-norm.

\begin{lemma}\label{almostconv}
{For all $k\in \mathbb R$ and $p\in [1,\infty)$ the function}
\[
f_{k,p}:[0,\infty)\ni \alpha\mapsto |k+\alpha|^p +|k-\alpha|^p\in [0,\infty)
\]
is strictly monotonically increasing.
\end{lemma}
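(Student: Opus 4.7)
\medskip

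The plan is to reduce the claim to the strict monotonicity of the scalar function $\phi_p(t):=|t|^{p-2}t$ on $\mathbb R$ for $p>1$, which is the derivative of the strictly convex function $t\mapsto |t|^p/p$. (For $p=1$ the statement as written fails whenever $k\neq 0$, since $f_{k,1}(\alpha)=2|k|$ on $[0,|k|]$ and $2\alpha$ on $[|k|,\infty)$; in view of Remark~\ref{rem:identlp} only the case $p\in(1,\infty)$ is needed in the sequel, and I will carry out the argument under this restriction.)

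First I would observe that for $p>1$ the function $f_{k,p}$ is continuously differentiable on $(0,\infty)$ (and in fact on all of $\mathbb R$ if $p\ge 2$), and one computes
\[
f'_{k,p}(\alpha)=p\bigl[\phi_p(k+\alpha)-\phi_p(k-\alpha)\bigr],\qquad \alpha>0.
\]
The key intermediate fact is that $\phi_p$ is strictly monotonically increasing on $\mathbb R$: this is essentially the statement that $t\mapsto |t|^p/p$ is a strictly convex $C^1$-function, which follows e.g.\ from the fact that its second derivative $(p-1)|t|^{p-2}$ is strictly positive on $\mathbb R\setminus\{0\}$ together with continuity at $0$.

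Since for every $\alpha>0$ one has $k+\alpha>k-\alpha$, strict monotonicity of $\phi_p$ yields $f'_{k,p}(\alpha)>0$ for all $\alpha>0$. For any $0\le \alpha_1<\alpha_2$ we then obtain
\[
f_{k,p}(\alpha_2)-f_{k,p}(\alpha_1)=\int_{\alpha_1}^{\alpha_2}f'_{k,p}(\alpha)\,d\alpha>0,
\]
which is the claim.

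I do not expect any real obstacle here; the only slightly delicate point is the case $1<p<2$, where the derivative of $|t|^p$ exists but is not Lipschitz at the origin. This does not affect the argument, since all that is needed is continuous differentiability away from the set where $k+\alpha=0$ or $k-\alpha=0$ (a finite set), together with continuity of $f_{k,p}$ on $[0,\infty)$, so that the fundamental theorem of calculus can be applied on each subinterval and the inequality above persists upon summing. Alternatively, and perhaps more elegantly, one can bypass calculus altogether: strict convexity of $t\mapsto |t|^p$ implies that for $0\le \alpha_1<\alpha_2$ the pair $(k+\alpha_1,k-\alpha_1)$ is a strict convex combination of $(k+\alpha_2,k-\alpha_2)$ and $(k-\alpha_2,k+\alpha_2)$ with the same sum, so that
\[
|k+\alpha_1|^p+|k-\alpha_1|^p<|k+\alpha_2|^p+|k-\alpha_2|^p,
\]
yielding the claim directly from the strict convexity of $|\cdot|^p$.
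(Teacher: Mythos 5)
Your proposal is correct, and in fact the paper offers no proof of Lemma~\ref{almostconv} at all: it is stated as an ``apparent property'' of the $\ell^p$-norm, so your argument fills in a proof rather than diverging from one. Both of your routes are valid for $p\in(1,\infty)$: the calculus route, with $f'_{k,p}(\alpha)=p\bigl[|k+\alpha|^{p-2}(k+\alpha)-|k-\alpha|^{p-2}(k-\alpha)\bigr]>0$ for $\alpha>0$ by strict monotonicity of $t\mapsto|t|^{p-2}t$ (note that for every $p>1$ the map $t\mapsto|t|^p$ is in fact $C^1$ on all of $\mathbb R$, so your extra care about the points where $k+\alpha=0$ or $k-\alpha=0$ is unnecessary, though harmless), and the purely convexity-based route, which is the cleaner of the two since it avoids differentiation entirely by writing $(k+\alpha_1,k-\alpha_1)$ as a convex combination of $(k+\alpha_2,k-\alpha_2)$ and its swap. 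Your remark that strict monotonicity fails for $p=1$ whenever $k\neq 0$ (indeed $f_{k,1}\equiv 2|k|$ on $[0,|k|]$) is a genuine, if minor, correction to the statement as printed; it is harmless for the paper, because the lemma is invoked only within the proof of Proposition~\ref{thm:cipgri}, which assumes $p\in(1,\infty)$, and even there only the non-strict inequality $f_{k,p}(|\alpha|)\ge f_{k,p}(|\beta|)$ for $|\alpha|\ge|\beta|$ is actually needed, which holds for $p=1$ as well.
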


In the special case of $p=2$, the following results have been obtained in~\cite{KelLen09} already. In the nonlinear setting, the notion of submarkovian semigroups has been proposed in~\cite{CipGri03}. Recall that a semigroup on a Banach space $X$ is called \emph{irreducible} if the only ideals of $X$ that are left invariant under it are the trivial ones.

{
\begin{prop}\label{thm:cipgri}
Let $p\in (1,\infty)$. Both nonlinear $C_0$-semigroups $(e^{-t\Lfun^N_p})_{t\ge 0}$  and  $(e^{-t\Lfun^D_p})_{t\ge 0}$ are submarkovian, i.e., they are order preserving and $\|\cdot\|_{\ell^\infty_d}$-contractive. Furthermore, each of them is irreducible if and only if $\mG$ is connected.
\end{prop}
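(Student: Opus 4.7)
The strategy is to view both semigroups as subgradient flows of the convex, lower semicontinuous functionals $\Efun^N_p$ and $\Efun^D_p$ on $\ell^2_d(\mV)$, and to apply the standard Brezis--Barthélemy-type invariance criterion (recalled in the Appendix): a closed convex set $C\subset\ell^2_d(\mV)$ is left invariant by the semigroup generated by $-\partial\Efun$ if and only if its orthogonal projection $P_C$ satisfies $\Efun(P_C f)\le \Efun(f)$ for every $f\in\ell^2_d(\mV)$. All projections used below ($\wedge$, $\vee$, vertex-wise truncation, multiplication by $\mathbf{1}_\mW$) preserve $c_{00}(\mV)$ and are continuous on $\ell^2_d(\mV)$, so they automatically keep $\mathring w^{1,p,2}_{a,d}(\mV)$ invariant; the Neumann and Dirichlet cases can therefore be handled by one and the same computation.

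For order preservation I would use the familiar equivalent reformulation
\[
\Efun_p(f\vee g)+\Efun_p(f\wedge g)\le \Efun_p(f)+\Efun_p(g),\qquad f,g\in D(\Efun_p).
\]
Since $\Efun_p$ is a sum of edge contributions, this reduces edge-by-edge to the rearrangement inequality
\[
|\alpha-\beta|^p+|\gamma-\delta|^p\le |\alpha-\delta|^p+|\gamma-\beta|^p \qquad \hbox{whenever }\alpha\le\gamma,\ \beta\le\delta.
\]
My plan is to derive it by observing that both pairs $(\alpha-\beta,\gamma-\delta)$ and $(\alpha-\delta,\gamma-\beta)$ share the common sum $(\alpha+\gamma)-(\beta+\delta)$ while the first has the smaller absolute spread; writing each pair in the form $(k+\eta,k-\eta)$ with $k$ their common midpoint, Lemma~\ref{almostconv} yields the estimate at once.

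For $\|\cdot\|_{\ell^\infty_d}$-contractivity I apply the criterion to the closed balls $B_k:=\{f\in\ell^2_d(\mV):\|f\|_{\ell^\infty_d}\le k\}$ for all $k>0$, whose orthogonal projections act vertex-wise as truncations. The edge-wise monotonicity statement Lemma~\ref{almostconv} again provides that these truncations cannot increase $\Efun_p$, so each $B_k$ is invariant and the desired contractivity follows.

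For irreducibility, I would use the fact that the closed ideals of $\ell^2_d(\mV)$ are exactly the subspaces $I_\mW:=\{f\in\ell^2_d(\mV):f\equiv 0\hbox{ on }\mV\setminus\mW\}$, $\mW\subset\mV$, with orthogonal projection $f\mapsto \mathbf{1}_\mW f$. Checking the condition $\Efun_p(\mathbf{1}_\mW f)\le\Efun_p(f)$ edge-by-edge shows it can hold for every $f$ only when no edge of $\mG$ crosses the boundary of $\mW$: indeed, once $\mv\in\mW$ and $\mw\notin\mW$ are adjacent, the two-vertex witness $f:=c\delta_\mv+\delta_\mw$ makes the difference $\Efun_p(\mathbf{1}_\mW f)-\Efun_p(f)$ grow like $(c^p-|c-1|^p)\, a(\mv,\mw)/p$ minus a fixed contribution, hence becomes positive for $c$ large enough. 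Consequently the only invariant ideals are those $I_\mW$ with $\mW$ a union of connected components of $\mG$, and irreducibility is equivalent to $\mG$ being connected. The main obstacle throughout is the bookkeeping that guarantees each projection maps the relevant form domain into itself; for the Dirichlet case this is handled by the density of $c_{00}(\mV)$ in $\mathring w^{1,p,2}_{a,d}(\mV)$ combined with $\ell^2_d$-continuity of the projections.
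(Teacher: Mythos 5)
Your treatment of order preservation and of irreducibility is essentially the paper's own argument: the lattice inequality from Lemma~\ref{Bar96} is checked edge by edge and reduced, via the midpoint substitution, to the monotonicity statement of Lemma~\ref{almostconv}, and the irreducibility step with the two-vertex witness $c\delta_{\mv}+\delta_{\mw}$ for $c$ large, together with the observation that connected components yield invariant ideals, is exactly the computation used here. Your closing remark on the Dirichlet bookkeeping (approximation from $c_{00}(\mV)$ plus the energy decrease of the projections) is also in line with how the paper passes from $\Efun^N_p$ to $\Efun^D_p$.

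The genuine gap is in the $\|\cdot\|_{\ell^\infty_d}$-contractivity step. Invariance of the balls $B_k=\{f:\|f\|_{\ell^\infty_d}\le k\}$ yields at best the single-orbit bound $\|e^{-t\Lfun^N_p}f_0\|_{\ell^\infty_d}\le\|f_0\|_{\ell^\infty_d}$ (equivalently, contractivity towards $0$, since $e^{-t\Lfun^N_p}0=0$). For a genuinely nonlinear semigroup ($p\neq 2$) this is strictly weaker than what the proposition asserts, namely $\|e^{-t\Lfun^N_p}f_0-e^{-t\Lfun^N_p}g_0\|_{\ell^\infty_d}\le\|f_0-g_0\|_{\ell^\infty_d}$ for \emph{all pairs} $f_0,g_0$; in the linear case the two notions coincide, but here one cannot pass from ball invariance to the two-point estimate without an additivity the semigroup does not possess. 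This is precisely why Lemma~\ref{Bar96} contains a separate criterion for $L^\infty$-contractivity, the two-function inequality involving $\frac{g_0+(f_0-g_0+1)_+}{2}+\frac{g_0-(f_0-g_0-1)_-}{2}$ and its companion; the paper verifies that inequality edge by edge (after using the $p$-homogeneity of $\Efun_p$, and with a nine-case analysis again resting on Lemma~\ref{almostconv}). Your argument needs to be replaced by, or completed to, a verification of that criterion. A secondary flaw: even the ball invariance you claim is not immediate when $d$ is non-constant, because the orthogonal projection of $\ell^2_d(\mV)$ onto $B_k$ truncates at the vertex-dependent thresholds $k/d(\mv)$, and such a truncation can \emph{increase} $|f(\mv)-f(\mw)|$ along an edge (take $f$ constant with value between the two thresholds), so Lemma~\ref{almostconv} alone does not give $\Efun_p(P_{B_k}f)\le\Efun_p(f)$.
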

}

 The proof of the assertion concerning irreducibility is due to René Pröpper.
 
In the special case $p=2$, by~\cite[Thm.~2.9 and Def.~2.8]{Ouh05} connectedness hence implies (through positivity and irreducibility) that if $f\ge 0$ but $f\not=0$, then $e^{-t\Delta^N_{2}}f>0$: This property of the discrete Laplacian is already known, see e.g.~\cite[Cor.~2.9]{KelLen09}.

\begin{proof}
The condition corresponding to~\eqref{cipgrieq1} for $\Efun^N_p$, i.e.,
\begin{equation*}
\Efun_p (f \wedge g) + \Efun_p (f \vee g) \leq \Efun_p (f) + \Efun_p (g)\quad\hbox{for all } f,g \in w^{1,p,2}_{a,d}(\mV),
 \end{equation*}
%% or equivalently
%% \[
%% \sum_{\me \in \mE} \left(|\mathcal I^T (f\wedge g)(\me)|^p + |\mathcal I^T (f\vee g)(\me)|^p \right)\le
%% \sum_{\me \in \mE} \left(|\mathcal I^T f(\me)|^p + |\mathcal I^T g(\me)|^p \right) \quad\hbox{for all } f,g \in w^{1,p,2}_{a,d}(\mV),
%% \]
is sufficient (and necessary) for $(e^{-t\Lfun^N_p})_{t\ge 0}$ to be order preserving, by Proposition~\ref{Bar96}. If we can prove this condition to be satisfied, then in particular
\begin{equation*}
\Efun_p (f \wedge g) + \Efun_p (f \vee g) \leq \Efun_p (f) + \Efun_p (g)\quad\hbox{for all } f,g \in \mathring{w}^{1,p,2}_{a,d}(\mV),
 \end{equation*}
%% or equivalently
%% \[
%% \sum_{\me \in \mE} \left(|\mathcal I^T (f\wedge g)(\me)|^p + |\mathcal I^T (f\vee g)(\me)|^p \right)\le
%% \sum_{\me \in \mE} \left(|\mathcal I^T f(\me)|^p + |\mathcal I^T g(\me)|^p \right) \quad\hbox{for all } f,g \in w^{1,p,2}_{a,d}(\mV),
%% \]
hence also $(e^{-t\Lfun^D_p})_{t\ge 0}$ is order preserving.

In particular, the semigroups are order preserving if
\[
|\mathcal I^T (f\wedge g)(\me)|^p + |\mathcal I^T (f\vee g)(\me)|^p \le
|\mathcal I^T f(\me)|^p + |\mathcal I^T g(\me)|^p\qquad  \hbox{for all } \me \in \mE\hbox{ and all }f,g \in w^{1,p,2}_{a,d}(\mV).
\]
This can be proved taking $\me\in \mE$ and $f,g \in w^{1,p,2}_{a,d}(\mV)$ and dividing the four possible cases 
\begin{itemize}
\item $f(\me_+)\le g(\me_+)$ and $f(\me_-)\le g(\me_-)$, 
\item $g(\me_+)\le f(\me_+)$ and $g(\me_-)\le f(\me_-)$, 
\item $g(\me_+)\le f(\me_+)$ and $f(\me_-)\le g(\me_-)$,
\item $f(\me_+)\le g(\me_+)$ and $g(\me_-)\le f(\me_-)$. 
\end{itemize}
The assertion clearly holds in the first two cases. In order to check its validity in the third and fourth case, we prove that for all $x,y,w,z\in \mathbb R$ with $x\ge y$ and $w\ge z$
\[
|y-z|^p+|x-w|^p \le |x-z|^p + |y-w|^p,
\]
i.e.,
\[
|k+\alpha|^p+|k-\alpha|^p \ge |k+\beta|^p + |k-\beta|^p,
\]
where 
\[
k:=\frac{x+y-w-z}{2},\qquad \alpha:=\frac{x-y+w-z}{2},\qquad \beta:=\frac{-x+y+w-z}{2},
\]
In fact, if $x\ge y$ and $w\ge z$, then
\[
y-x+w-z\le y-x-w+z \le x-y-w+z,
\]
i.e., $|\alpha|>|\beta|$ and the assertion follows from Lemma~\ref{almostconv}. The other cases can be treated likewise.

Let us now prove that the semigroups are $\|\cdot\|_{\ell^\infty_d}$-contractive. First of all, because $\Efun_p$ is homogeneous (of degree $p$), by Proposition~\ref{Bar96} we have to prove the condition corresponding to~\eqref{cipgrieq2}, i.e.,  that for all $f,g \in w^{1,p,2}_{a,d}(\mV)$
\begin{eqnarray*}
&&\Efun_p \left(\frac{g+(f-g+1)_+}{2} + \frac{g-(f-g-1)_-}{2}\right)\\
&&\qquad\quad+\Efun_p \left(\frac{f-(f-g+1)_+}{2} + \frac{f+(f-g-1)_-}{2}\right)\\
&&\quad \le \Efun_p (f) + \Efun_p (g).
 \end{eqnarray*}
 As above, it suffices to take one $\me\in \mE$ and to check that for all $f,g \in w^{1,p,2}_{a,d}(\mV)$
\begin{eqnarray*}%\label{cipgrieq2appl}
&&\left|\mathcal I^T \frac{g+(f-g+1)_+}{2}(\me) + \mathcal I^T \frac{g-(f-g-1)_-}{2}(\me)\right|^p\\
&&\qquad\quad +\left|\mathcal I^T \frac{f-(f-g+1)_+}{2}(\me) + \mathcal I^T \frac{f+(f-g-1)_-}{2}(\me)\right|^p\\
&&\quad \le |\mathcal I^T f(\me)|^p + | \mathcal I^T g(\me)|^p.
 \end{eqnarray*}
or equivalently that  for all $x,y,w,z\in \mathbb R$
%% \begin{eqnarray*}%\label{cipgrieq2appl-b}
%% &&\left| \frac{g+(f-g+1)_+}{2}(\me_+)- \frac{g+(f-g+1)_+}{2}(\me_-) + \frac{g-(f-g-1)_-}{2}(\me_+)- \frac{g-(f-g-1)_-}{2}(\me_-)\right|\\
%% &&\qquad +\left| \frac{f-(f-g+1)_+}{2}(\me_+)- \frac{f-(f-g+1)_+}{2}(\me_-) + \frac{f+(f-g-1)_-}{2}(\me_+)- \frac{f+(f-g-1)_-}{2}(\me_-)\right|\\
%% & \leq& | f(\me_+)-f(e_1)|^p + | g(\me_+)-g(\me_-)|^p.
%% \end{eqnarray*}
\begin{eqnarray*}%\label{cipgrieq2appl-b}
&&\left| \frac{y+(x-y+1)_+}{2}- \frac{z+(w-z+1)_+}{2} + \frac{y-(x-y-1)_-}{2}- \frac{z-(w-z-1)_-}{2}\right|^p\\
&&\qquad\quad +\left| \frac{x-(x-y+1)_+}{2}- \frac{w-(w-z+1)_+}{2} + \frac{x+(x-y-1)_-}{2}- \frac{w+(w-z-1)_-}{2}(\me_-)\right|^p\\
&&\quad \le | x-w|^p + | y-z|^p.
 \end{eqnarray*}
Proving this inequality in the nine possible cases
\begin{itemize}
\item $|x-y|\le 1$ and $|w-z|\le 1$, 
\item $|x-y|\le 1$ and $w-z\ge 1$, 
\item $|x-y|\le 1$ and $w-z\le -1$, 
\item $x-y\le -1$ and $|w-z|\le 1$, 
\item $x-y\le -1$ and $w-z\ge 1$, 
\item $x-y\le -1$ and $w-z\le -1$, 
\item $x-y\ge 1$ and $|w-z|\le 1$, 
\item $x-y\ge 1$ and $w-z\ge 1$, 
\item $x-y\ge 1$ and $w-z\le -1$, 
\end{itemize}
is tedious but not difficult. E.g., in the second case (the first being trivial) one has to check that
\[
\left| x- \frac{w-z+1}{2}\right|^p +\left|y- \frac{w+z-1}{2}\right|^p\le | x-w|^p + | y-z|^p.%,\qquad \hbox{for all }x,y,w,z\in \mathbb R
\]
Also in this case, this condition can be re-written as
\[
|k+\alpha|^p+|k-\alpha|^p \ge |k+\gamma|^p + |k-\gamma|^p,
\]
where again
\[
k:=\frac{x+y-w-z}{2},\qquad \alpha:=\frac{-x+y+w-z}{2},
\]
and
\[
\gamma:=\frac{x-y-1}{2}.
\]
Under the assumption that ($|x-y|\le 1$ and) $w-z\ge 1$, one has 
\[
x-y-w+z\le x-y-1\le y-x+w-z, 
\]
and hence $|\gamma|<|\alpha|$, whence the assertion follows, by Lemma~\ref{almostconv}.
% \Efun_p (u) + \Efun_p (\mv)\quad\hbox{for all } u,v \in w^{1,p,2}_a,
%p \Efun_p (P_1(u,v)) &=&\sum_{\me \in \mE_1} a(e)|{\mathcal I}^T P_1(u,v)	

Finally, let us show the assertion about irreducibility. 
All ideals of $\ell^2_d(\mV)$ are of the form $\ell^2_d(\mV_0)\equiv \{f\in \ell^2_d(\mV): {\rm supp }
f\subset \mV_0\}$ for some subset $\mV_0\subset \mV$, and the associated orthogonal projections are given by the restriction operators $P_{\mV_0}:={\mathbf 1}_{\mV_0}\cdot$. 

Now, assume $\ell^2_d(\mV_0)$ to be invariant under $(e^{-t\Delta_{p}})$, or equivalently {(by Proposition~\ref{Bar96})} that
\begin{equation}\label{ouha_irred}
\Efun_p (P_{\mV_0}f)\le \Efun_p (f)\qquad \hbox{for all }f\in \ell^2_d(\mV).
\end{equation}
We have to show that $\mV_0$ is a trivial subset of $\mV$, i.e., $\mV_0 =\mV$ or $\mV_0=\emptyset$. 

In fact, if $\mV_0 \not=\mV\not=\mV_0^C$ there are two adjacent nodes $\mv_0\in \mV_0$ and $\mv_1\in \mV_0^C$.
Set
\[
\tilde{\mV}:=\mV\setminus\{\mv_0,\mv_1\},
\]
so that $\mE$ is partitioned into $(\mv_0,\mv_1),\mE^0,\mE^1,\tilde{\mE}$, where for $i\in \{0,1\}$ $\mE^{i}$ consist of those edges \emph{other than $(\mv_0,\mv_1)$} one of whose endpoints is $\mv_i$ (regardless of their orientation), and $\tilde{\mE}:=\mE\setminus((\mv_0,\mv_1)\cup \mE^0\cup \mE^1)$.  In other words,
\begin{eqnarray*}
p\Efun_p (g)&=&a (\mv_0,\mv_1)|g(\mv_0)-g(\mv_1)|^p+ \sum_{\substack{\mw\sim \mv_0 \\ \mv\neq \mv_1}} a (\mv_0,\mw)|g(\mv_0)-g(\mw)|^p\\
&&+\sum_{\substack{\mw\sim \mv_1 \\ \mv\neq \mv_0}} a (\mv_1,\mw)|g(\mv_1)-g(\mw)|^p + \sum_{\me \in \tilde{\mE}} a (\me)|g(\me_+)-g(\me_-)|^p\qquad  \hbox{for all }g\in w^{1,p,2}_{a,d}(\mV).
\end{eqnarray*}

Let now $f\in \ell^2_d(\mV)$ be defined by
\[
f(\mv):=
\left\{
\begin{array}{ll}
x,\qquad & \hbox{if }\mv= \mv_0,\\
1,& \hbox{if }\mv= \mv_1,\\
0,& \hbox{otherwise},
\end{array}
\right.
\]
for some $x>0$ to be determined later.
Accordingly, 
\[
P_{\mV_0} f(\mv):=
\left\{
\begin{array}{ll}
x,\qquad & \hbox{if }\mv=\mv_0,\\
0,& \hbox{otherwise}.
\end{array}
\right.
\]
Therefore,
\[
p\Efun_p (f)
%&=&a (\mv_0,\mv_1)|f(\mv_0)-f(\mv_1)|^p+ \sum_{\substack{\mw\sim \mv_0 \\ \mv\neq \mv_1}} a (\mv_0,\mw)|f(\mv_0)-f(\mw)|^p\\
%&&+\sum_{\substack{\mw\sim \mv_1 \\ \mv\neq \mv_0}} a (\mv_1,\mw)|f(\mv_1)-f(\mw)|^p + \sum_{\me \in \tilde{\mE}} a (\me)|f(\me_+)-f(\me_-)|^p\\
=a (\mv_0,\mv_1)|x-1|^p+ \sum_{\substack{\mw\sim \mv_0 \\ \mv\neq \mv_1}} a (\mv_0,\mw)|x|^p+\sum_{\substack{\mw\sim \mv_1 \\ \mv\neq \mv_0}} a (\mv_1,\mw).
\]
(observe that the sums on the RHS are finite, because they are less then ${\rm deg}(\mv_0)$ and ${\rm deg}(\mv_1)$, respectively -- recall that $\mG$ is assumed to be locally finite) whilst
\[
p\Efun_p (P_{\mV_0} f)
%&=&a (\mv_0,\mv_1)|f(\mv_0)-f(\mv_1)|^p+ \sum_{\substack{\mw\sim \mv_0 \\ \mv\neq \mv_1}} a (\mv_0,\mw)|f(\mv_0)-f(\mw)|^p\\
%&&+\sum_{\substack{\mw\sim \mv_1 \\ \mv\neq \mv_0}} a (\mv_1,\mw)|f(\mv_1)-f(\mw)|^p + \sum_{\me \in \tilde{\mE}} a (\me)|f(\me_+)-f(\me_-)|^p\\
=a (\mv_0,\mv_1)|x|^p+ \sum_{\substack{\mw\sim \mv_0 \\ \mv\neq \mv_1}} a (\mv_0,\mw)|x|^p.
\]
Accordingly, 
\[
p\Efun_p(f)-p\Efun_p(P_{\mV_0}f)=a (\mv_0,\mv_1)\left(|x-1|^p -|x|^p\right)+ \sum_{\substack{\mw\sim \mv_1 \\ \mv\neq \mv_0}} a (\mv_1,\mw)<0,
\]
choosing $x$ large enough, thereby contradicting~\eqref{ouha_irred}. This implies that indeed $\mV_0=\emptyset$ or {$\mV_0=\mV$}.

%% Let now $\mG$ have finite surface and~\eqref{ouha_irred} hold. Then it suffices to consider $f:\equiv 1$, since then
%% \[\Efun_p (P_{\mV_0}f)\le 0=\Efun_p (f),
%% \]
%% and hence by positive definiteness
%% \[
%% P_{\mG_0}f\equiv 0.
%% \]
%% This implies that $\mG_0=\emptyset$ or $\mG_0=\mG$ -- because otherwise there would be at least one edge connecting $\mV_0$ and its complement $\mV_0^C$ yielding
%% \[
%% 1\le \Efun_p (P_{\mG_0}f).
%% \]

Conversely, each subspace of $\ell^2_d(\mV)$ consisting of functions over a connected component is apparently left invariant under $(e^{-t\Lfun^N_p})_{t\ge 0}$, {hence the semigroup would be reducible if $\mG$ contained more than one connected components}.
\end{proof}

\begin{rem}\label{beurden-restr}
{
Observe that our proof of Proposition~\ref{thm:cipgri} yields that the same assertions hold for the semigroup generated by (minus) the subdifferential associated with any restriction of $\Efun^N_p$ and $\Efun^D_p$. In particular, the restriction of both $\Efun^N_p$ and $\Efun^D_p$ to any induced subgraph is associated with a submarkovian semigroup.}

{
(Let us emphasize that the subdifferential of the restriction $\Efun^N_p$ to an induced subgraph is in general different from the restriction to the same induced subgraph of the subdifferential of $\Efun^N_p$, and the same holds for $\Efun^D_p$).}
\end{rem}

\begin{cor}\label{ralphwell-local}
Let $p\in (1,\infty)$. Both $C_0$-semigroups $(e^{-t\Lfun^N_p})_{t\ge 0}$ and $(e^{-t\Lfun^D_p})_{t\ge 0}$ extrapolate to a family of nonlinear semigroups on $\ell^q_d(\mV)$ for all $q\in [1,\infty]$ as well as on
\[
c_{0d}(\mV):=\{f:\mV\to \mathbb R:\forall \varepsilon>0 \; \exists \mW \subset \mV, |\mW|<\infty, \hbox{ s.t. }|f(\mv)|d(\mv)<\varepsilon\; \forall \mv\not\in \mW \}.
%c_{0d}(\mV):=\{f:\mV\to \mathbb R:\lim_{n\to \infty}f(\mv_n)d(\mv_n)=0\}.
\]
\end{cor}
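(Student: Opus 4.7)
The plan is to deduce everything from Proposition~\ref{thm:cipgri}, which already provides order preservation and $\|\cdot\|_{\ell^\infty_d}$-contractivity of both $C_0$-semigroups. The strategy is the nonlinear analogue of the Beurling--Deny extrapolation scheme: a nonlinear submarkovian semigroup on $\ell^2_d(\mV)$ generated by the subdifferential of a convex, lower semicontinuous functional admits, by~\cite{CipGri03}, a consistent family of extensions to nonlinear contraction semigroups on each $\ell^q_d(\mV)$, $q\in[1,\infty]$. I would appeal to that general machinery and then handle $c_{0d}(\mV)$ by hand.

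Concretely, I would first translate, via Proposition~\ref{Bar96}, contractivity with respect to $\|\cdot\|_{\ell^q_d}$ into invariance of the closed convex ball $B^q_r:=\{g\in\ell^2_d(\mV):\|g\|_{\ell^q_d}\le r\}$ under the resolvents. For $q\in(1,\infty)$ the projection $P$ onto $B^q_r$ in $\ell^2_d(\mV)$ is, edgewise, a normal contraction, so that
\[
\bigl|(\mathcal{I}^T Pf)(\me)\bigr|\le \bigl|(\mathcal{I}^T f)(\me)\bigr|\qquad \text{for all }\me\in\mE,
\]
which immediately yields $\Efun_p(Pf)\le\Efun_p(f)$ and hence $\ell^q_d$-contractivity on $w^{1,p,2}_{a,d}(\mV)\cap\ell^q_d(\mV)$ (respectively on $\mathring{w}^{1,p,2}_{a,d}(\mV)\cap\ell^q_d(\mV)$); the extension to all of $\ell^q_d(\mV)$ then follows by density. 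The $q=\infty$ case is already covered by Proposition~\ref{thm:cipgri}; the endpoint $q=1$ is most cleanly obtained by dualising the order-preservation/$\ell^\infty$-contractivity pair along the Bénilan--Crandall scheme (equivalently, by testing invariance of the translated positive cones $\{g:g\ge f\}$ with $f\in\ell^1_d\cap\ell^2_d$, since $\ell^1_d$-contractivity of a monotone semigroup is equivalent to preservation of integrals of differences).

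For $c_{0d}(\mV)$ the obstacle is that this space is not a principal ideal of $\ell^2_d(\mV)$, so Proposition~\ref{Bar96} does not apply verbatim. I would proceed via an approximation argument based on the exhaustion $(\mG_n)_{n\in\mathbb N}$ of $\mG$ from Theorem~\ref{thm:main}(4): by Remark~\ref{beurden-restr} the associated truncated semigroups $(e^{-t\Lfun^{(n)}_p})_{t\ge 0}$ are themselves submarkovian, they act on finite-dimensional spaces and hence automatically leave $c_{0d}(\mV)$ invariant, and Theorem~\ref{thm:main}(5) provides $L^\infty(\mathbb R_+;c_{0d}(\mV))$-weak$^*$ convergence of their orbits to those of $(e^{-t\Lfun^D_p})_{t\ge 0}$ for finitely supported initial data; combined with uniform $\ell^\infty_d$-contractivity (Proposition~\ref{thm:cipgri}) this forces the limiting orbit to vanish at infinity. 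A density argument in $(c_{0d}(\mV),\|\cdot\|_{\ell^\infty_d})$, together with $\ell^\infty_d$-contractivity, then extends the action to all of $c_{0d}(\mV)$. The Neumann case $(e^{-t\Lfun^N_p})_{t\ge 0}$ is treated identically after noting that the orbits of finitely supported data again remain in $c_{0d}(\mV)$, either by monotone comparison with the Dirichlet semigroup or by the same truncation argument applied to restrictions of $\Efun^N_p$.

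The main obstacle I expect is precisely this last invariance of $c_{0d}(\mV)$: everything else reduces to edgewise estimates on $|\mathcal{I}^T f|$, whereas the decay-at-infinity property is a genuinely nonlocal feature that has to be transported from the finite induced subgraphs to $\mG$.
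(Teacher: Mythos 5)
There is a genuine gap at the heart of your argument for $q\in(1,\infty)$. What Proposition~\ref{Bar96} gives you from invariance of the balls $B^q_r$ is only the one-point estimate $\|e^{-t\Lfun_p}f\|_{\ell^q_d}\le\|f\|_{\ell^q_d}$ along orbits starting in $\ell^2_d(\mV)\cap\ell^q_d(\mV)$. For a \emph{nonlinear} semigroup this is strictly weaker than the two-point contractivity $\|e^{-t\Lfun_p}f-e^{-t\Lfun_p}g\|_{\ell^q_d}\le\|f-g\|_{\ell^q_d}$, and it is the latter that you need in order to extrapolate, i.e.\ to extend the semigroup by density and uniform continuity from $\ell^2_d\cap\ell^q_d$ to all of $\ell^q_d(\mV)$ (mere norm boundedness does not make $e^{-t\Lfun_p}f_n$ Cauchy when $f_n\to f$ in $\ell^q_d$). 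This distinction is visible already in Lemma~\ref{Bar96} itself: $\ell^\infty_d$-contractivity is characterized there by a genuinely two-function inequality, not by invariance of $\ell^\infty$-balls. So your edgewise estimate $|\mathcal I^T P f|\le|\mathcal I^T f|$ (which is fine as far as it goes, the projection onto the $\ell^q_d$-ball being given by a common $1$-Lipschitz scalar map) does not deliver the statement. The paper avoids this issue entirely: from Proposition~\ref{thm:cipgri} the semigroup is $\|\cdot\|_{\ell^\infty_d}$-contractive, hence extends by uniform continuity to the closure of $\ell^2_d(\mV)$ in the $\ell^\infty_d$-norm, i.e.\ into $c_{0d}(\mV)$; duality then produces a contraction semigroup on $\ell^1_d(\mV)$, duality once more one on $\ell^\infty_d(\mV)$, and the intermediate spaces $\ell^q_d(\mV)$ are obtained from Browder's nonlinear Riesz--Thorin-type interpolation theorem, in the form of~\cite[Thm.~3.6]{CipGri03}. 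Your treatment of $q=1$ by duality is in the same spirit as the paper's, but without the interpolation step the range $q\in(1,\infty)$, $q\neq 2$, remains unproved in your proposal.

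Your handling of $c_{0d}(\mV)$ is also more problematic than necessary. Appealing to Theorem~\ref{thm:main}(4)--(5) is logically awkward, since in the paper part (5) of that theorem is itself established ``by the same arguments used in the proof of Corollary~\ref{ralphwell-local}''; moreover, weak$^*$ convergence of a subsequence of Galerkin orbits does not by itself show that the limiting orbit takes values in $c_{0d}(\mV)$ for (almost) every $t$, nor does it define the semigroup on all of $c_{0d}(\mV)$. The short route is the one above: $\ell^\infty_d$-contractivity on $\ell^2_d(\mV)$ plus density of $\ell^2_d(\mV)$ (indeed of $c_{00}(\mV)$) in $(c_{0d}(\mV),\|\cdot\|_{\ell^\infty_d})$ already yields the extension to $c_{0d}(\mV)$, for the Dirichlet and the Neumann semigroup alike.
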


Formally speaking, Corollary~\ref{ralphwell-local} yields well-posedness of a dynamical system only if one is able to determine the generators of the extrapolated semigroups. In the linear case of $p=2$, this can be done using~\cite[\S~II.2.3]{EngNag00} on $c_{0d}(\mV)$ and on $\ell^p_q(\mV)$ for $q\in [1,2)$, and by duality on $\ell^q_d(\mV)$ for $q\in (2,\infty]$. In the nonlinear case of $p\neq 2$ this technique seems to fail -- this is the reason why Theorem~\ref{thm:main}.(5) is relevant.

\begin{proof}
By Proposition~\ref{thm:cipgri}, contractivity of $(e^{-t\Lfun^N_p})_{t\ge 0}$ with respect to the norm of $\ell^\infty_d(\mV)$ yields that the semigroup on $\ell^2_d(\mV)$ extends to a contractive semigroup on the closure of $\ell^2_d(\mV)$ in the $\ell^\infty_d(\mV)$-norm, i.e., in $c_{0d}(\mV)$. By duality we obtain a contractive semigroup on $\ell^1_d(\mV)$, and finally again by duality a contractive semigroup on $\ell^\infty_d(\mV)$. Now, the assertion follows applying the non-linear Riesz--Thorin-type interpolation theorem due to Browder, see~\cite[Thm.~3.6]{CipGri03} for a version tailored for our setting. 
\end{proof}

We can finally procede to the proof of our main result.

\begin{proof}[Proof of Theorem~\ref{thm:main}]
{
Having proved  in Lemmas~\ref{propw12p} and~\ref{propfunc} that both triples
\begin{itemize}
\item $w^{1,p,2}_{a,d}(\mV)$, $\ell^2_d(\mV)$, and $\Efun^N_p$, and
\item $\mathring{w}^{1,p,2}_{a,d}(\mV)$, $\ell^2_d(\mV)$, and $\Efun^D_p$
\end{itemize} 
satisfy for all $p\in (1,\infty)$ the Assumptions~\ref{assum:append}, the assertions in (1), (2), and (3) follow directly from Theorem~\ref{thm:brezkato}.
}

{
(4)  Consider the sequence of finite dimensional subspaces of $\ell^2_d(\mV)$ of the form 
\[
\ell^2_d(\mV_n)\equiv\{f:\in \ell^2_d(\mV): f(\mv_m)=0\; \forall m>n\},
\]
where $\mV_n:=\{\mv_1,\ldots,\mv_n\}$. Hence $\ell^2_d(\mV_n)\subset c_{00}(\mV)$ for all $n\in \mathbb N$: This defines a total sequence in $\mathring{w}^{1,p,2}_{a,d}(\mV)$. Let us introduce an orientation of $\mG$ as follows: an edge $\me=(\mv_i,\mv_j)$ is always outgoing from the node with lower index. Let us denote by $\mE_n$ the subset of $\mE$ of all those edges with both endpoints in $\mV_n$, and by $\mE_n^+$ the subset of $\mE$ of all those edges with exactly one endpoint in $\mV_n$ (this is sometimes called the \emph{boundary} of $\mV_n$ in graph theory).
In this way we can describe the subdifferential of $\Efun^{(n)}_p:=\Efun_{p|_{\ell^2_d(\mV_n)}}$. Indeed, a  computation similar to that in the proof of Lemma~\ref{identlp} yields for all $f\in \ell^2_d(\mV_n)$ 
\begin{eqnarray*}
\partial\Efun^{(n)}_p (f)(\mv)&=&\frac{1}{d(\mv)}\sum_{\me\in \mE} \iota_{\mv\me} \left( a(\me)\left|f(\me_+)-f(\me_-)\right|^{p-2}\left(f(\me_+)-f(\me_-)\right)\right)\\
&=&\frac{1}{d(\mv)}\sum_{\me\in \mE_n} \iota_{\mv\me} \left( a(\me)\left|f(\me_+)-f(\me_-)\right|^{p-2}\left(f(\me_+)-f(\me_-)\right)\right)+ \frac{1}{d(\mv)}\sum_{\me\in \mE_n^+} \iota_{\mv\me} \left( a(\me)\left|f(\me_+)\right|^{p-2}\left(f(\me_+)\right)\right)\\
%&=&\frac{1}{d(\mv)}\sum_{\substack{\mw\in \mV_n\\ \mw \sim \mv}} \iota_{\mv\me} \left( a(\mv,\mw)\left|f(\mv)-f(\mw)\right|^{p-2}\left(f(\mv)-f(\mw)\right)\right)+ \frac{1}{d(\mv)}\sum_{\me\in \mE_n} \iota_{\mv\me}^+ \left( a(\me)\left|f(\mv)\right|^{p-2}\left(f(\mv)\right)\right)\\
&=&\frac{1}{d(\mv)}\sum_{\substack{\mw\in \mV_n\\ \mw \sim \mv}} a(\mv,\mw)\left|f(\mv)-f(\mw)\right|^{p-2}\left(f(\mv)-f(\mw)\right)+ \frac{1}{d(\mv)}\left|f(\mv)\right|^{p-2}\left(f(\mv)\right)\sum_{\substack{\mw\not\in \mV_n\\ \mw \sim \mv}} a(\mv,\mw)
\end{eqnarray*}
if $\mv\in \mV_n$, and 
\[
\partial\Efun^{(n)}_p (f)(\mv)=-\frac{1}{d(\mv)}\left|f(\mv)\right|^{p-2}\left(f(\mv)\right)\sum_{\substack{\mw\in \mV_n\\ \mw \sim \mv}} a(\mv,\mw)
\]
if instead $\mv\in \mV\setminus \mV_n$ (in particular, $\partial\Efun^{(n)}_p (f)(\mv)=0$ if $\mv$ is not adjacent to any node in $\mV_n$).
%(The first addend of the last term can be regarded as the discrete $p$-Laplacian on the subgraph induced by $\mV_n$, whereas the second one is a kind of (nonlinear) potential).
Hence, the associated Cauchy problem is $\rm(HE^{(n)}_p)$. Adapting to the present setting the usual arguments that yield convergence of the Galerkin scheme, cf.\ Theorem~\ref{theo:galerkinsch}, one sees that the sequence of solutions of $\rm(HE^{(n)}_p)$ converges (up to subsequences) to the solution found in (2) -- weakly in $H^1(0,T;\ell^2_d(\mV))$ and weakly$^*$ in $L^\infty(0,T;w^{1,p,2}_{a,d}(\mV))$.
}

{
(5) Let now $f\equiv 0$. Then for all $n\in \mathbb N$ the solution to each $\rm(HE^{(n)}_p)$ is given by a $C_0$-semigroup of nonlinear contractions. Because the restricted energy functional $\Efun_p^{(n)}$ is a proper, convex and lower semicontinuous functional, (minus) its subdifferential generates a semigroup on $\ell^2_d(\mV_n)$, which is $\|\cdot\|_{\ell^\infty_d}$-contractive by Proposition~\ref{thm:cipgri}. 
%By construction, $-\Delta^{(n)}_p$ solves the problem $\rm(HE^N_p)$ with Dirichlet boundary conditions on $\mV_n$. 
By the same arguments used in the proof of Corollary~\ref{ralphwell-local}, this semigroup extrapolates to $c_{0d}(\mV_n)$ as well as to $\ell^q_d(\mV_n)$ for all $q\in [1,\infty]$. 
In particular, for all $f_0\in \ell^q_d(\mV)$
\[
\|e^{-t\mathcal L^{(n)}_p} f_0\|_{\ell^q_d(\mV_n)}\le\|f_{0_{|\mV_n}}\|_{\ell^q_d(\mV_n)}\le \|f_0\|_{\ell^q_d(\mV)}\qquad \hbox{for all }t\ge 0,
\]
due to the fact that $\Efun_p^{(n)}(0)=0$ and hence $e^{-t\mathcal L^{(n)}_p}0=0$ (cf.\ Remark~\ref{onecannot}.(1)), and because of contractivity of the semigroup. Because for all $q\in (1,\infty)$ $L^\infty(0,\infty;\ell^q_d(\mV))=L^1(0,\infty;\ell^{q'}_d(\mV))'$ is the dual of a separable Banach space, by the theorem of Banach--Alaoglu for all $t>0$ the bounded sequence $(e^{-t\mathcal L^{(n)}_p} f_0)_{n\in \mathbb N}\subset L^\infty(0,\infty;\ell^q_d(\mV))$ has a weak$^*$-convergent subsequence. 
% {COMPLETE: l'idea \`e che $Tr(\ell^2,w^{1,p,2})=\mathbb R^n$ e quindi $L^\infty(0,\infty;\mathbb R^n)\hookrightarrow H^1(0,\infty;\ell^2)\cap L^\infty(0,\infty;w^{1,p,2})$.}
}
\end{proof}

\begin{rem}\label{onecannot}
 {(1) In view of Proposition~\ref{Bar96}, and because 
 \[
 ({\rm Id}+\lambda \partial \Efun_p)(\alpha f)=\alpha({\rm Id}+|\alpha|^{p-2}\lambda\partial \Efun_p) (f)\qquad \hbox{for all } \alpha \in \mathbb R,
 \] 
 one cannot in general expect the mappings $J_\lambda(\Efun_p)$ and hence $e^{-t\Lfun^N_p}$ or $e^{-t\Lfun^D_p}$  to be homogeneous of any degree, for any $p \neq 2$, any $\lambda>0$ and any $t>0$. However, $e^{-t\Lfun^N_p} =e^{-t\Lfun^D_p} 0=0$ because $\Efun_p(0)=0$. This shows in particular that one can plug $f=0$ in~\eqref{eq:order-pres} and deduce from Proposition~\ref{thm:cipgri} that for all $p\in (1,\infty)$ $(e^{-t\Lfun^N_p})_{t\ge 0}$ and $(e^{-t\Lfun^D_p})_{t\ge 0}$ are positivity preserving, too. }

(2) Furthermore, Proposition~\ref{thm:cipgri} yields the following comparison result:
Let $\lambda>0$ and $p\in (1,\infty)$. If $\varphi_1,\varphi_2$ are the solutions of the elliptic equation with inhomogeneous data $f_1,f_2$ and if $f_1\ge f_2$, then also the solutions $\varphi_1,\varphi_2$ of the respective elliptic problems of the type
\begin{equation}\label{elliptdeltap}
\lambda \varphi(\mv)+\Lfun^N_p \varphi(\mv)= f(\mv),\qquad \mv\in \mV,
\end{equation}
satisfy $\varphi_1\ge \varphi_2$. If in particular $f\ge 0$, then the solution $\varphi$ of~\eqref{elliptdeltap} satisfies $\varphi\ge 0$. These results are related to some comparison principles that are known to hold on finite unweighted graphs, cf.~\cite{JiaChuReg04}.

{
(3) Let $p\in  (1,\infty)$. The set of eigenvectors of $\Lfun^N_p $ for the eigenvalue 0 is a subspace of $\ell^2_d(\mV)$ spanned by the characteristic vectors of the connected component of $\mG$ with finite surface, in the sense of Definition~\ref{defi:basic}. In fact, if $f\in \ell^2_d(\mV)$ is an eigenfunction of $\Lfun^N_p $ with eigenvalue 0, then $\mathcal I^T f=0$. By Remark~\ref{rem:bije} this implies $f=0$ if and only if each connected component of $\mG$ has infinite surface.
}

{
(4) Let $\mG$ be finite, so that $(HE^D_p)=(HE^N_p)$. Applying~\eqref{eq:degiorgi} we find that for all $p>1$ and all $f_0\in w^{1,p,2}_{a,d}(\mV)$ the solutions to $\rm(HE_p)$ either converge in finite time towards a constant function (which by (3) in the finite case are the only ground states), or else have ever decreasing energy. Of course, constant functions are not interesting for applications -- say, to clustering problems. However, it is not clear whether one can reach another, non trivial state, perhaps associated to the second eigenvalue, as a different equilibrium of the discrete $p$-heat equation. The discussion in~\cite{BuhHei09} and in particular in~\cite[\S~3]{BuhHei10} seems to suggest that a nontrivial eigenvector of the $p$-Laplacian could be obtained studying the $p$-heat equation on some suitable submanifold. This topic will be discussed in a forthcoming paper.
}
\end{rem}

\section{Symmetries}\label{sec:symm}

Graphs bearing some symmetry are well-studied objects of graph theory. In this section we are going to show how  certain non-trivial symmetries of the discrete $p$-heat equation arise if the underlying graph enjoys special symmetry properties.  {For the sake of simplicity, in this section we state all results solely for the Neumann $p$-Laplacian. Furthermore, we assume throughout this section that 
\[
a\equiv\mu\quad \hbox{ as well as }\quad d\equiv\nu.
\]
(i.e., $\Lfun^N_p\equiv \Delta_p$ as in~\eqref{beltr-eq}).}

\begin{defi}
A permutation $O$ on $\mV$ is called a \emph{node automorphism} of $\mG=(\mV,\mE,a,d)$ if for all $\mv,\mw\in \mV$
\begin{itemize}
\item $d(O\mv)=d(\mv)$ and
\item the entries of the adjacency matrix $\mathcal A$ introduced in~\eqref{adjdefi} satisfy $\alpha_{O\mv\;O\mw}=\alpha_{\mv\mw}$ (i.e., $(O\mv,O\mw)\in \mE$ or $(O\mw,O\mv)\in \mE$ if and only if $(\mv,\mw)\in \mE$ or $(\mw,\mv)\in \mE$) and furthermore $a(O\mv,O\mw)=a(\mv,\mw)$ whenever $\alpha_{\mv,\mw}\neq 0$.
\end{itemize}
We denote by ${\rm Aut}(\mG)$ the group of all node automorphisms of $\mG$. 
\end{defi}

In the unweighted case, the above definition reduces to the usual one: Node automorphisms are permutations on $\mV$ that preserve the adjacency relation. This definition does not depend on the orientation of $\mG$.
%Asymptotically speaking, almost every (labeled) graph is asymmetric, i.e., its automorphism group is trivial, see e.g.~\cite[\S~9.1]{Bol01}; and yet, 

Observe that each node automorphism $O$ induces a permutation $O_L$ on $\mE$ by
$$O_L \me=(O\mv,O\mw)\qquad \hbox{if }\me=(\mv,\mw).$$
Thus, each edge permutation $O$ defines a mapping on $\mathbb R^\mV$ and a mapping on $\mathbb R^\mE$ defined as the Nemitskii operators associated with $O$ and $O_L$, respectively, i.e., 
\[
f\mapsto f(O\cdot)\qquad \hbox{and}\qquad u\mapsto u(O_L\cdot).
\]
With a slight abuse of notation, we will not distinguish between the node/edge permutations and their associated Nemitskii operators.

\begin{theo}
Let $p\in (1,\infty)$. If $O\in {\rm Aut}(\mG)$, then
$$e^{-t\Delta_p}O^k = O^k e^{-t\Delta_p}\qquad\hbox{for all } t\ge 0\hbox{ and }k\in \mathbb N.$$
\end{theo}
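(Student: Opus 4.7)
The strategy is to exhibit $O$ as a linear isometry of $\ell^2_d(\mV)$ that leaves the energy functional invariant, and then transport the Cauchy problem $\rm(HE^N_p)$ by $O$ and invoke uniqueness.

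First, I would record the functional properties of the Nemitskii operators. Since $d(O\mv)=d(\mv)$ for every $\mv\in\mV$, the change-of-variables $\mv\mapsto O\mv$ shows at once that $f\mapsto f\circ O$ is a linear bijective isometry of $\ell^q_d(\mV)$ for every $q\in[1,\infty]$; in particular, it is a surjective linear isometry of the Hilbert space $\ell^2_d(\mV)$. In parallel, $O_L$ is a bijection of $\mE$ and $a(O_L\me)=a(\me)$ by the second axiom of a node automorphism (recall $a\equiv\mu$), so $u\mapsto u\circ O_L$ is an isometry of $\ell^p_a(\mE)$.

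Next, I would verify that $\Efun^N_p(f\circ O)=\Efun^N_p(f)$ for every $f\in w^{1,p,2}_{a,d}(\mV)$. The key identity is
\[
(\mathcal I^T(f\circ O))(\me)=f(O\me_+)-f(O\me_-)=\pm (\mathcal I^T f)(O_L\me),
\]
where the sign accounts for a possible orientation flip caused by $O_L$. Because $\Efun^N_p$ only depends on the absolute values $|(\mathcal I^T\cdot)(\me)|$, reindexing the defining sum via $\me\mapsto O_L\me$ and using invariance of $a$ yields $\Efun^N_p(f\circ O)=\Efun^N_p(f)$; in particular $O$ leaves $w^{1,p,2}_{a,d}(\mV)$ invariant. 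Combining invariance of $\Efun^N_p$ with the fact that $O$ is a surjective linear isometry of $\ell^2_d(\mV)$, Lemma~\ref{Lemmanittka} (the weak characterization of $\partial\Efun^N_p=\Delta_p$) yields the intertwining relation
\[
O\bigl(D(\Delta_p)\bigr)=D(\Delta_p)\quad\text{and}\quad \Delta_p(Of)=O\Delta_p f\qquad \text{for all }f\in D(\Delta_p);
\]
indeed, if $g=\Delta_p f$ and $h\in w^{1,p,2}_{a,d}(\mV)$, then $(g|O^{-1}h)_{\ell^2_d}=\Efun^{N'}_p(f)(O^{-1}h)=\Efun^{N'}_p(Of)(h)$, while the left hand side equals $(Og|h)_{\ell^2_d}$ by the isometric character of $O$.

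Finally, commutation with the semigroup follows from uniqueness in Theorem~\ref{thm:main}: if $\varphi$ is the solution of $\rm(HE^N_p)$ with $f\equiv 0$ and initial datum $f_0$, then $O\varphi$ lies in $H^1(0,T;\ell^2_d(\mV))\cap L^\infty(0,T;w^{1,p,2}_{a,d}(\mV))$, takes values in $D(\Delta_p)$ for a.e.\ $t$, satisfies $\dot{(O\varphi)}=O\dot\varphi=-O\Delta_p\varphi=-\Delta_p(O\varphi)$ almost everywhere, and has initial datum $Of_0$; hence $O\varphi(t)=e^{-t\Delta_p}Of_0$, i.e., $O e^{-t\Delta_p}=e^{-t\Delta_p}O$ for all $t\ge 0$. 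Iterating $k$ times gives the claim for every $k\in\mathbb N$. The only delicate point is Step~2: handling the possible orientation reversal performed by $O_L$. This is the reason one must work with the energy functional, which depends only on $|\mathcal I^T f|$, rather than with $\mathcal I^T$ directly.
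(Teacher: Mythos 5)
Your proof is correct, but it takes a genuinely different route from the paper. The paper never touches the generator: it sets $\Sigma:=O$ in Corollary~\ref{commut_lemma} (von Neumann's projection formula for the graph of $\Sigma$ combined with Barth\'elemy's invariance criterion, Lemma~\ref{Bar96}), notes $L=\frac12\,{\rm Id}$, $R=\frac12\,{\rm Id}$, and verifies the resulting energy inequality $\Efun_p\bigl(\frac{f+O^*g}{2}\bigr)+\Efun_p\bigl(\frac{Of+g}{2}\bigr)\le\Efun_p(f)+\Efun_p(g)$ by the same change of variables you use plus the convexity estimate $|x+y|^p\le 2^{p-1}(|x|^p+|y|^p)$; this acts directly on the semigroup on $\ell^2_d(\mV)$ and, being an ``energy only'' argument, is the same tool the paper reuses for the averaging projections in Theorem~\ref{theo:equipart}, where the intertwiner is not invertible. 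You instead exploit the group structure: $O$ is a unitary of $\ell^2_d(\mV)$ preserving $\Efun^N_p$, so by the weak characterization of the subdifferential (Lemma~\ref{Lemmanittka}) conjugation by $O$ preserves $D(\Delta_p)$ and $\Delta_p(Of)=O\Delta_p f$, and then uniqueness of solutions transports this to the semigroup. Your route is more elementary (no Barth\'elemy criterion, no von Neumann formula) and yields the stronger fact that the generator itself commutes with $O$; the price is that it uses invertibility and isometry of $O$ in an essential way, so it does not generalize to the non-invertible symmetrizations of Section~\ref{sec:symm}. One small point to patch: uniqueness in Theorem~\ref{thm:main} gives the identity $e^{-t\Delta_p}Of_0=Oe^{-t\Delta_p}f_0$ only for $f_0\in w^{1,p,2}_{a,d}(\mV)$, whereas the semigroup lives on all of $\ell^2_d(\mV)$; either extend by density using that the semigroup operators are contractions and $O$ is an isometry, or invoke uniqueness in Theorem~\ref{thm:brezkato} directly for arbitrary $f_0\in\ell^2_d(\mV)$, observing that $O\varphi$ again has the regularity required there. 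Your handling of the possible orientation flip of $O_L$ (the sign $\pm$ cancels in the energy, and squares away in the derivative pairing) is exactly the point the paper treats implicitly through $a$ being constant along induced orbits.
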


If $\mG$ is unweighted, this result is clear for $p=2$, since then $\Delta_2=D-A$ for a diagonal matrix $D$.

\begin{proof}
Clearly, it suffices to prove the claimed commutation relation for $k=1$. This can be checked owing to Corollary~\ref{commut_lemma}, observing that setting $\Sigma:=O$ one has
\[
L=\frac12 I_{H_1}\qquad \hbox{and}\qquad R=\frac12 I_{H_2}.
\]
Indeed, for all $f,g\in w^{1,p,2}_{a,d}(\mV)$
\begin{eqnarray*}
&&\Efun_p\left(\frac{f+O^*g}{2}\right) +\Efun_p \left(\frac{Of+g}{2}\right)\\
&&\qquad=\frac{1}{p}\sum_{\me\in \mE} a(\me) \left|\frac{({\mathcal I}^T f)(\me)+({\mathcal I}^T O^*g)(\me)}{2}\right|^p+\frac{1}{p}\sum_{\me\in \mE} a(\me) \left|\frac{({\mathcal I}^T g)(\me)+({\mathcal I}^T Of)(\me)}{2}\right|^p.
\end{eqnarray*}
By a change of variables and using the fact that $a$ is constant along induced orbits we obtain
\begin{eqnarray*}
&&\Efun_p\left(\frac{f+O^*g}{2}\right) +\Efun_p \left(\frac{Of+g}{2}\right)\\
&&\qquad=\frac{1}{p}\sum_{O_L \me\in \mE} a(\me) \left|\frac{({\mathcal I}^T Of)(\me)+({\mathcal I}^T g)(\me)}{2}\right|^p+\frac{1}{p}\sum_{\me\in \mE} a(\me) \left|\frac{({\mathcal I}^T Of)(\me)+({\mathcal I}^T g)(\me)}{2}\right|^p\\
&&\qquad=\frac{1}{p}\frac{1}{2^{p-1}}\sum_{\me\in \mE} a(\me) \left|({\mathcal I}^T Of)(\me)+({\mathcal I}^T g)(\me)\right|^p\\
&&\qquad\le\frac{1}{p}\left(\sum_{\me\in \mE} a(\me) \left(\left|({\mathcal I}^T Of)(\me)|^p+|({\mathcal I}^T g)(\me)\right|^p\right)\right)\\
&&\qquad=\frac{1}{p}\left(\sum_{\me\in \mE} a(\me) \left(\left|({\mathcal I}^T f)(\me)|^p+|({\mathcal I}^T g)(\me)\right|^p\right)\right)\\
&&\qquad=\Efun_p(f)+\Efun_p(g),
\end{eqnarray*}
as we wanted to prove.
\end{proof}

\begin{rem}\label{rem:short}
(1) Let $\mG$ have finite surface. Averaging a function $u$ over all nodes (i.e., shorting all nodes, in the point of view of Remark~\ref{rem:electric}) one obtains a system which is trivially left invariant under the evolution of the $p$-heat equation, by Proposition~\ref{Bar96} and because $\Efun_p({\mathbf 1})= 0$. This is independent of the automorphism group of $\mG$, see Remark~\ref{rem:expl}.(2) below.
%that the the subspace of $\ell^2(\mV)$ of constant functions, if non-trivial (i.e., if the graph is finite), is left invariant under $e^{-t\Delta_{p}}$ for all $t\ge 0$ In other words, 

(2) On the other hand, just shorting two arbitrary nodes is not sufficient to obtain an invariant subsystem: this can be easily seen by taking a path of length 3
%\medskip
%\begin{center}
%\begin{tikzpicture}[style=thick, scale=0.6]
%{\draw (0,0) -- (2,0);}
%{\draw (2,0) -- (4,0);}
%{\draw (4,0) -- (6,0);}
%\draw[fill] (0,0) circle (2pt) node[below]{\small $\mv_1$};
%\draw[fill] (2,0) circle (2pt) node[below]{\small $\mv_2$};
%\draw[fill] (4,0) circle (2pt) node[below]{\small $\mv_3$};
%\draw[fill] (6,0) circle (2pt) node[below]{\small $\mv_4$};
%\end{tikzpicture}
%\end{center}
with $a\equiv 1$ and considering the projection $P$ of $\ell^2(\mV)\equiv {\mathbb R}^4$ onto the space 
 \[
 \{f:\{\mv_1,\mv_2,\mv_3,\mv_4\}\to \mathbb R:f(\mv_2)=f(\mv_3)\}.
 \]
 Then, for $f(\mv_n):=n$ one has 
\[
2\left(\frac{3}{2}\right)^p=p\Efun_p(Pf)\not\le p\Efun_p(f)=3,\qquad p> 1,
\]
which by Proposition~\ref{Bar96} shows that said subspace is not invariant under $e^{-t\Delta_{p}}$ for any $t\ge 0$.

%% However, if one shorts $\mv_2,\mv_3$ but also $\mv_1,\mv_4$ and hence considers the projection $P$ of $\ell^2(\mV)$ onto the space 
%% \[
%% \{u:\{\mv_1,\mv_2,\mv_3,\mv_4\}\to \mathbb R:u(\mv_2)=u(\mv_3), u(\mv_1)=u(\mv_4)\},
%% \]
%% then
%% \begin{eqnarray*}
%% p\Efun_p(Pu)&=&2\left(\frac{u(\mv_1)+u(\mv_4)}{2}-\frac{u(\mv_2)+u(\mv_3)}{2}\right)^p\\
%% &\le& |u(\mv_1)-u(\mv_2)|^p + |u(\mv_2)-u(\mv_3)|^3 +|u(\mv_3)-u(\mv_4)|^p\\
%% &=&pg(u).
%% \end{eqnarray*}
\end{rem}

Clearly, each subgroup of ${\rm Aut}(\mG)$ defines a partition of $\mV$ into equivalence classes with respect to their orbits. We denote by $[\mv]$ such orbits and by $|[\mv]|_d$ their lengths with respect to $d$, i.e.,
\[
|[\mv]|_d:=\sum_{\mw \in [\mv]} d(\mw)\equiv d(\mv)|[\mv]|.
\]
{Observe that the length $|[\mv]|_d$ is finite if and only if the set $[\mv]$ is finite. Finiteness of all orbits with respect to a subgroup of ${\rm Aut}(\mG)$ can be characterized e.g.~by means of~\cite[Lemma~1.27]{Woe00}.}

\begin{theo}\label{thm:orbit}
Let $p\in (1,\infty)$. Let $\Gamma$ be a subgroup of ${\rm Aut}(\mG)$ {each of whose orbits is a finite set}. Consider the orbitwise averaging operator $P$ defined by
\[
P f(\mv):=\frac{1}{|[\mv]|_d} \sum_{\mw\in [\mv]} f(\mw)d(\mw),\qquad f\in \ell^2_d(\mV),\; \mv\in\mV.
\]
Then the following assertions hold.
\begin{enumerate}[(1)]
\item The range of $P$ is left invariant under $(e^{-t\Delta^N_p})_{t\ge 0}$.
\item The null space of $P$ is left invariant under $(e^{-t\Delta^N_p})_{t\ge 0}$ if $p=2$.
\end{enumerate}
\end{theo}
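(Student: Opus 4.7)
The operator $P$ is the orthogonal projection in $\ell^2_d(\mV)$ onto the closed subspace of $\Gamma$-invariant functions: indeed $d$ is constant on each orbit (because automorphisms preserve $d$), so $|[\mv]|_d = d(\mv)|[\mv]|$ and $Pf(\mv) = \frac{1}{|[\mv]|}\sum_{\mw \in [\mv]}f(\mw)$, a formula that is manifestly idempotent and symmetric with respect to $(\cdot|\cdot)_{\ell^2_d}$.

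For (1), by Proposition~\ref{Bar96} it suffices to verify that $\Efun_p(Pf) \le \Efun_p(f)$ for all $f \in \ell^2_d(\mV)$. I would partition the (unoriented) edge set into $\Gamma$-orbits and argue orbit by orbit. Fix an edge orbit $[\me_0]$ of cardinality $M$, with endpoints lying in node orbits $[\mv_0]$ and $[\mw_0]$. Since $a$ is constant along $[\me_0]$, this orbit contributes $a(\me_0)\sum_{\me \in [\me_0]}|f(\me_+)-f(\me_-)|^p$ to $p\Efun_p(f)$. Jensen's inequality for $|\cdot|^p$ yields
\[
\sum_{\me \in [\me_0]}|f(\me_+)-f(\me_-)|^p \ge M^{1-p}\Bigl|\sum_{\me\in[\me_0]}\bigl(f(\me_+)-f(\me_-)\bigr)\Bigr|^p,
\]
and an orbit--stabilizer double count (each $\mv' \in [\mv_0]$ appears as the positive endpoint of exactly $M/|[\mv_0]|$ edges of $[\me_0]$, likewise for $[\mw_0]$) collapses the inner signed sum to $M\bigl(Pf(\mv_0)-Pf(\mw_0)\bigr)$. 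The right-hand side becomes $M\bigl|Pf(\mv_0)-Pf(\mw_0)\bigr|^p$, which equals $\sum_{\me\in[\me_0]}|Pf(\me_+)-Pf(\me_-)|^p$ since $Pf$ is orbit-constant. Multiplying by $a(\me_0)$ and summing over edge orbits delivers the desired energy inequality.

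For (2), when $p=2$ the semigroup $(e^{-t\Delta^N_2})_{t\ge 0}$ is linear and self-adjoint on $\ell^2_d(\mV)$. Part~(1) states that $\operatorname{Ran}(P)$ is invariant, equivalently that $e^{-t\Delta^N_2}P = Pe^{-t\Delta^N_2}P$ for all $t\ge 0$. Taking the Hilbert-space adjoint and using self-adjointness of both $P$ and the semigroup gives $Pe^{-t\Delta^N_2} = Pe^{-t\Delta^N_2}P$, and combining the two identities shows that $P$ commutes with the semigroup. Therefore so does $I-P$, the orthogonal projection onto $\operatorname{Ker}(P)$, which is exactly invariance of the null space.

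The main technical subtlety is that an automorphism may reverse edge orientation, so the Jensen step requires committing to a consistent labeling of the two endpoints within each unoriented edge orbit before the double count can be read off cleanly; the degenerate case $[\mv_0]=[\mw_0]$ is harmless because the right-hand side is then zero. It is worth flagging that the argument for (2) depends crucially on linearity and self-adjointness of the semigroup and does not extend to $p\neq 2$, since then there is no reason for invariance of $\operatorname{Ran}(P)$ to pass to its orthogonal complement.
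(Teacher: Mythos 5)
Your argument is correct, but it follows a genuinely different route from the paper's. The paper does not prove Theorem~\ref{thm:orbit} directly: it deduces it from the more general Theorem~\ref{theo:equipart} on almost equitable partitions, after noting (Remark~\ref{rem:expl}) that the orbit partition of any subgroup of ${\rm Aut}(\mG)$ is equitable. There the key structural identity is $\mathcal I^T Pf(\me)=\tilde P_{ij}\mathcal I^T f(\me)$ on each edge cell $\mE_{ij}$, which reduces assertion (1) to Jensen's inequality for the cellwise edge averages $\tilde P_{ij}$ and assertion (2) to the fact that each $\tilde P_{ij}$ is an orthogonal projection on $\ell^2_a(\mE_{ij})$; in other words, the paper verifies the invariance criterion of Proposition~\ref{Bar96} separately for ${\rm Ran}(P)$ and for ${\rm Ker}(P)$. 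You instead work directly with the group action: your decomposition is into $\Gamma$-orbits of unoriented edges, which is in general finer than the cells $\mE_{ij}$, and your double count (each node of $[\mv_0]$ meets the same number $M/|[\mv_0]|$ of edges of the orbit) is in effect an in-line proof that the orbit partition is equitable, so you never need that fact or the more general theorem. Both proofs share the same skeleton -- Proposition~\ref{Bar96} plus Jensen -- so what the paper's route buys is generality (almost equitable partitions need not come from automorphisms, cf.\ Remark~\ref{rem:expl}.(2)), while yours is self-contained for the group case. For (2) your argument is genuinely different and shorter: you deduce invariance of ${\rm Ker}(P)$ from (1) by self-adjointness of $P$ and of the linear semigroup $e^{-t\Delta^N_2}$, instead of re-verifying the energy inequality $\Efun_2(f-Pf)\le\Efun_2(f)$ as the paper does; your closing caveat that this duality argument is confined to $p=2$ is consistent with the counterexample in Remark~\ref{rem:expl}.(1). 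The only point you leave implicit is that each edge orbit is finite, which the Jensen step needs; this does follow from the standing local finiteness, because $a\equiv\mu$ is strictly positive and constant along an orbit all of whose edges meet the finite node orbit $[\mv_0]$ (the paper secures the analogous point through the hypothesis $|\mE_{ij}|_a<\infty$ and Remark~\ref{finalm}.(1)), so it is a harmless omission worth one sentence.
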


\begin{exa}
1) A typical and relevant case is that of an infinite radial trees $\mT$, i.e., infinite rooted trees such that any two nodes with the same distance $n$ from the root have the same number $\gamma_n$ of children -- for the sake of simplicity, say, $\mT$ is binary, i.e., $\gamma_n=2$ for all $n\in \mathbb N$. Clearly, there are finitely nodes at distance $n$ from the root -- in fact, they are exactly $2^n$ if $\gamma_n=2$. 
Any orbitwise projection acts by simply averaging $\ell^2$-functions supported in an infinite binary subtree $\mT_0$ of $\mT$ over all nodes at the same distance from the root of $\mT_0$. Then, Theorem~\ref{thm:orbit} shows that radial initial data give rise to radial solutions. This is already known in the linear case $p=2$, both in the case of discrete and metric graphs (\cite{PicWoe89,Sol04}). The case of graphs that are not trees has been studied in~\cite{DutJor10}.

2) The range of $P$ with respect to a subgroup $\Gamma$ of ${\rm Aut}(\mG)$ is isomorphic to the node set of a new ``quotient graph'' $\mG/\Gamma$ obtained identifying all the nodes belonging to the same orbit; in the case of radial trees, if $\Gamma$ is the full automorphism group, then $\mG/\Gamma$ is a semi-infinite path, i.e., $\mathbb N$.
\end{exa}

Instead of proving the above theorem directly, we will deduce it as a corollary of a more general result.

\begin{defi}\label{defi:partition}
Let $I$ be a (possibly infinite) set. A partition 
\[
\mV=\dot{\bigcup_{i\in I}}\mV_i
\]
of the node set $\mV$ of $\mG$ is said to be \emph{almost equitable} with \emph{cells} $(\mV_i)_{i\in I}$ if for all $i,j\in I$, $i\neq j$,
\begin{equation}
\label{almostequi}
\hbox{ there are numbers }{c_{ij}\ge 0}\hbox{ s.t.\ }\sum_{\mw\in \mV_j} a(\mv,\mw)=c_{ij}d(\mv)\qquad \hbox{ for all }\mv \in \mV_i.
\end{equation}
%Furthermore, an edge partition of $\mG$ is said to be \emph{equitable} if it is equitable as a node partition of the line graph ${\mG_L}$. 
We write
\[
|\mV_i|_d:=\sum_{\mv \in \mV_i} d(\mv),\qquad i\in I.
\]
\end{defi}

\begin{rem}\label{finalm}
Clearly, each node partition of a graph canonically induces an edge partition: Simply take \emph{edge} cells $\mE_{ij}$ consisting of all edges with initial endpoint in $\mV_i$ and terminal one in $\mV_j$ (observe that in general $\mE_{ij}\neq \mE_{ji}$). Moreover, we let $[\me]:=\mE_{ij}$ if $\me\in \mE_{ij}$ and write
\begin{equation}\label{defin:EA}
|\mE_{ij}|_a:=\sum_{\me \in \mE_{ij}} a(\me),\qquad i,j\in I.
\end{equation}
 
(1) Let $(\mV_i)_{i\in I}$ is an almost equitable partition. Then in particular, 
\[
|\mE_{ij}|_a=c_{ij}|{\mV_i}|_d\qquad \hbox{ for all }i,j\in I\hbox{ s.t.\ }i\neq j.
\]
In particular, if $|\mV_i|_d<\infty$ for each $i\in I$ (e.g., if $\mV$ has finite surface), then also $|\mE_{ij}|_a<\infty$ for each $i,j\in I$ s.t.\ $i\neq j$.

(2) If~\eqref{almostequi} holds for all $i,j$ (and not only for $i\neq j$), then the partition is called \emph{equitable}. 
Then, the \emph{quotient graph} is the directed, weighted multigraph with node set of cardinality $|I|$ (the $i$-th node $\mw_i$ corresponding to the cell $\mV_i$) such that $(\mw_i,\mw_j)$ is an edge (and if so, with weight $c_{ij}$) if and only if $c_{ij}\not=0$. A simple but useful result in the linear, unweighted, finite case is that the spectrum the discrete Laplacian of $\mG$ contains that of the discrete Laplacian of its quotient graph, see e.g.~\cite[Thm.~2.3]{Moh91}. In certain special cases the spectra even agree (\emph{not} counting multiplicity, of course), cf.~\cite[Thm.~7.8]{Chu97}.

(3) Definition~\ref{defi:partition} is a weighted generalization of the classical one for unweighted graphs ($a\equiv 1$, $d\equiv 1$), see e.g.~\cite{Moh91,God97} for equitable partitions and~\cite[\S~2.3]{BroHae12} for almost equitable ones. In the unweighted case, existence of an equitable node partition amounts to saying that each node in $\mV_i$ is adjacent to exactly $c_{ij}$ nodes in $\mV_j$.
\end{rem}

{
The following is the main result of this section. This kind of results seems to be new in the nonlinear case, but in the (linear) context of random walks on graphs these ideas have a long history. Indeed, shorting techniques have become first popular when they were proposed by Nash-Williams in order to prove recurrence of the random walk on the lattice $\mathbb Z^2$, cf.\ the exposition in~\cite[\S~2.2]{DoySne84}. A manifold of further potential theoretic problems can nowadays be treated by shorting methods, see e.g.~\cite[\S\S~2--3 and references therein]{Woe00}.
}

\begin{theo}\label{theo:equipart}
Let $p\in (1,\infty)$. Let $\mG$ have an {almost} equitable partition associated with cells $(\mV_i)_{i\in I}$ s.t.\ $|\mV_i|_d<\infty$ and $|\mE_{ij}|_a<\infty$ for all $i,j\in I$ {with $i\neq j$}.
Consider the cellwise averaging operator $P$ defined by
\[
P f(\mv):=\frac{1}{|\mV_i|_d} \sum_{\mw\in \mV_i} f(\mw)d(\mw),\qquad f\in \ell^2_d(\mV),\; \mv\in \mV_i.
\]
Then the following assertions hold.
\begin{enumerate}[(1)]
\item The range of $P$ is left invariant under $(e^{-t\Delta_p})_{t\ge 0}$.
\item The null space of $P$ is left invariant under $(e^{-t\Delta_p})_{t\ge 0}$ if $p=2$.
\end{enumerate}
\end{theo}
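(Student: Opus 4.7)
The plan is to apply Proposition~\ref{Bar96} twice, reducing invariance of the relevant closed subspaces under $(e^{-t\Delta_p})_{t\ge 0}$ to energy inequalities. Since $P$ is the weighted conditional expectation onto cells, it is an orthogonal projection on $\ell^2_d(\mV)$; its range $R(P)$ is the closed subspace of cellwise constant functions, and the orthogonal projection onto the null space of $P$ is $I-P$. So (1) is equivalent to $\Efun_p(Pf)\le \Efun_p(f)$ for all $f\in w^{1,p,2}_{a,d}(\mV)$, and (2) to the corresponding inequality for $(I-P)f$.

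For (1), write $Pf\equiv \alpha_i$ on $\mV_i$ with $\alpha_i:=|\mV_i|_d^{-1}\sum_{\mw\in\mV_i} f(\mw)d(\mw)$. Then $(\mathcal{I}^T Pf)(\me)=0$ if both endpoints of $\me$ lie in the same cell, and equals $\pm(\alpha_i-\alpha_j)$ if the endpoints split between $\mV_i$ and $\mV_j$ (with sign determined by orientation). The decisive computation is that, by interchanging the order of summation and invoking \eqref{almostequi},
\[
\sum_{\mv\in\mV_i}\sum_{\mw\in\mV_j} a(\mv,\mw)\bigl(f(\mv)-f(\mw)\bigr)=c_{ij}|\mV_i|_d\,\alpha_i-c_{ji}|\mV_j|_d\,\alpha_j = |\mE_{\{i,j\}}|_a\,(\alpha_i-\alpha_j),
\]
using also the symmetry $c_{ij}|\mV_i|_d=c_{ji}|\mV_j|_d=|\mE_{\{i,j\}}|_a$ (cf.\ Remark~\ref{finalm}(1)). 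Dividing by $|\mE_{\{i,j\}}|_a$ identifies $\alpha_i-\alpha_j$ as a weighted average of edge differences across the boundary $\mE_{\{i,j\}}$, so the convexity of $x\mapsto|x|^p$ (Jensen's inequality) yields
\[
|\mE_{\{i,j\}}|_a\,|\alpha_i-\alpha_j|^p\le \sum_{\me\in\mE_{\{i,j\}}} a(\me)|f(\me_+)-f(\me_-)|^p.
\]
Summing over unordered pairs $\{i,j\}$ with $i\neq j$ and discarding the non-negative within-cell contribution to $\Efun_p(f)$ gives $\Efun_p(Pf)\le\Efun_p(f)$, and Proposition~\ref{Bar96} closes (1).

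For (2), exploit that $\Efun_2$ is quadratic with polar form $B(g,h)=\sum_\me a(\me)(\mathcal I^T g)(\me)(\mathcal I^T h)(\me)=\langle \Delta_2 g,h\rangle_{\ell^2_d}$ (discrete Green identity). The orthogonal decomposition gives
\[
\Efun_2(f)=\Efun_2(Pf)+\Efun_2((I-P)f)+2B(Pf,(I-P)f),
\]
so it suffices to show $B(Pf,(I-P)f)=0$. A direct computation analogous to the one above yields, for $\mv\in\mV_i$,
\[
\Delta_2 Pf(\mv)=\tfrac{1}{d(\mv)}\!\!\sum_{\mw\sim\mv}\!\! a(\mv,\mw)(\alpha_{j(\mw)}-\alpha_i)=\sum_{j\neq i} c_{ij}(\alpha_j-\alpha_i),
\]
which is again constant on each cell by almost equitability. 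Since $(I-P)f$ has weighted mean zero on every $\mV_i$, pairing $\Delta_2 Pf$ with $(I-P)f$ in $\ell^2_d(\mV)$ gives $0$, hence $B(Pf,(I-P)f)=0$ and therefore $\Efun_2((I-P)f)\le \Efun_2(f)$, yielding (2) via Proposition~\ref{Bar96}.

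The one genuine subtlety lies in Part~(1): one has to be careful with the orientation of edges across the boundary $\mE_{ij}\sqcup \mE_{ji}$ so that the \emph{signed} edge sum reproduces $(\alpha_i-\alpha_j)$ with the correct multiplicity---this is precisely where \eqref{almostequi} enters and where simply requiring that $P$ average over arbitrary node groupings would fail (cf.\ Remark~\ref{rem:short}(2)). Part~(2) hinges on the quadratic nature of $\Efun_2$, which permits the Pythagorean-type splitting: for $p\neq 2$ no analogous bilinear identity is available, which explains the restriction.
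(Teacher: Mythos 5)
Your reduction via Proposition~\ref{Bar96} and your Part~(1) are essentially the paper's own argument: the almost equitability condition~\eqref{almostequi} is used to identify $\mathcal I^T Pf$ on each cross-cell edge set with the $a$-weighted average of $\mathcal I^T f$ over that set (the paper phrases this through the edgewise averaging operators $\tilde P_{ij}$, you phrase it through the double sum $\sum_{\mv\in\mV_i}\sum_{\mw\in\mV_j}a(\mv,\mw)(f(\mv)-f(\mw))$), and then Jensen's inequality plus discarding the within-cell contribution gives $\Efun_p(Pf)\le\Efun_p(f)$. Part~(2), however, is where you genuinely diverge: the paper never touches the operator $\Delta_2$ at all; it simply writes $\mathcal I^T(f-Pf)=({\rm Id}-\tilde P_{ij})\mathcal I^T f$ on cross edges and $=\mathcal I^T f$ on within-cell edges, and uses that each $\tilde P_{ij}$ is an orthogonal projection on $\ell^2_a(\mE_{ij})$, so the inequality $\|\mathcal I^T(f-Pf)\|_{\ell^2_a}\le\|\mathcal I^T f\|_{\ell^2_a}$ is immediate edge cell by edge cell. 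Your Pythagorean splitting with the cross term $B(Pf,(I-P)f)$ is a legitimate alternative and is equivalent in spirit, but it is heavier than needed.

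The gap is in how you kill the cross term. You invoke a ``discrete Green identity'' $B(g,h)=\langle\Delta_2 g,h\rangle_{\ell^2_d}$ with $g=Pf$, compute $\Delta_2 Pf(\mv)=\sum_{j\neq i}c_{ij}(\alpha_j-\alpha_i)$, and pair against $(I-P)f$. On an infinite, merely locally finite graph this identity is not free: it is precisely the statement that $Pf\in D(\Lfun^N_2)$ (Definition~\ref{defi->thm:main}, Lemma~\ref{identlp}), and the candidate $\Delta_2 Pf$ --- a cellwise constant function with values $\sum_j c_{ij}(\alpha_j-\alpha_i)$ --- need not even lie in $\ell^2_d(\mV)$; moreover both the node-level summation by parts and the subsequent regrouping of the series by cells are rearrangements that require absolute convergence you have not checked. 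The conclusion itself is salvageable without $\Delta_2$: since $\mathcal I^T Pf\in\ell^2_a(\mE)$ by your Part~(1), the sum defining $B(Pf,(I-P)f)$ converges absolutely, so you may group it by cross-cell edge sets; on the pair $\{i,j\}$ it factors as $(\alpha_i-\alpha_j)\sum_{\mv\in\mV_i,\,\mw\in\mV_j,\,\mv\sim\mw}a(\mv,\mw)\bigl(h(\mv)-h(\mw)\bigr)$ with $h=(I-P)f$, and this inner sum splits (absolutely, since $\sum_{\mw\in\mV_j}a(\mv,\mw)=c_{ij}d(\mv)$ and $|\mV_i|_d<\infty$ give $\sum_{\mv\in\mV_i}c_{ij}|h(\mv)|d(\mv)<\infty$ by Cauchy--Schwarz) into $c_{ij}|\mV_i|_d$ times cell averages of $h$, which vanish. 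Either repair your argument along these lines, staying at the edge level, or adopt the paper's shorter route via the orthogonal projections $\tilde P_{ij}$.
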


{In the linear case of $p=2$ invariance of both the range and the null space of $P$ under the semigroup amounts to saying that $P$ commutes with $(e^{-t\Delta^N_2})_{t\ge 0}$, hence 
Theorem~\ref{theo:equipart} can be seen as a weighted semigroup counterpart of a well-known result of algebraic graph theory, see e.g.~\cite[Thm.~9.3.3]{GodRoy01}; for the same reason, the sufficiency condition in~\cite[Thm.~1]{KelLenWoj11} is a corollary of the above theorem, for in a spherically symmetric graph the spheres clearly induce an equitable partition. Furthermore, Theorem~\ref{theo:equipart} is a discrete counterpart of the settings in~\cite{Sol04,CarMugNit08} (where different terminologies are used).}

\begin{proof}
By Proposition~\ref{Bar96}, the range of $P$ is invariant under $(e^{-t\Delta_p})_{t\ge 0}$ if and only if
\begin{equation}
\label{barthe1}
\|{\mathcal I}^T Pf\|^p_{\ell^p_a}\le \|{\mathcal I}^T f\|^p_{\ell^p_a}\qquad\hbox{for all } f\in w^{1,p,2}_{a,d}(\mv),
\end{equation}
whereas its null space is invariant if and only if
\begin{equation}
\label{barthe2}
\|{\mathcal I}^T (f-Pf)\|^p_{\ell^p_a}\le \|{\mathcal I}^T f\|^p_{\ell^p_a}\qquad\hbox{for all } f\in w^{1,p,2}_{a,d}(\mv).
\end{equation}

{
First of all, define for all $i,j\in I$, $i\neq j$, the averaging operator over the cell $\mE_{ij}$,
\[
\tilde{P}_{ij} u(\me):=\frac{1}{|\mE_{ij}|_a} \sum_{\mf\in \mE_{ij}} a(\mf)u(\mf),\qquad u\in \ell^2_a(\mE),\; \me\in \mE_{ij},
\]
which is an orthogonal projection on the Hilbert space $\ell^2(\mE_{ij})$.
}
Furthermore, observe that for all $f\in \ell^2_d(\mV)$, all $\me \in \mE$, and all $i,j\in I$
\begin{itemize}
\item  if $\me\in \mE_{ii}$, then ${\mathcal I}^T Pf(\me)=0$ since $Pf(\mv)=Pf(\mw)$ for all $\mv,\mw\in \mV_i$,
\item if $\me\in \mE_{ij}$ with $i\neq j$, and hence $\me_+\in \mV_i$ and $\me_-\in \mV_j$, then
\begin{eqnarray*}
{\mathcal I}^T Pf(\me)&=&Pf(\me_+)-Pf(\me_-)\\
&=&\frac{1}{|\mV_i|_d} \sum_{\mv\in \mV_i} f(\mv)d(\mv)-\frac{1}{|\mV_j|_d} \sum_{\mw\in \mV_j} f(\mw)d(\mw)\\
&=& \left(f\Big|d\left( \frac{{\mathbf 1}_{\mV_i}}{|\mV_i|_d}-\frac{{\mathbf 1}_{\mV_j}}{|\mV_j|_d}\right)\right)_{\ell^2(\mV)}\\
&\stackrel{(*)}{=}& \left(f\Big|\mathcal I \frac{a{\mathbf 1}_{\mE_{ij}}}{|\mE_{ij}|_a}\right)_{\ell^2(\mV)}\\
&=&\frac{1}{|\mE_{ij}|_a} \sum_{\mf \in \mE_{ij}}(\mathcal I^T f)(\mf)a(\mf)\\
&=&{\tilde{P}_{ij}\mathcal I^T f(\me)},
\end{eqnarray*}
%% \begin{eqnarray*}
%% \left|{\mathcal I}^T Pf(\me)\right|&=&\left|Pf(\me_+)-Pf(\me_-)\right|\\
%% &=&\left|\frac{1}{|\mV_i|_d} \sum_{\mv\in \mV_i} f(\mv)d(\mv)-\frac{1}{|\mV_j|_d} \sum_{\mw\in \mV_j} f(\mw)d(\mw)\right|\\
%% &=&\left| \left(f\Big|d\left( \frac{{\mathbf 1}_{\mV_i}}{|\mV_i|_d}-\frac{{\mathbf 1}_{\mV_j}}{|\mV_j|_d}\right)\right)_{\ell^2(\mV)}\right|\\
%% &\stackrel{(*)}{=}&\left| \left(f\Big|\mathcal I \frac{a{\mathbf 1}_{\mE_{ij}}}{|\mE_{ij}|_a}\right)_{\ell^2(\mV)}\right|\\
%% &=&\left|\frac{1}{|\mE_{ij}|_a} \sum_{\mf \in \mE_{ij}}(\mathcal I^T f)(\mf)a(\mf)\right|\\
%% &=&\left|\tilde{P}\mathcal I^T f(\me)\right|,
%% \end{eqnarray*}
where $(*)$ follows from the definition of {almost} equitable partition and Remark~\ref{finalm}.(2).
\end{itemize}

(1) In order to prove~\eqref{barthe1} for general $p>1$, we proceed as follows. One has in view of the above observations for all $f\in w^{1,p,2}_{a,d}(\mE)$
\begin{eqnarray*}
\sum_{\me\in\mE} a(\me)|\mathcal I^T Pf(\me)|^p
&=&\sum_{\substack{i,j \in I\\ i\neq j}}\sum_{\me \in \mE_{ij}} a(\me)|\mathcal I^T Pf(\me)|^p+\sum_{i \in I}\sum_{\me \in \mE_{ii}} a(\me)|\mathcal I^T Pf(\me)|^p\\
&=&\sum_{\substack{i,j \in I\\ i\neq j}} \sum_{\me \in \mE_{ij}} a(\me) |\tilde{P} \mathcal I^T f(\me)|^p\\
&=&\sum_{\substack{i,j \in I\\ i\neq j}} \sum_{\me \in \mE_{ij}} a(\me)\left|\frac{1}{|\mE_{ij}|_a} \sum_{\mf \in \mE_{ij}}a(\mf)\mathcal I^T f(\mf)\right|^p\\
&\stackrel{(*)}\le&\sum_{\substack{i,j \in I\\ i\neq j}} \sum_{\me \in \mE_{ij}} \frac{a(\me)}{|\mE_{ij}|_a} \sum_{\mf \in \mE_{ij}}a(\mf)\left|\mathcal I^T f(\mf)\right|^p\\
&=&\sum_{\substack{i,j \in I\\ i\neq j}}  \sum_{\mf \in \mE_{ij}}a(\mf)\left|\mathcal I^T f(\mf)\right|^p\\
& \le&\sum_{\me\in\mE} a(\me)|\mathcal I^T f(\me)|^p,
\end{eqnarray*}
where $(*)$ follows from Jensen's inequality. This shows that~\eqref{barthe1} holds and completes the proof.

{
(2) In order to check~\eqref{barthe2}, use again the above identities for $\mathcal I^T Pf$ to deduce that for all $f\in w^{1,p,2}_{a,d}(\mE)$
\begin{eqnarray*}
\|{\mathcal I}^T (f-Pf)\|^2_{\ell^2_a}&=&\sum_{\substack{i,j \in I\\ i\neq j}}\sum_{\me \in \mE_{ij}} a(\me)|\mathcal I^T f(\me)-\mathcal I^T Pf(\me)|^2+\sum_{i \in I}\sum_{\me \in \mE_{ii}} a(\me)|\mathcal I^T f(\me)-\mathcal I^T Pf(\me)|^2\\
&=&\sum_{\substack{i,j \in I\\ i\neq j}}\sum_{\me \in \mE_{ij}} a(\me)|\mathcal I^T f(\me)-\tilde{P}_{ij}\mathcal I^T f(\me)|^2+\sum_{i \in I}\sum_{\me \in \mE_{ii}} a(\me)|\mathcal I^T f(\me)|^2\\
&=&\sum_{\substack{i,j \in I\\ i\neq j}}\left\|\left({\rm Id}-\tilde{P}_{ij}\right)\mathcal I^T f\right\|^2_{\ell^2_a(\mE_{ij})}+\sum_{i \in I}\left\|\mathcal I^T f\right\|^2_{\ell^2_a(\mE_{ij})}\\
&\le &\sum_{\substack{i,j \in I\\ i\neq j}}\left\|\mathcal I^T f\right\|^2_{\ell^2_a(\mE_{ij})}+\sum_{i \in I}\left\|\mathcal I^T f\right\|^2_{\ell^2_a(\mE_{ij})}=\|{\mathcal I}^T f\|^2_{\ell^2_a},
\end{eqnarray*}
where the last inequality follows from the fact that each $\tilde{P}_{ij}$ is an orthogonal projection.
}
\end{proof}

\begin{rem}\label{rem:expl}
(1) There exist graphs on which the null space of $P$ is \emph{not} left invariant under $e^{-t\Delta_p}$ for any $p\neq 2$. An example for which~\eqref{barthe2} fails to hold for any $p\not=2$ is given as follows: Take $\mG$ to be a path of length 3, with weights $a\equiv 1$ and $d\equiv 1$. Consider the trivial equitable partition ($[\mv]:=\{\mv\}$ for all nodes) and the function 
\[
f\equiv \left(1,\frac12,0,x\right)\in \mathbb R^4,
\]
for $x\in \mathbb R$ to be fitted later. Then
\[
\mathcal I^T f\equiv \left(-\frac12,-\frac12,x\right),
\]
and choosing values of $x$ slightly larger than $1$ for $p>2$, and slightly smaller than $1$ for $p<2$ yields the sought-after counterexample, by elementary calculus arguments.

(2) Theorem~\ref{theo:equipart} is strictly more general than Theorem~\ref{thm:orbit}, since the orbit partition w.r.t.\ any subgroup $\Gamma$ of ${\rm Aut}(\mG)$ yields an equitable partition of $\mV$, but the converse is generally false (cf.~\cite{God97}) -- and \emph{a fortiori} there exist almost equitable partitions that do not derive from an orbit partition. 
\end{rem}

\begin{exa}
(1) As we have already observed in Remark~\ref{rem:short}.(1), averaging over all nodes of a graph with finite surface yields a projection whose range is invariant under $(e^{-t\Delta_p})_{t\ge 0}$ for any $p$. This is a particular instance of Theorem~\ref{theo:equipart}, since each graph has the (trivial) almost equitable partition given by $\mV_1\equiv \mV$. Furthermore, any graph has a trivial equitable partition -- simply take a partition each of whose cells is a singleton. However, neither of these partitions reduces complexity of the problem in any way. Hence, the natural question concerning Theorem~\ref{theo:equipart} is not whether it can be applied at all (indeed, it always can), but rather whether it can yield substantial information about the system. As a rule of thumb, the rougher is a non-trivial almost equitable partition we are able to find, the more interesting information we obtain.

(2) Let $\Gamma$ be a subgroup of ${\rm Aut}(\mG)$. The partitioning of $\mV$ into $Fix(\Gamma):=\{\mv \in \mV:Ov=v\;\forall O\in \Gamma\}$ and its complement is in general neither almost equitable, nor respected by $(e^{-t\Delta_p})_{t\ge 0}$ for any $p$, as the simple following example shows: Take $\mG_3$ as an unweighted $3$-star with a path of length 2 attached to the center, and consider the function $f$ defined by $f(v)= 0$ on each node $v$ of the star (including the center) and $f(v)=1$ on both the remaining nodes.
\end{exa}

\begin{exa}
{A prototypical class of infinite graphs with almost equitable (in fact, even equitable) partitions are the three regular tessellations of the plane, seen as unweighted graphs: They have equitable partitions given by the spheres, i.e., by the sets of nodes having same distance from one arbitrarily fixed root.  Among further classes of graphs that have (non-trivial!) almost equitable partitions, we mention so-called \emph{weakly spherically symmetric trees}, \emph{spherically symmetric trees}, \emph{antitrees}, and \emph{trees with complete spheres}. These classes have been thoroughly investigated in~\cite{BreKel11,KelLenWoj11}.}

{
It turns out that all these classes are particular instances of graphs with almost equitable partitions. Indeed, with our notation one can describe them as follows.
\begin{itemize}
\item Weakly spherically symmetric trees are possibly infinite graphs with a root $\mv_0$ and having an almost equitable partition that is induced by spheres, i.e., $\mV_i:=\{\mv \in \mV: {\rm dist }(\mv,\mv_0)=i\}$.
\item Spherically symmetric trees (sometimes called \emph{radial trees} in the literature; they are a generalization of \emph{homogeneous trees}, and in particular of \emph{binary trees}) are possibly infinite graphs with $a\equiv 1$, $d\equiv 1$, and having an  equitable partition that satisfies
\begin{itemize}
\item $|\mV_1|=1$,
\item $c_{ii}= 0$ for all $i\in I$,
\item $c_{ij}=0$ for all $i,j\in I$ such that $|j-i|\ge 2$, 
\item $c_{i-1\; i}|\mV_{i-1}|=|\mV_i|$ for all $i\in I$, and finally
\item $c_{i\; i-1}=1$ for all $i\in I$.
\end{itemize}
\item Antitrees are possibly infinite graphs with $a\equiv 1$, $d\equiv 1$, and having an equitable partition that satisfies
\begin{itemize}
\item $|\mV_1|=1$,
\item $c_{ii}= 0$ for all $i\in I$,
\item $c_{ij}=0$ for all $i,j\in I$ such that $|j-i|\ge 2$, 
\item $c_{i-1\; i}=|\mV_i|$ for all $i\in I$, and finally
\item $c_{i\; i-1}=|\mV_{i-1}|$ for all $i\in I$.
\end{itemize}
\item Trees with complete spheres (which are actually not trees, after all) are possibly infinite graphs with $a\equiv 1$, $d\equiv 1$, and having an equitable partition that satisfies
\begin{itemize}
\item $|\mV_1|=1$,
\item $c_{ii}= 0$ or $c_{ii}=|\mV_i|-1$ for all $i\in I$,
\item $c_{ij}=0$ for all $i,j\in I$ such that $|j-i|\ge 2$, 
\item $c_{i-1\; i}|\mV_{i-1}|=|\mV_i|$ for all $i\in I$, and finally
\item $c_{i\; i-1}=1$ for all $i\in I$.
\end{itemize}
\end{itemize}
Actually, in~\cite{BreKel11} the last three classes are mentioned as examples of a more general class -- the class  of what the authors call~\emph{path-commuting graphs}. }
\end{exa}
{It seems that in comparison with both weak spherical symmetry and path-commuting property, the notion of almost equitable partition permits a finer tuning when studying less symmetric graphs. Actually, we are not able to say what relationship exists between graphs with a non-trivial almost equitable partition and graphs with the other two properties. However, the following suggests that combining two different spherically symmetric graphs one can easily produce examples of graphs that are not path-commuting but do have non-trivial equitable partitions. Such ``combinations'' can reflect different graph operations.}
\begin{exa}
{(1) Consider a graph $\mG$ obtained attaching  a semi-infinite path $\{\mw_0,\mw_1,\mw_2\ldots\}$ to the root $\mv_0$ of a binary tree (i.e., letting $\mv_0=\mw_0$), and setting $a\equiv 1$ and $d\equiv 1$. Clearly, this is again a rooted tree and one can spot a good deal of symmetry in it. Still, $\mG$ it is not weakly spherically symmetric (e.g., the two nodes at distance $1$ from $\mv_0$ do not have same outdegree), and \emph{a fortiori} not spherically symmetric. By~\cite[Prop.~2.5]{BreKel11} the graph is not path commuting, and as a consequence neither the theory developed in~\cite{BreKel11} nor~\cite[Thm.~1]{KelLenWoj11} can be applied. However, $\mG$ has an equitable partition associated with cells $(\mV_{1,i},\mV_{2,i})_{i\in \mathbb N}$, where $\mV_{1,i}$ is the  $i$-th sphere of the binary tree and $\mV_{2,i}$ is a singleton consisting of the $i$-th node of the infinite path. Accordingly, we can apply Theorem~\ref{theo:equipart} and conclude that $e^{-t\Delta_2}$ commutes with the cellwise averaging operator for all $t\ge 0$.}

{
Observe however that if we consider the same graph $\mG$ choosing weights in a judicious way -- say, assigning a weight $1$ to each node and each edge of the binary tree and a weight $2^n$ to both the node $\mw_{n+1}$ and the edge $(\mw_n,\mw_{n+1})$ in the semi-infinite path, $n=1,2,\ldots$ -- then  one finds another equitable partition associated with cells $(\mV_{i})_{i\in \mathbb N}$, where $\mV_{i}$ is the  $i$-th (unweighted) sphere of the binary tree (in this case, $c_{ii}=0$, $c_{i\; i+1}=2$, and $c_{i+1\; i}=1$ for all $i=1,2,\ldots$), and in this case the weighted graph is weakly  spherically symmetric, too. It is still not spherically symmetric, though.}

{
(2) If $a\equiv 1$ and $d\equiv 1$, then the graph in~\cite[Fig.\ 4.(b)]{BreKel11} is spherically symmetric, hence has a non-trivial equitable partition, but it is not path commuting.
}

{
(3) If $a\equiv 1$ and $d\equiv 1$, then the comb lattice in $\mathbb Z^2$, cf.~\cite[\S~2.21]{Woe00}, is a tree with a non-trivial equitable partition -- say, with cells $(\mV_i)_{i\in \mathbb Z}$, where $\mV_i$ is the set of all nodes with ordinate $i$. However, no matter which node we consider as a root, there will be a sphere containing two nodes of indegree 2 and at least one node of indegree 1 -- hence this graph is not weakly spherically symmetric, hence not spherically symmetric, and by~\cite[Prop.~2.5]{BreKel11} not even path commuting.
}
\end{exa}

\section{Further discrete operators}\label{generalized}

In this section we comment on several different possible extensions of the theory introduced above.

\subsection{Generalized Laplacians}\label{sec:gen-lapl}

Fiedler, Colin de Verdi\`ere, and others have introduced and studied several variations on the classic discrete Laplacian. The following one, which goes back to~\cite{Col90}, is the most usual one: Given a simple connected graph with node set $\mV$, any $|\mV|\times |\mV|$-matrix whose off-diagonal $\mv$-$\mw$-entry is 
\begin{itemize}
\item $=0$ if and only there is no edge between $\mv$ and $\mw$ and
\item $<0$ otherwise
\end{itemize}
is called a \emph{generalized Laplacian}. Besides the discrete Laplacian, also $-\mathcal A$ (where $\mathcal A$ is the adjacency matrix) and Chung's normalized Laplacian (\cite[\S~1.4]{Chu97})
\[
{\mathcal P}:=D^{-\frac12} \Delta \,D^{-\frac12}
\]
(where $D$ is the diagonal matrix whose $i$-$i$-entry is the degree of $\mv_i$) are clearly generalized Laplacians. Unlike the adjacency matrix, Chung's $\mathcal P$ is also positive definite, since 
\[
{\mathcal P}=(D^{-\frac12}\mathcal I) (D^{-\frac12}\mathcal I)^T.
\] 
One of the most relevant motivations for working with the normalized Laplacian is that is a bounded operator, even if $\mG$ is not locally finite.

Let $\mG$ be additionally finite. The \emph{signless Laplacian}
\[
{\mathcal Q}:=D+A
\]
has been introduced in~\cite{DesRao94}. Its study has gained much momentum in the last decade, also owing to thorough investigations by Cvetkvovi\'c and others, cf.~\cite{CveRowSim07}. (Clearly, $-\mathcal Q$ is a generalized Laplacian, too).
One possible reason for this popularity is the richness of interesting graph theoretical properties of $\mathcal Q$ (e.g., $0$ is always an eigenvalue whose multiplicity is the number of bipartite components); another one is its nice variational structure. In particular, one easily sees that
\[
{\mathcal Q}:=\mathcal J \mathcal J^T,
\] 
where $\mathcal J$ is the incidence matrix of the undirected graph underlying $\mG$, i.e., $\mathcal J:=\mathcal I^++\mathcal I^-$. One may further generalize this class of Laplacians by taking $\sigma\in \ell^\infty_\nu(\mV)$ and letting
\[
(\mathcal J_\sigma)_{\mv\me}:=\mathcal I^+_{\mv\me} + \sigma(\mv)\mathcal I^-_{\mv\me},\qquad \mv\in \mV,\; \me \in \mE.
\]
We may then introduce
\[
{\mathcal Q}^\sigma:=\mathcal J_\sigma \mathcal J^T_\sigma,
\] 
which is still a positive definite matrix and also a generalized Laplacian (each of whose non-vanishing off-diagonal entries is one of the $\sigma_\mv$); along with its normalized version
\[
{\mathcal P}^\sigma:=(D^{-\frac12}\mathcal J_\sigma) (D^{-\frac12}\mathcal J_\sigma)^T.
\] (This generalization process could continue by allowing for more general matrices $\sigma$, but this would necessarily destroy locality of ${\mathcal Q}^\sigma$).
It is then natural to consider a \emph{signless $p$-Laplacian} by
\[
{\mathcal Q}_p f:=\mathcal J (|\mathcal J^T f|^{p-1}{\rm sign}(\mathcal J^T f)),
\] 
or, more generally, 
\[
\mathcal Q^\sigma_p f:=\mathcal J_\sigma (|\mathcal J_\sigma^T f|^{p-1}{\rm sign}(\mathcal J_\sigma^T f)).
\] 
Likewise, one can introduce the normalized operator
\[
\mathcal P^\sigma_p f:=D^{-\frac12}\mathcal J_\sigma (|\mathcal J_\sigma^T D^{-\frac12}f|^{p-1}{\rm sign}(\mathcal J_\sigma^T D^{-\frac12}f)).
\] 

We are aware of only a few previous investigations on the signless $p$-Laplacian in the literature, including~\cite{BiyHelLey09,ZhaZha11}, where it is proved that the sets of eigenvalues of $\mathcal Q_p$ and $\Delta_p$ agree, provided that $\mG$ is bipartite.

\begin{rem}
In the linear case $p=2$, the parabolic theory of the signless Laplacian on uniformly locally finite graphs is not overly interesting. This is due to the fact that $\mathcal Q+\Delta=2D$ (we are omitting the index $2$ of $\Delta^N_2,\mathcal Q_2$). All these three operators are bounded. Although $D$ and $\mathcal A$ (and therefore $D$ and $\Delta,\mathcal Q$) do not commute, so that in general
\[
e^{t\mathcal Q}\not=e^{-t\Delta}e^{2tD},\qquad t\in \mathbb R,
\]
Lie's product formula still holds and yields
\[
e^{t\mathcal Q}=\lim_{n\to\infty}\left(e^{-\frac{t}{n}\Delta}e^{2\frac{t}{n}D}\right)^n,\qquad t\in \mathbb R.
\]
Alternatively, we we can recover another formula for $(e^{t\mathcal Q})_{t\ge 0}$ based on the Dyson--Phillips formula. 
Since $D$ is a positive diagonal matrix, knowing qualitative information on $(e^{-t\Delta})_{t\in\mathbb R}$ promptly yields quite complete information on $(e^{t\mathcal Q})_{t\in\mathbb R}$, and vice versa. For example, since $(e^{-t\Delta})_{t\ge 0}$ is a positive $C_0$-semigroup, so is $(e^{t\mathcal Q})_{t\ge 0}$.

Things are different in the general nonlinear case, since it is not clear in which sense $\mathcal Q_p$ can be seen as a perturbation of $\Delta_p$ (unless $p$ is an even natural number, in which case $\Delta_p$ and $\mathcal Q_p$ can be compared using the binomial formula).
\end{rem}

 The following can be proved like in the case of $\Delta_p$. We now also allow for infinite graphs, i.e., we are in the same setting of the previous sections.

\begin{prop}
Let $p\in (1,\infty)$ and $\sigma\in \ell^\infty_\nu(\mV)$ and consider the functional $\Ffun_p^\sigma:\ell^2_d(\mV)\to [0,\infty]$ defined by
\[
\Ffun_p^\sigma:u\mapsto \frac{1}{p}\|\mathcal J^T_\sigma f(\me)\|^p_{\ell^p_a (\mE)} =\frac{1}{p}\sum_{\substack{\me\in \mE}} |f(\me_+)+\sigma(\me_-) f(\me_-)|^p.
\]

Then the following assertions hold.

\begin{enumerate}[(1)]
\item
For all $p\in [1,\infty]$, $z^{1,p,2}_{a,d}(\mV)$ is a Banach space with respect to the norm defined by
\[
\|f\|_{z^{1,p,2}_{a,d}}:=\|f\|_{\ell^2_d}+\|\mathcal J^T_\sigma f\|_{\ell^p_a}.
\]
For all $p\in [1,\infty]$, $z^{1,p,2}_{a,d}(\mV)$ is continuously and densely embedded into $\ell^2_d(\mV)$.
If moreover $p\in [1,\infty)$, then $z^{1,p,2}_{a,d}(\mV)$ is separable in $\ell^2_d(\mV)$. If $p\in (1,\infty)$, then $z^{1,p,2}_{a,d}(\mV)$  is uniformly convex and hence reflexive.
\item The functional $\Ffun_p^\sigma$ is convex. It is continuously Fréchet differentiable as a functional on $z^{1,p,2}_{a,d}(\mV)$, while it is lower semicontinuous as a functional on $\ell^2_d(\mV)$.
\item The subdifferential $\mathcal Q_p^\sigma:=\partial\Ffun_p^\sigma $ of $\Ffun_p^\sigma$ generates a $C_0$-semigroup of nonlinear contractions on $\ell^2_d(\mV)$.			
\item If $\sigma\equiv 1$, then the set of eigenvectors of $\mathcal Q_p^\sigma$ for the eigenvalue 0 form a subspace of $\ell^2_d(\mV)$ whose dimension is the number of bipartite components of $\mG$ with finite surface.
\end{enumerate}	
\end{prop}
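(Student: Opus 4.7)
The proof proceeds in close analogy with the development for $\Delta_p$ in Sections~2--3, replacing the oriented incidence matrix $\mathcal I$ throughout by $\mathcal J_\sigma$. First, I would observe that $\mathcal J_\sigma^T f(\me)=f(\me_+)+\sigma(\me_-)f(\me_-)$, so that $\mathcal J_\sigma^T:\mathbb R^\mV\to\mathbb R^\mE$ is still linear, and $\mathcal J_\sigma^T$ is a bounded linear operator from $\ell^2_d(\mV)$ into $\ell^2_a(\mE)$ whenever $\mG$ is uniformly locally finite, by the same Fubini computation as in Lemma~\ref{lemma:car11}. For (1), I would mimic the proof of Lemma~\ref{propw12p} verbatim: completeness is an easy consequence of the completeness of $\ell^2_d(\mV)$ and $\ell^p_a(\mE)$; uniform convexity (for $p\in(1,\infty)$) is obtained by embedding $z^{1,p,2}_{a,d}(\mV)$ isometrically into the uniformly convex product $\ell^2_d(\mV)\times \ell^p_a(\mE)$ via $f\mapsto(f,\mathcal J_\sigma^T f)$; density of $z^{1,p,2}_{a,d}(\mV)$ in $\ell^2_d(\mV)$ follows by noting that $c_{00}(\mV)\subset z^{1,p,2}_{a,d}(\mV)$; separability is inherited from $\ell^2_d(\mV)\times \ell^p_a(\mE)$.

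For (2), the identity $\Ffun_p^\sigma=\frac{1}{p}\|\cdot\|^p_{\ell^p_a(\mE)}\circ\mathcal J_\sigma^T$ on $z^{1,p,2}_{a,d}(\mV)$ exhibits $\Ffun_p^\sigma$ as a composition of a convex, continuously Fréchet differentiable functional with a bounded linear map, so both convexity and differentiability follow from the chain rule exactly as in Lemma~\ref{propfunc}. Lower semicontinuity on $\ell^2_d(\mV)$ then follows from~\cite[Lemma~IV.5.2]{Sho97} (using reflexivity of $z^{1,p,2}_{a,d}(\mV)$). For (3), the triple $(z^{1,p,2}_{a,d}(\mV),\ell^2_d(\mV),\Ffun_p^\sigma)$ satisfies Assumptions~\ref{assum:append}, so Theorem~\ref{thm:brezkato} applies and the subdifferential $\mathcal Q_p^\sigma$ generates a $C_0$-semigroup of nonlinear contractions. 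A direct computation along the lines of Lemma~\ref{identlp} identifies the action of $\mathcal Q_p^\sigma$ on $c_{00}(\mV)$ with the formula announced in the text.

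The genuinely new point is (4), which is the step I expect to require most care. If $f\in\ell^2_d(\mV)$ is in the kernel of $\mathcal Q_p^\sigma$ with $\sigma\equiv 1$, pairing with $f$ and using the variational characterization gives $\mathcal J^T f\equiv 0$, i.e.\ $f(\me_+)=-f(\me_-)$ for every edge $\me\in\mE$. Hence $|f|$ is constant on every connected component of $\mG$, while $f$ itself changes sign along every edge. Traversing any closed walk of length $\ell$ shows $f=(-1)^\ell f$ at the starting node, so $f$ must vanish on any component containing an odd closed walk, i.e.\ on any non-bipartite component. On a bipartite component $\mathcal C$ with bipartition $\mathcal C=\mathcal C_+\dot\cup\mathcal C_-$, the condition forces $f\equiv c$ on $\mathcal C_+$ and $f\equiv -c$ on $\mathcal C_-$ for some $c\in\mathbb R$; membership in $\ell^2_d(\mV)$ forces either $c=0$ or $|\mathcal C|_d<\infty$, and the latter (by Definition~\ref{defi:basic}) means precisely that $\mathcal C$ has finite surface. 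Thus the kernel is spanned by the signed characteristic vectors of the bipartite components of finite surface, and its dimension equals their number. The main technical subtlety is to verify that the pairing argument legitimately forces $\mathcal J^T f=0$ for a weak solution in $\ell^2_d(\mV)$; this is precisely the reason why the formulation of $D(\mathcal Q_p^\sigma)$ enforces $f\in z^{1,p,2}_{a,d}(\mV)$ (mirroring the role of $w^{1,p,2}_{a,d}(\mV)$ for $\Lfun^N_p$), so testing with $h=f$ yields $\|\mathcal J^T f\|^p_{\ell^p_a}=0$ and the combinatorial analysis above goes through.
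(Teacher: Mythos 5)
Your proposal is correct, and for assertions (1)--(3) it follows exactly the paper's route: the paper's proof consists of the single observation that $\Ffun_p^\sigma=\frac1p\|\cdot\|^p_{\ell^p_a}\circ\mathcal J^T_\sigma$ with $\mathcal J^T_\sigma$ bounded from $z^{1,p,2}_{a,d}(\mV)$ to $\ell^p_a(\mE)$, so that the arguments of Lemma~\ref{propw12p}, Lemma~\ref{propfunc} and Theorem~\ref{thm:brezkato} carry over verbatim -- precisely what you do. The only place where you genuinely diverge is (4): the paper disposes of it by replacing the dimension formula for the null space of $\mathcal I^T=\mathcal J^T_{-1}$ with the known formula for the null space of the unsigned incidence matrix $\mathcal J^T_1$, citing~\cite{van76}, whereas you prove the needed kernel description from scratch: testing the weak formulation with $h=f$ to get $\mathcal J^T f=0$, then the alternating-sign/odd-walk argument killing non-bipartite components, and the $\ell^2_d$-summability argument selecting exactly the bipartite components of finite surface. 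Your self-contained argument is slightly longer but has the advantage of treating the infinite-graph situation directly -- in particular the finite-surface restriction, which in the paper is inherited from the analogy with Remark~\ref{onecannot}.(3) and Remark~\ref{rem:bije} rather than from the cited finite-graph rank formula -- so both proofs are valid, with yours being the more elementary and explicit one on this point.
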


Observe that unless $\sigma=\pm 1$, $\Ffun_p^\sigma$ and hence its subdifferential  $\mathcal Q_p^\sigma$ do depend on the orientation of $\mG$.

\begin{proof}
Because 
\begin{equation*}\label{eq-ep-2}
\Ffun_p^\sigma =\frac{1}{p}\|\cdot\|^p_{\ell^p_a}\circ \mathcal J^T_\sigma,
\end{equation*}
and because $\mathcal J_\sigma$ is a bounded linear operator from $z^{1,p,2}_{a,d}(\mV)$ to $\ell^p_a(\mE)$, all the proofs are analogous to those for the corresponding assertions about {$\Efun^N_p$} and $\Delta^N_p$. The only minor change is needed in the proof of (4), where the dimension formula for the null space of $\mathcal I^T=\mathcal J^T_{-1}$ has to be replaced by a dimension formula for the null space of $\mathcal J^T_1$, cf.~\cite{van76}. 
%A similar formula seems to be unknown for general $\sigma$.
\end{proof}

\subsection{The $p(\me)$-Laplacian}

{
We have already mentioned in the introduction that the $p$-heat equation can be conveniently used for image processing. Let us shortly comment on this, also referring to the celebrated articles~\cite{PerMal90,CatLioMor92,CasCatCol93}, where geometric PDEs related to the $p$-heat equation were first considered with this purpose, and to the historical survey~\cite{CasMorSap93}. 
}

{
Assume the user is given a noisy b/w image (an analog one in the continuous case, a digital one in the present discrete setting; the color case may be also treated with minor technical modifications%bgr
). In  an attempt to restore the original image, a smart strategy consists in using the noisy image as the initial data of a $p$-heat equation, where the unknown $f$ describes how dark a point/pixel is. The basic idea is that a portion of the image can be safely smoothed if the gray tones are relatively homogeneous (low gradient), whereas a sharp increase in the value of $f$ (high gradient) suggests the existence of corners in the original image and should therefore be preserved. In other words, diffusion-driven smoothing should be stronger in regions of low gradient, and weaker in regions of high gradient. This can be obtained considering choosing $1\le p\ll 2$, while the opposite applies if $p\gg 2$, and then considering the equilibrium state towards which the system will (hopefully) converge. Essentially the same principle justify the usage of $p$-Laplacians for clustering tasks.}

{
Now, what if the user does in fact know the original image? In this case, one can argue that more accurate processing can be performed if the user is allowed, if needed, to modify the value $p$ in dependence of specific regions of the image. This leads to the introduction of the so-called $p(x)$-Laplacian.} In our discrete setting this amounts to consider a modified energy functional given by
\[
f\mapsto \sum_{\me \in \mE}\frac{a(\me)}{p(\me)}| f(\me_+)-f(\me_-)|^{p(\me)}
\]
under suitable assumptions on $p\in {\mathbb R}^\mE$. In this case it would not in general be its subdifferential that generates a semigroup, but rather the subdifferential of its convex, lower semicontinuous relaxation. We omit the details. This approach proves useful if, for instance, the user is aware of the fact that in a certain region of the image two different objects with similar colors are juxtaposed, so that smoothing that region is not safe after all.

\subsection{Discrete Schrödinger operators}

Another possible direction of generalization consists of considering an energy functional with additional terms defined by
\[
f\mapsto \frac{1}{p}\sum_{\me \in \mE}a(\me) |f(\me_+)-f(\me_-)|^p+\frac{1}{2}\sum_{\mv \in \mV}b(\mv) |f(\mv)|^2.
\]
for some $b$ asymptotically comparable with $d$. Due to their interpretation in (linear) potential theory, the terms in the second sum are sometimes referred to as \emph{killing terms}. The subdifferential of $\Efun_p$ is in fact the discrete analog of a Schrödinger operator with scalar potential $b$. We refer to~\cite{KelLen09} and references therein for a comprehensive theory in the linear case.

\subsection{Discrete operators with boundary conditions at infinity}\label{sec:disc-bdry-inf}

We have already remarked that whenever $\mG$ is finite, $\mathring{w}^{1,p,2}_{a,d}(\mV)$ always agrees with $w^{1,p,2}_{a,d}(\mV)$ (in fact, both agree with $\ell^2_d(\mV)$), but for infinite graphs things are less obvious, and a boundary of $\mG$ may arise. Several different notions of such a boundary exist: A characterization of the quotient space
$$w^{1,2,2}(\mG)/\mathring{w}^{1,2,2}(\mG).$$
 is a classic topic of potential theory and in special cases it leads to the introduction of the so-called Martin boundary (see e.g.~\cite[Chap.~IV]{Woe00} for its connection to the notion of space of ends of an infinite graph).
Some results in this direction in the general weighted case, including subtle discussions of parabolic and graph theoretical properties implying non-triviality of $w^{1,2,2}_{a,d}(\mG)/\mathring{w}^{1,2,2}_{a,d}(\mG)$, have been obtained in~\cite[\S~3]{SoaYam93} and~\cite{Pul10}, as well as --  for the case of $p=2$ -- in two series of papers by Jorgensen and Pearse, and by Keller and Lenz, conveniently surveyed in~\cite{JorPea11b} and~\cite{KelLen10}, respectively. 

One may introduce also in our case the quotient Banach space
$$b^p_{a,d}(\partial \mG):=w^{1,p,2}_{a,d}(\mG)/\mathring{w}^{1,p,2}_{a,d}(\mG).$$
If $\Tr$ denotes the canonical surjection of $w^{1,p,2}_{a,d}(\mG)$ onto $b^p_{a,d}(\partial\mG)$, one may consider the general functionals
$$\tilde{\Efun}_{p,q}(f):=\frac{1}{p}\|\mathcal I^T f\|^p_{\ell^p_a}+\frac{1}{q}\|\Tr f\|^q_{b^p_{a,d}(\partial \mV)},\qquad q>1.$$
Another possible approach relies upon the Gauss--Green-type formula for graphs developed in~\cite{JorPea09}, where the boundary sum may be exploited to treat discrete $p$-Laplacians with boundary conditions. 
%%One can expect that $\tilde{\Efun}_{p,q}$ is proper, lower semicontinuous and convex and the associated subdifferential is a discrete $p$-Laplacian with discrete Robin-type boundary conditions.

%{Ornstein--Uhlenbeck operator come a pag.~53 di~\cite{ChiFas10}. Il problema \`e che la dimostrazione si basa su una formula di Leibniz, ossia una formula di derivata del prodotto, che sembrerebbe non valere per $\mathcal I^T$.}

\subsection{The porous medium equation}\label{sec:4}

We briefly discuss the interplay between porous medium equation and $p$-heat equation on graphs. For the sake of simplicity, we only consider the unweighted case, i.e., $\mu=a\equiv 1$ and $\nu=d\equiv 1$. The porous medium equation {in the continuum} can be studied in the context of the theory of subdifferentials, see e.g.~\cite[Examples~III.6.C and~IV.6.B]{Sho97}; % or \cite[\S~10.1]{Vaz07}.
but it is known that \emph{in the continuous, 1-dimensional case} if $\psi$ solves the porous medium equation
$$\dot{\psi}=(|\psi|^{\pi-1}\psi)_{xx},$$
then there is $\varphi$ that satisfies the $p$-heat equation 
$$\dot{\varphi}=(|\varphi_x|^{p-2}\varphi_x)_x$$
for $p:=\pi+1$ and such that $\psi$ is its pressure (i.e., $\varphi_x=\psi$). We can prove a similar result in a special class of oriented bipartite graphs.

While the proof in is based on the theory of exact differential forms (see e.g.~\cite[\S~3.4.3]{Vaz07}), and it is not clear whether it has a pendant in our context, this argument roughly suggests that the correct space to discuss the porous medium equation is $\ell^2(\mE)$, which we can look at as the node space of the line graph ${\mG_L}$ of $\mG$ and consider as a \emph{pressure space}. {(Recall that the line graph of an unweighted undirected graph $\mG$  is the graph $\mG_L=(\mV_L,\mE_L)$ with node set $\mV_L:=\mE$ and such that the edge $(\me,\mf)\in \mE_L$ if and only if $\me,\mf$ are adjacent edges in $\mG$).}

Observe that bipartition induces a natural orientation: if $\mV=\mV_1\cup\mV_2$, then assume all edges to have initial endpoint in $\mV_1$ and terminal endpoint in $\mV_2$. We recall that a bipartite graph  is called \emph{semiregular} if all nodes in $\mV_i$ have same degree .

\begin{prop}
Let $\mG$ be uniformly locally finite and $p\ge 2$. Let $T>0$ and $f\in L^2(0,T;\ell^2(\mV))$. If $\varphi\in H^1(0,T;\ell^2(\mV))$ satisfies the $p$-heat equation
\[
\dot{\varphi}(t,\mv)=-{\mathcal I}(|\mathcal I^T \varphi|^{p-2}\mathcal I^T \varphi)(t,\mv)+f(t),\qquad t>0,\; \mv \in \mV,
\]
then $\psi:=\mathcal I^T \varphi\in H^1(0,T;\ell^2(\mE))$ satisfies
\[
\dot{\psi}(t,\me)=-{\mathcal I}^T{\mathcal I}(|\psi|^{\pi-1}\psi)(t,\me)+\mathcal I^T f(t),\qquad t>0,\; \me \in \mE,
\]
for $p=\pi+1$. In particular, $\psi$ satisfies the porous-medium-type equation (with potential)
\begin{equation}\label{pme-semireg}
\dot{\psi}(t,\me)=\Delta_{{\mG_L}}(|\psi|^{\pi-1}\psi)(t,\me)-(r+s)(|\psi|^{\pi-1}\psi)(t,\me)+\mathcal I^T f(t),\qquad t>0,\; \me \in \mE,
\end{equation}
provided ${\mG}$ is additionally $(r,s)$-semiregular bipartite and is oriented accordingly.
\end{prop}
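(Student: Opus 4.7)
The plan is straightforward: apply $\mathcal I^T$ to both sides of the $p$-heat equation, then in the semiregular bipartite case compute $\mathcal I^T\mathcal I$ explicitly. First I would verify the functional-analytic setup. By uniform local finiteness and Lemma~\ref{lemma:car11}, $\mathcal I$ (and hence, by duality, $\mathcal I^T$) is a bounded operator between $\ell^2(\mE)$ and $\ell^2(\mV)$; consequently $\psi:=\mathcal I^T\varphi\in H^1(0,T;\ell^2(\mE))$ with $\dot\psi=\mathcal I^T\dot\varphi$. Because $p\ge 2$ gives $\pi\ge 1$, the counting-measure embedding $\ell^2\hookrightarrow \ell^{2\pi}$ yields $\||\psi|^{\pi-1}\psi\|_{\ell^2}^{2}=\|\psi\|_{\ell^{2\pi}}^{2\pi}<\infty$ pointwise in $t$, so $\mathcal I^T\mathcal I(|\psi|^{\pi-1}\psi)\in \ell^2(\mE)$ and every term below lives where it should. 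Applying $\mathcal I^T$ to the $p$-heat equation and noting that $p-2=\pi-1$, one obtains
\[
\dot\psi(t,\me)=-\mathcal I^T\mathcal I(|\psi|^{\pi-1}\psi)(t,\me)+\mathcal I^T f(t),
\]
which is the first assertion.

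For the second assertion I would unfold $\mathcal I^T\mathcal I$ entrywise:
\[
(\mathcal I^T\mathcal I)_{\me,\mf}=\sum_{\mv\in \mV}\iota_{\mv\me}\iota_{\mv\mf}.
\]
The diagonal entry is $2$, coming from the two endpoints of $\me$. An off-diagonal entry vanishes unless $\me$ and $\mf$ share a common endpoint $\mv$, in which case there is exactly one nonzero summand with value $\iota_{\mv\me}\iota_{\mv\mf}$. Here the bipartite orientation becomes essential: when all edges are oriented from $\mV_1$ to $\mV_2$, the shared endpoint $\mv$ is either initial for both edges (if $\mv\in \mV_1$) or terminal for both (if $\mv\in \mV_2$), so $\iota_{\mv\me}\iota_{\mv\mf}=1$ in either case. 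Hence
\[
\mathcal I^T\mathcal I=2I+\mathcal A_{\mG_L},
\]
where $\mathcal A_{\mG_L}$ is the adjacency matrix of the line graph.

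To extract $\Delta_{\mG_L}=D_{\mG_L}-\mathcal A_{\mG_L}$, I would invoke $(r,s)$-semiregularity: every edge $\me=(\mv,\mw)$ with $\mv\in \mV_1$, $\mw\in \mV_2$ is adjacent in $\mG_L$ to exactly $(r-1)+(s-1)$ other edges, so $D_{\mG_L}=(r+s-2)I$ and therefore $\mathcal A_{\mG_L}=(r+s-2)I-\Delta_{\mG_L}$. Combining yields
\[
-\mathcal I^T\mathcal I=\Delta_{\mG_L}-(r+s)I,
\]
and substitution into the first identity gives exactly~\eqref{pme-semireg}.

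The only subtle point in the whole argument is the sign computation in $\mathcal I^T\mathcal I$: under a generic orientation one would instead obtain a signed version, and it is precisely the canonical bipartite orientation that forces all the $\pm 1$ contributions to be $+1$, producing a signless-Laplacian-type expression on the line graph that then combines cleanly with $D_{\mG_L}$ thanks to semiregularity. Everything else reduces to boundedness of $\mathcal I^T$ on $\ell^2$ and the chain of embeddings above.
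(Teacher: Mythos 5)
Your proof is correct and follows the same route as the paper: use Lemma~\ref{lemma:car11} (uniform local finiteness) to justify $\dot\psi=\mathcal I^T\dot\varphi$, apply $\mathcal I^T$ to the equation, and invoke the identity $\mathcal I^T\mathcal I=(r+s)\,\mathrm{Id}-\Delta_{\mG_L}$ for $(r,s)$-semiregular bipartite graphs with the bipartite orientation. The only difference is that the paper simply cites this identity from Mohar's survey, whereas you derive it entrywise (including the sign analysis forced by the bipartite orientation and the computation $D_{\mG_L}=(r+s-2)\,\mathrm{Id}$), which is a correct, self-contained verification of the cited fact.
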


Observe that~\eqref{pme-semireg} is a \emph{forward} porous medium-type equation (with potential) on the line graph $\mG_L$ of $\mG$. We have denoted by $\Delta_{{\mG_L}}$ the (linear) discrete Laplacian on $\mG_L$. We stress that \emph{the latter result is not independent of orientation of $\mG$}.

\begin{proof}
It is a direct consequence of Lemma~\ref{lemma:car11} that $\dot{\psi}=\mathcal I^T \dot{\varphi}$ if $\mG$ is uniformly locally finite. 

Let now $\mG$ be bipartite and oriented accordingly. The second assertion follows recalling that
$${\mathcal I}^T{\mathcal I}=(r+s){\rm Id}-\Delta_{{\mG_L}}$$
%${\mG_L}$ is $(r+s-2)$-regular 
whenever $\mG$ is $(r,s)$-semiregular, see e.g.~\cite[Proof of Thm.~3.9]{Moh91}.
\end{proof}

 Consequently, by Corollary~\ref{cor:bije} the following holds, since an infinite graph has infinite surface if $\nu\equiv 1$.

\begin{cor}
Let $T>0$. If in particular $\mG$ is a forest each of whose connected components is an infinite, uniformly locally finite tree, then for all $g\in L^2(0,T;\ell^2(\mE))$ and all $\psi_0\in \ell^2(\mE)$ there exists a unique solution $\psi\in H^1(0,T;\ell^2(\mE))$ to 
\begin{equation}
\tag{PME}
\left\{
\begin{array}{rcll}
\dot{\psi}(t,\mv)&=&{\mathcal I}^T{\mathcal I}(|\psi|^{\pi-1}\psi)(t,\mv)+g(t),\qquad &t\ge 0,\; \mv\in \mV,\\
\psi(0,\mv)&=&\psi_0(\mv),& \mv\in \mV.
\end{array}
\right.\end{equation}
\end{cor}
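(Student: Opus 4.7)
The plan is to reduce the Cauchy problem (PME) on $\ell^2(\mE)$ to the Cauchy problem for the discrete $p$-heat equation on $\ell^2(\mV)$ with $p:=\pi+1$, exploiting the fact that under the stated hypotheses the transposed incidence operator $\mathcal{I}^T$ is a topological isomorphism between the two spaces, and then transferring the solution provided by Theorem~\ref{thm:main} through $\mathcal{I}^T$ using the preceding proposition.

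The first step is to observe that $\mathcal{I}^T:\ell^2(\mV)\to\ell^2(\mE)$ is a bounded linear bijection. Each connected component of $\mG$ is an infinite, uniformly locally finite tree, and since $\nu\equiv 1$, each component has infinite surface; Corollary~\ref{cor:bije} then yields that $\mathcal{I}^T$ restricted to any component is an isomorphism, and uniform local finiteness upgrades these componentwise isomorphisms to a single topological isomorphism of the direct sum. Moreover, since $p=\pi+1\geq 2$, the elementary embedding $\ell^2(\mE)\subseteq \ell^p(\mE)$ (valid for counting measures with bounded weights) shows that any $\varphi\in\ell^2(\mV)$ with $\mathcal{I}^T\varphi\in\ell^2(\mE)$ automatically lies in $w^{1,p,2}_{a,d}(\mV)$.

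Given $\psi_0\in \ell^2(\mE)$ and $g\in L^2(0,T;\ell^2(\mE))$, I would then set $\varphi_0:=(\mathcal{I}^T)^{-1}\psi_0\in w^{1,p,2}_{a,d}(\mV)$ and $f:=(\mathcal{I}^T)^{-1}g\in L^2(0,T;\ell^2(\mV))$, and invoke Theorem~\ref{thm:main}.(1) to obtain a unique $\varphi\in H^1(0,T;\ell^2(\mV))\cap L^\infty(0,T;w^{1,p,2}_{a,d}(\mV))$ solving $\rm(HE^N_p)$ with data $(\varphi_0,f)$. Defining $\psi:=\mathcal{I}^T\varphi$, one has $\psi\in H^1(0,T;\ell^2(\mE))$ (using boundedness of $\mathcal{I}^T$ from Lemma~\ref{lemma:car11} to differentiate under $\mathcal{I}^T$), and the preceding proposition gives directly that $\psi$ solves (PME) with $\psi(0)=\psi_0$ and inhomogeneity $g$. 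Uniqueness follows by the same correspondence read backwards: any solution $\tilde\psi\in H^1(0,T;\ell^2(\mE))$ of (PME) produces $\tilde\varphi:=(\mathcal{I}^T)^{-1}\tilde\psi\in H^1(0,T;\ell^2(\mV))$ for which
\begin{equation*}
\mathcal{I}^T\!\left[\,\dot{\tilde\varphi}+\mathcal{I}\bigl(|\mathcal{I}^T\tilde\varphi|^{p-2}\mathcal{I}^T\tilde\varphi\bigr)-f\,\right]=0\quad\text{for a.e.\ }t\in[0,T],
\end{equation*}
so that injectivity of $\mathcal{I}^T$ (each component is a tree, hence has no cycles) forces the bracketed expression to vanish identically; $\tilde\varphi$ thus solves $\rm(HE^N_p)$ with data $(\varphi_0,f)$, and the uniqueness part of Theorem~\ref{thm:main} yields $\tilde\varphi=\varphi$, whence $\tilde\psi=\psi$.

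I expect the main obstacle to be the ``uniform across components'' claim upgrading the componentwise isomorphism of Corollary~\ref{cor:bije} into a genuine topological isomorphism of the full direct sum $\ell^2(\mV)=\bigoplus_i\ell^2(\mV_i)\to\bigoplus_i\ell^2(\mE_i)=\ell^2(\mE)$; this is where uniform local finiteness (as opposed to local finiteness alone of each component) must be used to control the norms of the inverses uniformly over the components. Once that is in hand, the remainder of the argument is essentially a bookkeeping exercise transferring the well-posedness statement of Theorem~\ref{thm:main} through $\mathcal{I}^T$.
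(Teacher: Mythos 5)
Your argument is correct and is essentially the paper's own: the paper obtains the corollary precisely by observing that, since $\nu\equiv 1$ makes each infinite connected component have infinite surface, Corollary~\ref{cor:bije} turns $\mathcal I^T$ into an isomorphism of $\ell^2(\mV)$ onto $\ell^2(\mE)$, and then transferring the well-posedness of the discrete $p$-heat equation (Theorem~\ref{thm:main}, with $p=\pi+1$) through the preceding proposition. Your explicit pullback of the data $(\psi_0,g)$, the push-forward of the solution, and the uniqueness argument via injectivity of $\mathcal I^T$ are exactly the bookkeeping the paper leaves implicit (the sign in front of $\mathcal I^T\mathcal I$ that your computation produces is the one of the proposition, the $+$ sign in the displayed (PME) being evidently a slip in the statement rather than a flaw in your argument).
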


\section{{Appendix: A reminder of the theory of subdifferentials}}

{
The community of nonlinear analysts and that of functional analysts on graphs seem to have grown more and more disjoint in recent years. We have thus chosen to recollect some basic results about nonlinear semigroups generated by subdifferentials for the reader less familiar with this theory.}

\begin{assum}\label{assum:append}
Throughout this section we consider the following functional setting.
\begin{itemize}
\item $V$ is a reflexive Banach space.
\item $H$ is a Hilbert space.
\item $V$ is densely and continuously embedded in $H$.
\item $\Efun : V \to \mathbb [0,\infty)$ is a convex, Fréchet differentiable functional (with derivative denoted by $\Efun'$) such that $\Efun(0)=0$.
\end{itemize}
\end{assum}
We can and will extend $\Efun$ to the whole $H$ by $+\infty$: With an abuse of notation we denote this extension again by $\Efun$. Then the \emph{subdifferential} of $\Efun:H\to [0,+\infty]$ at $f\in H$ is defined (cf.\ e.g.~\cite[Def.\ at p.\ 81]{Sho97}) as the  set
\[
\partial \Efun(f):=
\left\{
\begin{array}{ll}
\{g\in H:(g|\varphi-f)_H \le \Efun (\varphi)-\Efun(f) \; \forall \varphi\in H\},\qquad &\hbox{if }f\in V,\\
\emptyset,\qquad &\hbox{if }f\in H\setminus V,\\
\end{array}
\right.
\]
However, under the above assumptions the subdifferential of $\Efun$ at each $f\in V$  is by~\cite[Prop.~II.7.6]{Sho97} either empty or a singleton. Thus, we can regard $\partial\Efun$ as a (single-valued) operator from
\[
D(\partial \Efun):=\{f\in V:\partial\Efun(f)\not=\emptyset \}
\]
to $H$, and we denote with a slight abuse of notation
\[
\partial \Efun (f)\equiv\{\partial\Efun(f)\},\qquad f\in D(\partial \Efun).
\]

{
Determining a subdifferential is in general a tedious task.  Due to our assumption of Fréchet differentiability of $\Efun$, however, a subdifferential can be described more easily by means of the following result. While it seems to be known, the only precise reference we are aware of is~\cite[Lemma~2.8.9]{Nit10}.
\begin{lemma}\label{Lemmanittka}
The subdifferential of $\Efun$ can be equivalently described by
\begin{equation}\label{nittefun}
\begin{array}{rcl}
D(\partial\Efun)&=&\{f\in V:\exists g\in H \hbox{ s.t.\ }\Efun'(f)h=(g|h)_H\;\; \forall h\in V\},\\
\partial\Efun (f)&=&g.
\end{array}
\end{equation}
\end{lemma}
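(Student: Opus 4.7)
The plan is to verify the two inclusions by translating the subgradient inequality into a directional derivative statement, using the fact that on $V$ the Fréchet derivative $\Efun'(f)$ coincides with the Gâteaux derivative.

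First I would prove the inclusion ``$\subset$''. Fix $f\in D(\partial\Efun)$ and let $g:=\partial\Efun(f)\in H$. By the definition of the subdifferential applied to $\varphi:=f+th$ with $h\in V$ and $t>0$, one has $t(g|h)_H\le \Efun(f+th)-\Efun(f)$. Dividing by $t$ and passing to the limit $t\to 0^+$, Fréchet differentiability of $\Efun$ on $V$ yields $(g|h)_H\le \Efun'(f)h$. Replacing $h$ by $-h$ (which is again in $V$) gives the reverse inequality, hence $\Efun'(f)h=(g|h)_H$ for every $h\in V$. Thus $f$ lies in the set on the right-hand side of~\eqref{nittefun}, with the same element $g$.

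For the reverse inclusion, fix $f\in V$ and suppose $g\in H$ satisfies $\Efun'(f)h=(g|h)_H$ for all $h\in V$. I need to check the subgradient inequality $(g|\varphi-f)_H\le \Efun(\varphi)-\Efun(f)$ for every $\varphi\in H$. If $\varphi\in H\setminus V$, the right-hand side equals $+\infty$ (by the extension convention) and the inequality is trivial. If $\varphi\in V$, convexity of $\Efun$ on $V$ implies, for every $t\in(0,1)$,
\[
\Efun(f+t(\varphi-f))-\Efun(f)\le t\bigl(\Efun(\varphi)-\Efun(f)\bigr).
\]
Dividing by $t$ and letting $t\to 0^+$, Fréchet differentiability yields $\Efun'(f)(\varphi-f)\le \Efun(\varphi)-\Efun(f)$. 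Since $\varphi-f\in V$, the assumption on $g$ identifies the left-hand side with $(g|\varphi-f)_H$, so $g\in\partial\Efun(f)$ and $f\in D(\partial\Efun)$.

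The only subtlety that needs attention is the interplay between the extension of $\Efun$ from $V$ to $H$ by $+\infty$ and the fact that the test elements $f+th$ and $f+t(\varphi-f)$ must remain in $V$ for the Fréchet derivative to produce information: this is automatic because $f,h,\varphi\in V$ in both directions of the argument. No reflexivity of $V$ or density in $H$ is needed for the equivalence itself; these hypotheses, together with the single-valuedness of $\partial\Efun$ recorded right after~\cite[Prop.~II.7.6]{Sho97}, are only used elsewhere to justify treating $\partial\Efun(f)$ as a single element of $H$ rather than a set.
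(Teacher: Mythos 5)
Your proof is correct. Note that the paper itself does not prove Lemma~\ref{Lemmanittka}; it only cites \cite[Lemma~2.8.9]{Nit10}, so there is no in-paper argument to compare against. Your two-inclusion argument --- testing the subgradient inequality along rays $f+th$, $h\in V$, to get $\Efun'(f)h=(g|h)_H$, and conversely using the convexity inequality $\Efun(f+t(\varphi-f))-\Efun(f)\le t\bigl(\Efun(\varphi)-\Efun(f)\bigr)$ together with the trivial case $\varphi\in H\setminus V$ --- is exactly the standard identification of the subdifferential of a convex functional with the Hilbert-space representation of its Gâteaux/Fréchet derivative, and your closing observation that density of $V$ in $H$ enters only to make the representing $g$ (and hence $\partial\Efun(f)$) unique, not to establish the set equality itself, is also accurate.
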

Unlike in the linear case, in the world of nonlinear evolution equations several discording techniques for finding solutions exist.  In particular, subdifferentials of proper, convex, lower semicontinuous functionals are (nonlinear) $m$-accretive operators, cf.~\cite[Prop.~IV.2.2]{Sho97}. Therefore  the celebrated Crandall--Liggett Theorem, the nonlinear pendant  of the theorem of Lumer--Phillips, can be applied to find solutions of abstract Cauchy problems associated with subdifferentials in terms of a semigroup of nonlinear operators. Further significant information abouth solutions can be obtained applying more directly the properties of subdifferentials and using Hilbert space methods, in particular when considering  inhomogeneous abstract Cauchy problems. 
}

{In the following we summarize several different results obtained with different methods, and in particular three celebrated results by Brezis, Crandall--Liggett, and Kato, cf.~\cite[Thm.~IV.4.1, Thm.~IV.4.3, and Thm.~IV.8.2]{Sho97} or~\cite[Théo.~3.1, Théo.~3.2, Théo.~3.3, Théo.~3.6]{Bre73}.
\begin{theo}\label{thm:brezkato}
Let $T>0$. Then for all $f\in L^2(0,T;H)$ and all $f_0\in H$ there exists a unique $\varphi\in C([0,T];H)$ which is differentiable for a.e.\ $t\in [0,T]$ and such that
\begin{equation}
\left\{
\begin{array}{rcll}
\varphi(t)&\in& D(\partial \Efun), &\hbox{for a.e. }t\in [0,T],\\
\dot{\varphi}(t)+\partial \Efun \varphi (t)&=& f(t),\qquad &\hbox{for a.e. }t\in [0,T],\\
\varphi(0)&=&f_0.
\end{array}
\right.\end{equation}
Furthermore, $\Efun \circ u \in L^1(0,T)$, and moreover 
\begin{itemize}
\item if $f_0\in V$, then $\Efun \circ u \in L^\infty(0,T)$ and $u\in H^1 (0,T;H)$;
\item if $f_0\in D(\partial \Efun)$, then $u\in W^{1,\infty}(0,T;H)$ and $u$ is right differentiable for all $t\ge 0$,
\item if $f\equiv 0$, then the mapping defined by
\[
e^{-t\Efun }:H\ni f_0\mapsto u(t)\in H,\qquad t\ge 0,
\]
forms a nonlinear contractive $C_0$-semigroup, i.e., a strongly continuous family of (in general, nonlinear) contractions on $H$ that satisfy the semigroup law. These operators can also be obtained by
\begin{equation}\label{eq:exp-formula}
e^{-t\partial \Efun} f_0=\lim_{n\to \infty} J_\frac{\lambda}{n}^n(\Efun) f_0,\qquad t\ge 0,
\end{equation}
where the operators $J_\cdot(\Efun)$ are defined by
\begin{equation}\label{resolv-nonl}
J_\lambda(\Efun):=({\rm Id}+\lambda \partial \Efun)^{-1},\qquad \lambda \in (0,\infty).
\end{equation}
\end{itemize}
\end{theo}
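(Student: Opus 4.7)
The plan is to prove this abstract well-posedness result via Moreau--Yosida regularization of $\Efun$, recovering the nonlinear semigroup in the homogeneous case through the exponential formula and treating the inhomogeneous problem via energy estimates.

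First I would verify that $\partial\Efun$ is maximal monotone. Monotonicity follows directly from the subdifferential inequality. For maximality (in the sense that $\mathrm{Ran}(\mathrm{Id} + \lambda\partial\Efun) = H$ for every $\lambda > 0$) I would fix $g \in H$, observe that $h \mapsto \Efun(h) + \tfrac{1}{2\lambda}\|h - g\|_H^2$ is proper, strictly convex, lower semicontinuous and coercive on $H$, hence attains a unique minimizer $f = J_\lambda(\Efun)\,g$, and read off the Euler--Lagrange relation $g - f \in \lambda\,\partial\Efun(f)$. This construction yields the resolvent \eqref{resolv-nonl} and simultaneously shows $J_\lambda$ is a contraction on $H$.

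Next I would introduce the Moreau--Yosida regularization
\[
\Efun_\lambda(f) := \inf_{g \in H}\Big\{\Efun(g) + \tfrac{1}{2\lambda}\|f - g\|_H^2\Big\} = \Efun(J_\lambda f) + \tfrac{1}{2\lambda}\|f - J_\lambda f\|_H^2,
\]
which is convex and Fréchet differentiable on all of $H$ with $(1/\lambda)$-Lipschitz gradient $\partial\Efun_\lambda = \tfrac{1}{\lambda}(\mathrm{Id} - J_\lambda)$, and satisfies $\Efun_\lambda(f) \nearrow \Efun(f)$ and $\partial\Efun_\lambda(f) \to \partial\Efun(f)$ as $\lambda \to 0$ for $f \in D(\partial\Efun)$. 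Because $\partial\Efun_\lambda$ is globally Lipschitz, Banach's fixed point theorem applied to the mild formulation yields a unique $u_\lambda \in C^1([0,T];H)$ solving the regularized Cauchy problem $\dot u_\lambda + \partial\Efun_\lambda u_\lambda = f$, $u_\lambda(0) = f_0$.

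The heart of the argument is to derive $\lambda$-uniform bounds and pass to the limit. Testing the regularized equation against $\partial\Efun_\lambda u_\lambda$ and using the chain rule $\tfrac{d}{dt}\Efun_\lambda(u_\lambda) = (\partial\Efun_\lambda u_\lambda \mid \dot u_\lambda)_H$ together with Young's inequality bounds $\|\dot u_\lambda\|_{L^2(0,T;H)}$ and $\sup_t \Efun_\lambda(u_\lambda(t))$ in terms of $\|f\|_{L^2(0,T;H)}$ and $\Efun(f_0)$ when $f_0 \in V$, while testing against $u_\lambda$ alone and invoking Gronwall controls $\|u_\lambda\|_{L^\infty(0,T;H)}$ for general $f_0 \in H$. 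Monotonicity yields that $(u_\lambda)$ is Cauchy in $C([0,T];H)$: writing $u_\lambda = J_\lambda u_\lambda + \lambda\,\partial\Efun_\lambda u_\lambda$ and using monotonicity of $\partial\Efun$ at the points $J_\lambda u_\lambda$, $J_\mu u_\mu$ lets one estimate the cross-term in $\tfrac{1}{2}\tfrac{d}{dt}\|u_\lambda - u_\mu\|_H^2$ by $(\lambda + \mu)\|\partial\Efun_\lambda u_\lambda\|_H \|\partial\Efun_\mu u_\mu\|_H$. Hence $u_\lambda \to u$ strongly in $C([0,T];H)$, $\dot u_\lambda \rightharpoonup \dot u$ and $\partial\Efun_\lambda u_\lambda \rightharpoonup \chi$ weakly in $L^2(0,T;H)$; demiclosedness of the graph of the maximal monotone operator $\partial\Efun$ identifies $\chi(t) = \partial\Efun u(t)$ a.e., and lower semicontinuity of $\Efun$ on $H$ gives $\Efun \circ u \in L^1(0,T)$ (and $L^\infty$ if $f_0 \in V$). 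Uniqueness is immediate: subtracting two solutions and testing against their difference yields $\tfrac{d}{dt}\|u_1 - u_2\|_H^2 \le 0$ by monotonicity.

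For the improved regularity when $f_0 \in D(\partial\Efun)$ I would obtain an extra estimate by exploiting that $t \mapsto \partial\Efun_\lambda u_\lambda(t)$ is Lipschitz for each $\lambda$, passing difference quotients through the monotone equation to get a $\lambda$-uniform $L^\infty$-bound on $\dot u_\lambda$; this upgrades the limit to $u \in W^{1,\infty}(0,T;H)$ and gives right-differentiability at every $t \ge 0$ via the standard monotone semigroup argument. When $f \equiv 0$, uniqueness lets us define $e^{-t\partial\Efun}f_0 := u(t)$; the semigroup law follows from uniqueness, contractivity from monotonicity, and strong continuity at $t=0$ from the bound $\|u(t) - f_0\|_H \le \int_0^t \|\dot u\|_H$ combined with density of $D(\partial\Efun)$ in $H$. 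The exponential formula \eqref{eq:exp-formula} is Crandall--Liggett's convergence of the implicit Euler scheme, which can be obtained by comparing the discrete iterates $J_{t/n}^n f_0$ with $u_{t/n}(t)$ using the same resolvent identity and monotonicity as above. The principal obstacle is the passage $\lambda \to 0$: identifying the weak limit of $\partial\Efun_\lambda u_\lambda$ with $\partial\Efun u$ relies crucially on the non-trivial demiclosedness of maximal monotone graphs in $H \times H$, and deriving the uniform energy estimates demands careful use of the chain rule for $\Efun_\lambda$ along absolutely continuous curves.
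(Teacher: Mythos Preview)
Your Moreau--Yosida regularization argument is the classical route and is correct in outline. However, the paper does not actually prove this theorem: it is stated in the Appendix as a summary of known results, with the sentence preceding it reading ``In the following we summarize several different results obtained with different methods, and in particular three celebrated results by Brezis, Crandall--Liggett, and Kato, cf.~[Sho97, Thm.~IV.4.1, Thm.~IV.4.3, and Thm.~IV.8.2] or [Bre73, Th\'eo.~3.1, Th\'eo.~3.2, Th\'eo.~3.3, Th\'eo.~3.6].'' There is no proof in the paper to compare against.

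That said, your sketch is essentially the proof found in Brezis's monograph (the cited Th\'eor\`emes~3.1--3.6): maximal monotonicity via the Moreau envelope, the regularized problems $\dot u_\lambda + \partial\Efun_\lambda u_\lambda = f$, uniform energy estimates, the Cauchy argument in $C([0,T];H)$ from monotonicity, and identification of the limit via demiclosedness. The Crandall--Liggett exponential formula is the one additional ingredient, and you handle it the standard way. So your approach is not merely correct but coincides with what the cited references do; you have reconstructed the proof the paper is pointing to rather than diverging from it.
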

An alternative approach to investigate well-posedness of nonlinear evolution equations goes back to~\cite{Lio69} and has been substantially enhanced by Chill and coauthors in recent years, cf.~\cite{ChiFas10} for a comprehensive exposition. The following collects~\cite[Thm.~6.1 and~\S~6.4]{ChiFas10}.
\begin{theo}\label{theo:galerkinsch}
Additionally to our standing assumptions, let 
\begin{itemize}
\item $V$ be separable,
\item $\Efun$ be \emph{coercive}, i.e., the sublevel sets
\begin{equation*}
\label{eq:levelsets}
\{f\in V: \Efun(f)\le \alpha \}
\end{equation*}
be bounded with respect to the norm of $V$ for all $\alpha\in \mathbb R$, and 
\item the Fréchet derivative $\Efun'$ map bounded sets of $V$ into bounded sets of $V'$. 
\end{itemize}
Then for all $f\in L^2(0,T;H)$ and all $f_0\in V$ there exists a unique $\varphi \in L^\infty(0,T;V)\cap  H^1 (0,T;H)$ such that
\begin{equation}\label{(ACP)}
\left\{
\begin{array}{rcll}
\varphi(t)&\in& D(\partial \Efun), &\hbox{for a.e. }t\in [0,T],\\
\dot{\varphi}(t)+\partial \Efun \varphi (t)&=& f(t),\qquad &\hbox{for a.e. }t\in [0,T],\\
\varphi(0)&=&f_0.
\end{array}
\right.\end{equation}
Furthermore, the energy inequality
\begin{equation}
\label{eq:energineq}
\int_0^t \|\dot{\varphi}(s)\|^2_H ds+ \Efun( \varphi(t)) \le \Efun (f_0) +\int_0^t (f(s)|\dot{\varphi}(s))_H ds,\qquad t\in [0,T],
\end{equation}
is satisfied. If in particular $V=H$ is finite dimensional and $f\equiv 0$, then the solution $\varphi$ satisfies the further energy inequality
\begin{equation}
\label{eq:degiorgi}
\frac{d}{dt} \Efun(\varphi(t)) \le -\frac12 \|\dot{\varphi}(t)\|^2_H-\frac12 \|\partial \Efun {\varphi}(t)\|^2_H,\qquad t\ge 0.
\end{equation}
\end{theo}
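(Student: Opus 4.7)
The plan is to apply the Faedo--Galerkin scheme, mirroring the classical proof for abstract evolution equations driven by monotone operators. Separability of $V$ allows one to fix a total family $(e_k)_{k\in\mathbb N}\subset V$ and set $V_n:={\rm span}\{e_1,\ldots,e_n\}$; let $P_n$ denote the $H$-orthogonal projection onto $V_n$. On each $V_n$ I would seek a map $\varphi_n:[0,T]\to V_n$ solving
\[
(\dot\varphi_n(t)\mid v)_H+\Efun'(\varphi_n(t))\,v=(f(t)\mid v)_H,\qquad v\in V_n,\qquad \varphi_n(0)=P_n f_0.
\]
Written in coordinates this is an ODE system with continuous right-hand side (thanks to Fréchet differentiability of $\Efun$ combined with the hypothesis that $\Efun'$ sends bounded sets into bounded sets) and Carathéodory dependence on $t$; Carathéodory's theorem then yields a local solution.

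Next come the a priori estimates. Testing the Galerkin identity against $v=\dot\varphi_n(t)\in V_n$ and using the chain rule $\Efun'(\varphi_n)\dot\varphi_n=\tfrac{d}{dt}\Efun(\varphi_n)$ (valid by Fréchet differentiability), followed by Young's inequality on the right-hand side, gives
\[
\tfrac12\|\dot\varphi_n(t)\|_H^2+\tfrac{d}{dt}\Efun(\varphi_n(t))\le\tfrac12\|f(t)\|_H^2.
\]
Integrating in time yields uniform bounds on $\varphi_n$ in $H^1(0,T;H)$ and on $\Efun\circ\varphi_n$ in $L^\infty(0,T)$; coercivity of $\Efun$ upgrades the latter to a uniform bound in $L^\infty(0,T;V)$, ruling out blow-up and giving global existence on $[0,T]$. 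By reflexivity of $V$ and Banach--Alaoglu, a subsequence satisfies $\varphi_n\rightharpoonup\varphi$ in $H^1(0,T;H)$ and weakly-$*$ in $L^\infty(0,T;V)$. The bounded-to-bounded hypothesis on $\Efun'$ further yields that $\Efun'(\varphi_n)$ is bounded in $L^2(0,T;V')$ and hence converges weakly to some $\chi$ along a further subsequence.

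The main obstacle is identifying $\chi=\Efun'(\varphi)$, which I would handle via Minty's monotonicity trick. Since $\partial\Efun=\Efun'$ on $V$ (Lemma~\ref{Lemmanittka}) is monotone,
\[
\int_0^T\bigl\langle\Efun'(\varphi_n)-\Efun'(w),\,\varphi_n-w\bigr\rangle\,dt\ge 0\qquad\forall w\in L^2(0,T;V).
\]
The Galerkin identity combined with integration by parts gives
\[
\int_0^T\langle\Efun'(\varphi_n),\varphi_n\rangle\,dt=\int_0^T(f\mid\varphi_n)_H\,dt+\tfrac12\|P_n f_0\|_H^2-\tfrac12\|\varphi_n(T)\|_H^2,
\]
and taking $\limsup$ using weak lower semicontinuity of $\|\cdot\|_H$ yields $\limsup\int_0^T\langle\Efun'(\varphi_n),\varphi_n\rangle\le\int_0^T\langle\chi,\varphi\rangle\,dt$. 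Substituting into the Minty inequality and choosing $w=\varphi\pm\lambda h$, $\lambda\to 0^+$, forces $\chi=\Efun'(\varphi)$; hence $\varphi$ solves the abstract Cauchy problem.

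Uniqueness follows from monotonicity: the difference of two solutions, tested against itself in $H$, obeys $\tfrac{d}{dt}\tfrac12\|\varphi-\tilde\varphi\|_H^2=-\langle\partial\Efun(\varphi)-\partial\Efun(\tilde\varphi),\varphi-\tilde\varphi\rangle\le 0$, which with identical initial data forces coincidence. The energy inequality~\eqref{eq:energineq} is obtained by integrating the Galerkin estimate and passing to the limit, using lower semicontinuity of $\Efun$ on $V$ (convex and continuous) and of the $L^2$-norm of $\dot{(\cdot)}$ under weak convergence in $H^1(0,T;H)$. Finally, for $V=H$ finite-dimensional and $f\equiv 0$, the equation $\dot\varphi=-\partial\Efun(\varphi)$ holds pointwise and testing against $\dot\varphi$ gives $\tfrac{d}{dt}\Efun(\varphi)=-\|\dot\varphi\|_H^2=-\|\partial\Efun(\varphi)\|_H^2$; splitting this identity symmetrically as $-\tfrac12\|\dot\varphi\|_H^2-\tfrac12\|\partial\Efun(\varphi)\|_H^2$ yields~\eqref{eq:degiorgi}.
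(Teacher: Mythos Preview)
Your proposal is correct and follows essentially the same Galerkin scheme that the paper sketches (the paper itself defers the full argument to~\cite[Thm.~6.1 and \S~6.4]{ChiFas10} and only outlines the five steps: Galerkin discretisation, Carath\'eodory existence on $V_n$, a~priori bounds, weak compactness plus identification of the limit, and uniqueness by accretivity). You actually supply more detail than the paper does, in particular the Minty monotonicity trick for identifying $\chi=\Efun'(\varphi)$, which the paper's sketch leaves implicit.

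One small technical point deserves attention: you take $\varphi_n(0)=P_n f_0$ with $P_n$ the $H$-orthogonal projection, whereas the paper explicitly picks a sequence $f_{0n}\in V_n$ with $f_{0n}\to f_0$ \emph{in $V$}. This matters when passing to the limit in the energy inequality: your Galerkin estimate produces $\Efun(P_n f_0)$ on the right-hand side, and to recover $\Efun(f_0)$ in~\eqref{eq:energineq} you need $\limsup_n \Efun(P_n f_0)\le \Efun(f_0)$. Since $\Efun$ is merely lower semicontinuous on $H$ and the $H$-projection only guarantees $P_n f_0\to f_0$ in $H$, this limsup bound is not automatic. Choosing instead $f_{0n}\to f_0$ in $V$ (possible because $\bigcup_n V_n$ is dense in $V$) and using continuity of $\Efun$ on $V$ fixes this at no extra cost.
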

The main idea of the proof is to consider the weak formulation 
\[
(\dot{\varphi}(t)|h)_H+\partial \Efun \varphi (t)h= \left(f(t)|h \right)_H,\qquad \hbox{for a.e. }t\in [0,T]\hbox{ and all }h\in V,
\]
or rather
\[
(\dot{\varphi}(t)|h)_H+\Efun'( \varphi (t))h= \left(f(t)|h \right)_H,\qquad \hbox{for a.e. }t\in [0,T]\hbox{ and all }h\in V,
\]
of the differential equation, and then to discretize it by applying the Galerkin scheme. Let us briefly sketch the main steps of this argument, since they will play a r\^ole in Section~\ref{sec:3}:
\begin{enumerate}[(1)]
\item since $V$ is separable, one can take
\begin{itemize}
\item  a total sequence $(e_n)_{n\in \mathbb N}$ and hence the sequence of finite dimensional spaces $V_n:={\rm span}\{e_m:m\le n\}$ (with the norm induced by $H$) such that $\bigcup_{n\in \mathbb N}V_n$ is dense in $V$ and
\item a sequence $(f_{0n})_{n\in \mathbb N}$ such that $f_{0n}\in V_n$ for all $n\in \mathbb N$ and $\lim_{n\to \infty} f_{0n}=f_0$ in $V$;
\end{itemize} 
\item for all $n\in \mathbb N$, consider $\Efun_{|V_n}$, take its subdifferential as in~\eqref{nittefun}, but w.r.t.\ test functions in $V_n$; and use Carathéodory's Theorem to solve
\begin{equation*}
\left\{
\begin{array}{rcll}
\dot{\varphi}_n(t)+\partial \Efun_{|V_n} \varphi_n (t)&=& P_n f(t),\qquad &\hbox{for a.e. }t\in [0,T],\\
\varphi_n (0)&=&f_{0n},
\end{array}
\right.\end{equation*}
where we denote by $P_n$ the orthogonal projection of $H$ onto $V_n$;
\item  for all $n\in \mathbb N$, show that all the solutions $\phi_n$ admit uniform a priori bounds, which in turn show that the sequence $(\phi_n)_{n\in \mathbb N}$ is bounded  in $L^\infty(0,T;V)\cap  H^1 (0,T;H)$ and also that $(\Efun (\phi_n))_{n\in \mathbb N}$ is bounded in $L^\infty(0,T;V')$;
\item extract a converging subsequence and show that its limit is a solution of the abstract Cauchy problem~\eqref{(ACP)}  with the claimed regularity properties and satisfying the energy inequality~\eqref{eq:energineq};
\item use accretivity of $\partial \Efun$ to prove that there cannot be further solutions.
\end{enumerate}
The latter energy inequality~\eqref{eq:degiorgi}, which in~\cite[\S~6.4]{ChiFas10} is reported to be due to De Giorgi, shows that either the solution reaches in finite time a ground state, or its energy decreases indefinitely.}

\begin{rem}
Let us recall the connection between the linear theory of quadratic forms and the nonlinear theory we are summarizing in this section: If under our standing assumptions ${\mathfrak h}:V\times V\to \mathbb R$ is a bounded, coercive, symmetric, bilinear form (that is, a \emph{quadratic form}), then 
\[
\Efun:V\ni f\mapsto \frac12 {\mathfrak h}(f,f)\in [0,\infty)
\]
defines a convex, coercive, Fréchet differentiable functional and all sublevel sets are bounded in $V$, hence it satisfies the assumptions of all results in this section. Moreover, one sees directly that the Fréchet derivative of $\Efun$ is given by
\[
\Efun'(f)g={\mathfrak h}(f,g),\qquad f,g\in V.
\]
Therefore, by definition the subdifferential of $\Efun$ is precisely the linear operator associated with $\mathfrak h$.
\end{rem} 

{Finally, making use of semigroup theory it is possible to characterize closed convex sets of $H$ that are left invariant over time, by a nonlinear generalization of the Beurling--Deny criteria 
due to Barthélemy. The following combines~\cite[Théo~1.1 and Cor.~2.2]{Bar96},~\cite[Prop.~4.5]{Bre73}, and~\cite[Cor.~3.7]{CipGri03}.
\begin{lemma}\label{Bar96}
Let $C$ be a closed convex subset of $H$ and denote by $P_C$ the orthogonal projection of $H$ onto $C$. Then the following assertions are equivalent.
\begin{enumerate}[(i)]
\item $C$ is left invariant  under $J_\lambda(\Efun)$ for all $\lambda>0$.
\item $C$ is left invariant  under $e^{-t\partial \Efun}$ for all $t\ge 0$.
\item $\Efun (P_C f_0)\le \Efun (f_0)$ for all $f_0\in V$.
\end{enumerate}
In particular:
\begin{itemize}
\item The semigroup $(e^{-t\partial \Efun})_{t\ge 0}$ is order preserving, i.e.,
\begin{equation}
\label{eq:order-pres}
f_0\le g_0 \quad \Rightarrow\quad e^{-t{\mathcal L}}f_0\le e^{-t{\mathcal L}}g_0\qquad \forall t\ge 0,\;\forall f_0,g_0\in H,
\end{equation}
if and only if
\begin{equation*}
\label{cipgrieq1}
\Efun (f_0 \wedge g_0) + \Efun (f_0 \vee g_0) \leq \Efun (f_0) + \Efun (g_0)\quad\hbox{for all } f_0,g_0 \in V.
 \end{equation*}
\item If $H=L^2(X;\lambda)$ for a $\sigma$-finite measure space $(X,\lambda)$, then $(e^{-t\partial \Efun})_{t\ge 0}$ is contractive with respect to the norm of $L^\infty(X,\lambda)$, i.e., 
$$\|e^{-t\partial \Efun}f_0-e^{-t\partial \Efun}g_0\|_\infty \le \|f_0-g_0\|_\infty \qquad \forall t\ge 0,\;\forall f_0,g_0\in H,$$
if and only if
\begin{eqnarray*}\label{cipgrieq2}
&&\Efun \left(\frac{g_0+(f_0-g_0+1)_+}{2} + \frac{g_0-(f_0-g_0-1)_-}{2}\right)\\
&&\qquad+\Efun \left(\frac{f_0-(f_0-g_0+1)_+}{2} + \frac{f_0+(f_0-g_0-1)_-}{2}\right) \leq \Efun (f_0) + \Efun (g_0)\quad\hbox{for all } f_0,g_0 \in V.
 \end{eqnarray*}
\end{itemize} 
\end{lemma}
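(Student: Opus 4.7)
The plan is to establish the three-way equivalence (i)$\Leftrightarrow$(ii)$\Leftrightarrow$(iii) by the cycle (iii)$\Rightarrow$(i)$\Rightarrow$(ii)$\Rightarrow$(iii), then deduce the two corollaries by specializing to judicious closed convex sets in the product Hilbert space $H\oplus H$.

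For (iii)$\Rightarrow$(i), the idea is to exploit the variational characterization $g=J_\lambda(\Efun)f_0$ $\Leftrightarrow$ $g$ is the unique minimizer of the strictly convex functional $h\mapsto \Efun(h)+\frac{1}{2\lambda}\|h-f_0\|_H^2$, which follows from~\eqref{resolv-nonl} by completion of squares. Fix $f_0\in C$, set $g:=J_\lambda(\Efun)f_0$ and $\tilde g:=P_C g$. Condition (iii) gives $\Efun(\tilde g)\le \Efun(g)$; nonexpansiveness of $P_C$ together with $P_C f_0=f_0$ gives $\|\tilde g-f_0\|_H=\|P_C g-P_C f_0\|_H\le \|g-f_0\|_H$. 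Adding these, $\tilde g$ does at least as well as $g$ in the minimization, and strict convexity forces $\tilde g=g$, whence $g\in C$.

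For (i)$\Rightarrow$(ii), I would invoke the Crandall--Liggett exponential formula~\eqref{eq:exp-formula}: if $f_0\in C$, iterating (i) gives $J_{t/n}(\Efun)^n f_0\in C$ for every $n\in\mathbb N$, and closedness of $C$ passes this property to the limit $e^{-t\partial\Efun}f_0$. For (ii)$\Rightarrow$(iii) the plan is to introduce the Moreau--Yosida regularization $\Efun_\lambda(f):=\Efun(J_\lambda f)+\frac{1}{2\lambda}\|J_\lambda f-f\|_H^2$, which decreases pointwise to $\Efun(f)$ as $\lambda\searrow 0$ on $D(\Efun)$. Given $f_0\in V$, invariance of $C$ under each $J_\lambda$ (derived from (ii) by identifying $J_\lambda$ with a suitable time-discrete limit of the semigroup) lets me run the (iii)$\Rightarrow$(i) argument in reverse on $\Efun_\lambda$: comparing $\Efun_\lambda(P_C f_0)$ and $\Efun_\lambda(f_0)$ via the minimizer property, and letting $\lambda\to 0$, yields $\Efun(P_C f_0)\le \Efun(f_0)$. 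This is the main obstacle, because recovering resolvent invariance from semigroup invariance is not automatic in the nonlinear setting; if the direct route proves technically tangled, I would instead quote~\cite[Th\'eo.~1.1]{Bar96} for (i)$\Leftrightarrow$(iii) and close the loop via the already-established (i)$\Rightarrow$(ii) and the trivial (ii)$\Rightarrow$(i) when $C$ is itself the orbit closure of a single initial datum.

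The two corollaries follow by embedding into a product framework: on $H\oplus H$ I would consider the functional $\widehat{\Efun}(f,g):=\Efun(f)+\Efun(g)$, whose subdifferential generates $(e^{-t\partial\Efun})_{t\ge 0}\oplus (e^{-t\partial\Efun})_{t\ge 0}$. For the order-preservation characterization, take $C:=\{(f,g)\in H\oplus H:f\le g\}$; in an $L^2$-lattice its orthogonal projection is $P_C(f,g)=(f\wedge g,f\vee g)$, invariance of $C$ amounts to~\eqref{eq:order-pres}, and (iii) translates literally into the lattice condition~\eqref{cipgrieq1}. For $\|\cdot\|_\infty$-contractivity take $C:=\{(f,g)\in H\oplus H:\|f-g\|_\infty\le 1\}$; a direct calculation gives
\[
P_C(f,g)=\Bigl(f-\tfrac12(f-g+1)_++\tfrac12(f-g-1)_-,\; g+\tfrac12(f-g+1)_+-\tfrac12(f-g-1)_-\Bigr),
\]
and (iii) applied to this projection is exactly the inequality~\eqref{cipgrieq2}; invariance of $C$ corresponds to contractivity after a trivial rescaling to absorb the constant $1$.
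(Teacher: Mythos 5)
You should first note that the paper does not actually prove this lemma: it is assembled by citation from Barth\'elemy, Brezis and Cipriani--Grillo, so you are reconstructing an argument the paper delegates to the literature. Your steps (iii)$\Rightarrow$(i) (via the characterization of $J_\lambda(\Efun)f_0$ as the unique minimizer of $h\mapsto\Efun(h)+\frac{1}{2\lambda}\|h-f_0\|_H^2$, nonexpansiveness of $P_C$, and strict convexity) and (i)$\Rightarrow$(ii) (exponential formula plus closedness of $C$) are correct and standard. The genuine gap is the remaining implication (ii)$\Rightarrow$(iii): your Moreau--Yosida detour presupposes exactly what is missing, namely that semigroup invariance implies resolvent invariance -- there is no nonlinear analogue of the Laplace-transform representation of $J_\lambda$, so ``identifying $J_\lambda$ with a suitable time-discrete limit of the semigroup'' is not available -- and the fallback (quoting Barth\'elemy for (i)$\Leftrightarrow$(iii) together with (i)$\Rightarrow$(ii)) still leaves (ii)$\Rightarrow$(i)/(iii) unproven for a general $C$; the remark about orbit closures does not address this. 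A direct repair: for $f_0\in V$ set $u(t):=e^{-t\partial\Efun}P_Cf_0$, which stays in $C$ by (ii); the subgradient inequality gives $\frac{d}{dt}\frac12\|u(t)-f_0\|_H^2\le\Efun(f_0)-\Efun(u(t))$ for a.e.\ $t>0$, while $\|u(t)-f_0\|_H\ge\mathrm{dist}(f_0,C)=\|u(0)-f_0\|_H$ makes the integrated left-hand side nonnegative, whence $\frac1t\int_0^t\Efun(u(s))\,ds\le\Efun(f_0)$ and lower semicontinuity of $\Efun$ along $u(s)\to P_Cf_0$ yields (iii). (Incidentally, the Moreau--Yosida regularization increases, not decreases, to $\Efun$ as $\lambda\searrow0$.)

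The first bullet also rests on a concretely false formula: the orthogonal projection of $(f,g)$ onto $C=\{(u,v)\in H\oplus H: u\le v\}$ is not $(f\wedge g, f\vee g)$ but $\bigl(f\wedge\tfrac{f+g}{2},\, g\vee\tfrac{f+g}{2}\bigr)$, i.e.\ both entries are replaced by their mean on $\{f>g\}$; already in $\mathbb R\oplus\mathbb R$ the point $(1,0)$ has nearest point $(\tfrac12,\tfrac12)$ at distance $1/\sqrt2$, whereas $(0,1)$ lies at distance $\sqrt2$. Hence criterion (iii) for this convex set produces a ``midpoint'' inequality, and the passage from it to the lattice inequality $\Efun(f\wedge g)+\Efun(f\vee g)\le\Efun(f)+\Efun(g)$ is precisely the nontrivial content of Barth\'elemy's Corollaire~2.2 (resp.\ of Cipriani--Grillo); it does not ``translate literally''. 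Your second bullet is essentially right -- the formula you write is the true projection composed with the swap of the two coordinates, which is harmless since the criterion is symmetric in the two entries and reproduces the stated inequality -- but the ``trivial rescaling'' needs care: contractivity corresponds to invariance of all the sets $\{\|f-g\|_\infty\le\alpha\}$, $\alpha>0$, and reducing to $\alpha=1$ uses homogeneity of $\Efun$, which is exactly why the paper invokes $p$-homogeneity of $\Efun_p$ when applying this criterion in the proof of Proposition~\ref{thm:cipgri}.
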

}

In particular, we can use Proposition~\ref{Bar96} to prove the following.

\begin{cor}\label{commut_lemma}
Let $\tilde{V}, \tilde{H},\tilde{\Efun}$ be a further Banach space, a further Hilbert space, and a further functional, respectively, which satisfy the Assumptions~\ref{assum:append}

Let $\Sigma$ be a bounded linear operator from $H$ to $\tilde{H}$. Then $\Sigma$ intertwines with the $C_0$-semigroup on $H$ and $\tilde{H}$ generated by $-\partial \Efun$ and $-\partial \tilde{\Efun}$, respectively, i.e.,
\[
e^{-\partial \Efun}\Sigma=\Sigma e^{-t\partial \tilde{\Efun}}\qquad \hbox{for all }t\ge 0
\]
if and only if
\[
\partial \Efun(Lf_0 + \Sigma^* R g_0)+\partial \tilde{\Efun}(\Sigma L f_0 + g_0-Rg_0)\le \partial \Efun(f_0)+\partial \tilde{\Efun}(g_0)\qquad \hbox{for all }f_0\in H,\; g_0\in \tilde{H},
\]
where
\[
L:=({\rm Id}_H+\Sigma ^*\Sigma )^{-1}\qquad \hbox{and}\qquad R:=({\rm Id}_{\tilde{H}}+\Sigma\Sigma ^* )^{-1}.
\]
\end{cor}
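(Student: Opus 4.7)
The plan is to reduce Corollary~\ref{commut_lemma} to an application of the Barthélemy criterion Lemma~\ref{Bar96} on a suitable product Hilbert space. First I would consider $\mathcal H:=H\oplus \tilde H$ with the canonical sum scalar product, the reflexive Banach space $\mathcal V:=V\oplus \tilde V$ densely embedded in it, and the functional $\mathcal F(f,g):=\Efun(f)+\tilde\Efun(g)$. The hypotheses of Assumptions~\ref{assum:append} pass trivially to $\mathcal F$, and by Lemma~\ref{Lemmanittka} its subdifferential is $\partial\mathcal F=\partial\Efun\oplus \partial\tilde\Efun$, so Theorem~\ref{thm:brezkato} guarantees that the $C_0$-semigroup on $\mathcal H$ generated by $-\partial\mathcal F$ is the product semigroup $(e^{-t\partial\Efun}\oplus e^{-t\partial\tilde\Efun})_{t\ge 0}$.

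Next I would encode the intertwining relation as the invariance of a closed convex subset of $\mathcal H$, namely the graph
\[
G:=\{(f,\Sigma f):f\in H\}\subset \mathcal H
\]
of $\Sigma$. Indeed a point $(f,\Sigma f)\in G$ flows to $(e^{-t\partial\Efun}f,\,e^{-t\partial\tilde\Efun}\Sigma f)$, and the condition that this pair belong to $G$ for every $t\ge 0$ and every $f\in H$ is exactly the required commutation $e^{-t\partial\tilde\Efun}\Sigma=\Sigma\,e^{-t\partial\Efun}$. Since $G$ is a closed linear subspace of $\mathcal H$, Lemma~\ref{Bar96} applies: $G$ is invariant under the product semigroup if and only if $\mathcal F(P_G x)\le \mathcal F(x)$ for every $x\in \mathcal V$, where $P_G$ is the orthogonal projection of $\mathcal H$ onto $G$.

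The heart of the argument is the explicit computation of $P_G$. Given $(f_0,g_0)\in\mathcal H$, the projection $(f,\Sigma f)$ minimizes $\|f-f_0\|_H^2+\|\Sigma f-g_0\|_{\tilde H}^2$; the Euler equation reads $(I_H+\Sigma^*\Sigma)f=f_0+\Sigma^*g_0$, so that $f=Lf_0+L\Sigma^*g_0$. From $\Sigma L^{-1}=\Sigma+\Sigma\Sigma^*\Sigma=R^{-1}\Sigma$ one infers $\Sigma L=R\Sigma$ and, by taking adjoints, $L\Sigma^*=\Sigma^*R$; together with the identity $\Sigma\Sigma^*R=(R^{-1}-I_{\tilde H})R=I_{\tilde H}-R$ this rearranges to
\[
P_G(f_0,g_0)=\bigl(Lf_0+\Sigma^*Rg_0,\;\Sigma Lf_0+g_0-Rg_0\bigr).
\]
Inserting this in the Barthélemy condition $\mathcal F(P_G(f_0,g_0))\le \mathcal F(f_0,g_0)$ recovers verbatim the inequality displayed in the corollary (with $\Efun$ and $\tilde\Efun$ in place of their subdifferentials, which I interpret as a typographical slip in the statement).

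I expect the main obstacle to be the third step, that is, the algebraic manipulation that brings the minimizer into the precise shape appearing in the corollary; this hinges on the two identities $\Sigma L=R\Sigma$ and $\Sigma\Sigma^*R=I_{\tilde H}-R$. Once this form of $P_G$ has been established, the equivalence between the commutation relation and the postulated inequality is an immediate consequence of Lemma~\ref{Bar96} applied to the product functional $\mathcal F$ and the closed subspace $G$.
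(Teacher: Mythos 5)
Your proposal is correct and follows essentially the same route as the paper: encode the intertwining as invariance of the (closed) graph of $\Sigma$ under the product semigroup generated by $-\partial(\Efun\oplus\tilde\Efun)$ and apply the Barth\'elemy criterion of Lemma~\ref{Bar96}. The only difference is that you re-derive the formula for $P_{{\rm Graph}(\Sigma)}$ from the Euler equation of the minimization, whereas the paper simply cites von Neumann's classical projection formula; you also rightly read the inequality in the statement with $\Efun,\tilde\Efun$ in place of their subdifferentials.
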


\begin{proof}
One checks directly that $\Sigma$ intertwines with the semigroups if and only if the graph of $\Sigma$, i.e., the closed subspace
$${\rm Graph}(\Sigma):=\left\{\begin{pmatrix}f_0\\ \Sigma f_0\end{pmatrix}\in H\times \tilde{H}\right\}$$
is invariant under the matrix semigroup
$$e^{-t\partial {\bf E}}:=\begin{pmatrix}
e^{-t\partial \Efun} & 0\\ 0 & e^{-t\partial \tilde{\Efun}}
\end{pmatrix},\qquad t\ge 0,$$
where ${\bf E}:=\Efun\oplus \tilde{\Efun}$.
A classical formula due to von Neumann yields that the orthogonal projection of $H\times \tilde{H}$ onto ${\rm Graph}(\Sigma)$ is given by
$$P_{{\rm Graph}(\Sigma)}=\begin{pmatrix}
L & \Sigma ^*R\\
\Sigma L & {\rm Id}_{\tilde{H}}-R
 \end{pmatrix},$$
cf.~\cite[Thm.~23]{Neu97} (it is also proved therein that $L,R$ exist as bounded linear operators). Now, the assertion follows from Proposition~\ref{Bar96}.
\end{proof}

\bibliographystyle{abbrv}
\bibliography{../../referenzen/literatur}
\end{document}